\pdfoutput=1
\documentclass[11pt]{article}

\usepackage[margin=1in]{geometry}
\usepackage{amsmath,amssymb,amsthm,mathtools}
\usepackage{graphicx}
\usepackage{enumitem}
\usepackage[colorlinks=true,linkcolor=blue,citecolor=blue,urlcolor=blue]{hyperref}
\usepackage{aliascnt}
\usepackage{cleveref}

\numberwithin{equation}{section}

\theoremstyle{plain}
\newtheorem{proposition}{Proposition}[section]
\newaliascnt{lemma}{proposition}
\newtheorem{lemma}[lemma]{Lemma}
\aliascntresetthe{lemma}
\theoremstyle{definition}
\newaliascnt{definition}{proposition}
\newtheorem{definition}[definition]{Definition}
\aliascntresetthe{definition}
\newaliascnt{remark}{proposition}
\newtheorem{remark}[remark]{Remark}
\aliascntresetthe{remark}
\theoremstyle{remark}
\crefname{remark}{Remark}{Remarks}
\crefname{definition}{Definition}{Definitions}
\crefname{proposition}{Proposition}{Propositions}
\crefname{lemma}{Lemma}{Lemmas}

\usepackage{tikz}
\usetikzlibrary{arrows.meta,positioning,calc,decorations.pathreplacing,decorations.markings}
\definecolor{upcol}{rgb}{0.10,0.28,0.80}
\definecolor{downcol}{rgb}{0.84,0.18,0.18}
\tikzset{
  uppath/.style={upcol,line width=1.1pt},
  downpath/.style={downcol,line width=1.1pt},
  gridln/.style={gray!45,line width=0.4pt},
  dgridln/.style={gray!55,line width=0.35pt,densely dotted},
  vtx/.style={circle,fill=black,inner sep=0pt,minimum size=2.0pt},
  ybeflow/.style={postaction={decorate},
    decoration={markings,mark=at position 0.55 with {\arrow{Stealth[length=2mm]}}}},
  ybeflowb/.style={postaction={decorate},
    decoration={markings,mark=at position 0.74 with {\arrow{Stealth[length=2mm]}}}},
}
\newcommand{\fvv}[1]{\begin{tikzpicture}[scale=0.7,baseline=0pt,>={Stealth[length=1.8mm]},rounded corners=1pt]
  \draw[gridln] (0,-0.6)--(0,0.6) (-0.6,0)--(0.6,0);
  #1
\end{tikzpicture}}
\newcommand{\hcross}[4]{\begin{tikzpicture}[scale=0.55,baseline=-2.5pt]
  \ifnum#1=1 \draw[uppath,-{Stealth[length=1.5mm]}] (-0.8,-0.8)--(-0.06,-0.06);
  \else \draw[dgridln] (-0.8,-0.8)--(0,0); \fi
  \ifnum#3=1 \draw[uppath,-{Stealth[length=1.5mm]}] (0,0)--(0.8,0.8);
  \else \draw[dgridln] (0,0)--(0.8,0.8); \fi
  \ifnum#2=1 \draw[downpath,-{Stealth[length=1.5mm]}] (-0.8,0.8)--(-0.06,0.06);
  \else \draw[dgridln] (-0.8,0.8)--(0,0); \fi
  \ifnum#4=1 \draw[downpath,-{Stealth[length=1.5mm]}] (0,0)--(0.8,-0.8);
  \else \draw[dgridln] (0,0)--(0.8,-0.8); \fi
  \node[vtx] at (0,0) {};
\end{tikzpicture}}
\newcommand{\yd}[3][black]{\begin{tikzpicture}[scale=#2]
  \foreach \w [count=\r] in {#3}{\foreach \c in {1,...,\w}{%
    \draw[#1,line width=0.28pt] (\c-1,-\r+1) rectangle (\c,-\r);}}
\end{tikzpicture}}

\title{From Yang--Baxter to Robinson--Schensted--Knuth}
\date{}
\author{Leonid Petrov}

\begin{document}
\maketitle

\begin{abstract}
	We explain how to derive the Robinson--Schensted--Knuth (RSK) correspondence,
	a fundamental bijection in algebraic combinatorics, from the Yang--Baxter equation.
	The Yang--Baxter equation arose in the study of quantum many-body systems and later became
	a cornerstone of the theory of solvable lattice models, particularly vertex models.
	In a vertex model, arrows occupy the edges of a grid, and each vertex
	carries a Boltzmann weight determined by the arrows on the four edges
	meeting at it. The weight of a configuration is the product of these local
	weights, and a partition function is the sum of the weights of all configurations
	with prescribed boundary conditions.
	For a vertex model whose partition functions are the Schur polynomials,
	the two sides of each instance of the Yang--Baxter equation admit exactly one
	weight-preserving matching of their summands. Carried across a grid,
	this forced matching is the classical RSK
	correspondence in the form of Fomin's growth diagrams. 
	In natural coordinates the local matching becomes the
	combinatorial three-dimensional $R$, a set-theoretic
	solution of the Zamolodchikov tetrahedron equation. Its
	periodic closure returns the combinatorial $R$-matrices
	of one-row crystals.
	
	The forced matching is special to the Schur weights. For the
	Hall--Littlewood and $q$-Whittaker deformations and their spin versions,
	at generic parameter values no deterministic matching works for all
	boundary data.
	Reading each instance of the Yang--Baxter equation probabilistically, we
	replace the matching by a coupling of the two sides --- a bijectivization,
	or probabilistic bijection --- and obtain
	Markov operators that
	transport probability measures attached to vertex models.
	Iterated over the grid, these
	operators produce randomized RSK-type dynamics and interacting particle
	systems, including $q$-PushTASEP and the stochastic six-vertex model.
	
	This survey grew out of lectures given at Institut Mittag-Leffler in
	the summer of 2026.
\end{abstract}

\medskip
\noindent\textbf{2020 Mathematics Subject Classification.}
Primary 82B23;
Secondary 05A19, 05E05, 05E10, 16T25, 60C05, 60K35.

\smallskip
\noindent\textbf{Keywords.}
Yang--Baxter equation, vertex models, Robinson--Schensted--Knuth correspondence,
bijectivization, symmetric functions, interacting particle systems.

\newpage
\setcounter{tocdepth}{2}
\tableofcontents
\setcounter{tocdepth}{4}

\newpage

\section{Introduction}
\label{sec:intro}

\subsection{Overview}
\label{subsec:intro_overview}

The Robinson--Schensted--Knuth (RSK) correspondence and
the Yang--Baxter equation (YBE) come from different
subjects: the first is a bijection of algebraic
combinatorics due to Robinson \cite{robinson1938representations},
Schensted \cite{Schensted1961}, and Knuth \cite{Knuth1970}, the second an
identity of exactly solvable lattice models, see
Baxter \cite{baxter2007exactly} and Reshetikhin
\cite{reshetikhin2010lectures}. These notes explain how the first follows
from the second (this derivation first appeared in
Mucciconi--Petrov \cite[\S5]{MucciconiPetrov2020}).
A \emph{vertex model} assigns a weight
to every vertex of a grid according to the states of the
four edges meeting there; the weight of a configuration
is the product of these local weights, and a
\emph{partition function} --- a generating function over
configurations --- is the sum of the weights of all
configurations with prescribed boundary conditions.
\Cref{fig:intro_model_ybe} (top) shows a configuration of
one such vertex model. Each row of the grid carries its
own variable, called its \emph{spectral parameter}. The
\emph{Yang--Baxter equation} of Yang \cite{YangSystem1967}
and Baxter \cite{Baxter1972} (\Cref{fig:intro_model_ybe},
bottom) equates the partition functions of two
three-vertex configurations that differ by the position
of a cross vertex. Between the two sides of the equation,
the two rows keep their spectral parameters but meet the
vertical line in the opposite order.
Originally, the YBE was discovered in the study of quantum
integrability, which
includes the six-vertex model (exactly solved by Lieb
\cite{Lieb1967SixVertex}), higher-spin models, the
quantum inverse scattering method and algebraic Bethe
ansatz of the Leningrad school (see Faddeev
\cite{Faddeev_Lectures}), and the quantum groups of
Drinfeld \cite{Drinfeld1987QuantumGroups} and Jimbo
\cite{Jimbo1985}, whose $R$-matrices produce the vertex
weights.

\medskip

We consider the YBE for the vertex model whose partition
functions are the \emph{Schur polynomials} (the ordinary
or, more generally, the skew ones), as in Macdonald
\cite{Macdonald1995} and Stanley \cite{Stanley1999}; see
\Cref{sec:schur_models}. The presentation of Schur
polynomials as free fermionic lattice partition functions
is classical, see Zinn-Justin
\cite{ZinnJustin20096Vertex}.\footnote{Its deformation (a
Schur polynomial times a deformed Weyl denominator) first
appeared in Tokuyama \cite{tokuyama1988generating} as a
generating function over strict Gelfand--Tsetlin
patterns, with classical-group analogues in Hamel--King
\cite{hamel2005uturn}. These identities were recast by
Brubaker--Bump--Friedberg \cite{brubaker2011schur} as
solvable six-vertex partition functions obeying the YBE,
connecting the subject to $p$-adic representation
theory.} For fixed boundary conditions,
each of the two sides of the YBE is a sum of monomials in
the two spectral parameters, and within each side these
monomials are pairwise distinct (see
\Cref{fig:intro_ybe_instance}). Therefore the two sides
admit exactly one weight-preserving matching of their
summands. Performed at every cell of a square grid, this
unique matching is the classical RSK correspondence in
Fomin's growth-diagram form.

\begin{figure}[ht]
\centering
\begin{tikzpicture}[scale=0.82,>={Stealth[length=1.8mm]},rounded corners=0.8pt,baseline={(current bounding box.center)}]
  \foreach \j in {1,...,6}{\draw[gridln] (0,\j)--(\j,\j);}
  \foreach \i in {0,...,5}{\draw[gridln] (\i,\i)--(\i,6.15);}
  \draw[downpath,line width=1.4pt] (0,0)--(6.2,6.2);
  \node[downcol,font=\scriptsize,rotate=45,anchor=north] at (4.6,4.3) {diagonal};
  \foreach \j in {1,...,5}{\node[left,font=\scriptsize] at (-0.12,\j) {$y_{\j}$};}
  \foreach \i in {1,...,4}{\node[font=\scriptsize,gray!80] at (\i,0.5) {$\i$};}
  \draw[uppath,->] (0,1)--(1,1)--(1,6.15);                                   
  \draw[uppath,->] (0,2)--(2,2)--(2,4)--(3,4)--(3,6.15);                     
  \draw[uppath,->] (0,3.1)--(1.1,3.1)--(1.1,4)--(2,4)--(2,6.15);
  \draw[uppath,->] (0,3)--(2.1,3)--(2.1,4.1)--(3.1,4.1)--(3.1,5)--(4,5)--(4,6.15);
  \draw[uppath,->] (0,5)--(1.1,5)--(1.1,6.15);
  \draw[uppath,line width=3.6pt,opacity=0.40] (0,-0.25)--(0,6.2);
  \node[font=\scriptsize] at (0,-0.55) {$\infty$};
  \node[font=\scriptsize] at (0,6.5) {$\infty$};
\end{tikzpicture}

\bigskip

\begin{tikzpicture}[scale=0.85,line cap=round,
    baseline={(current bounding box.center)},font=\small,
    every node/.style={inner sep=1.4pt}]
  \def\lw{2pt}
  \draw[uppath,line width=\lw,ybeflow]  (-1.2,-0.9)--(0.2,0);
  \draw[uppath,line width=\lw,ybeflowb] (0.2,0)--(1.6,0.9);
  \draw[uppath,line width=\lw,ybeflow]  (1.6,0.9)--(3.0,0.9);
  \draw[downpath,line width=\lw,ybeflow]  (-1.2,0.9)--(0.2,0);
  \draw[downpath,line width=\lw,ybeflowb] (0.2,0)--(1.6,-0.9);
  \draw[downpath,line width=\lw,ybeflow]  (1.6,-0.9)--(3.0,-0.9);
  \draw[downpath,line width=\lw,ybeflow] (1.6,-0.9)--(1.6,-2.0);
  \draw[downpath,line width=\lw,ybeflow] (1.6,0)--(1.6,-0.9);
  \draw[uppath,line width=\lw,ybeflow]   (1.6,0)--(1.6,0.9);
  \draw[uppath,line width=\lw,ybeflow]   (1.6,0.9)--(1.6,2.0);
  \node[circle,fill=black,inner sep=0pt,minimum size=3pt] at (0.2,0){};
  \node[circle,fill=black,inner sep=0pt,minimum size=3pt] at (1.6,0.9){};
  \node[circle,fill=black,inner sep=0pt,minimum size=3pt] at (1.6,-0.9){};
  \node[left]  at (-1.2,0.9){$i_1$};
  \node[left]  at (-1.2,-0.9){$i_2$};
  \node[above] at (1.6,2.0){$j_3$};
  \node[below] at (1.6,-2.0){$i_3$};
  \node[right] at (3.0,0.9){$j_2$};
  \node[right] at (3.0,-0.9){$j_1$};
  \node at (0.73,0.72){$k_2$};
  \node at (0.73,-0.72){$k_1$};
  \node[right] at (1.70,-0.04){$k_3$};
\end{tikzpicture}
\qquad$=$\qquad
\begin{tikzpicture}[scale=0.85,line cap=round,
    baseline={(current bounding box.center)},font=\small,
    every node/.style={inner sep=1.4pt}]
  \def\lw{2pt}
  \draw[uppath,line width=\lw,ybeflow]  (-1.2,-0.9)--(0,-0.9);
  \draw[uppath,line width=\lw,ybeflowb] (0,-0.9)--(1.5,0);
  \draw[uppath,line width=\lw,ybeflow]  (1.5,0)--(3.0,0.9);
  \draw[downpath,line width=\lw,ybeflow]  (-1.2,0.9)--(0,0.9);
  \draw[downpath,line width=\lw,ybeflowb] (0,0.9)--(1.5,0);
  \draw[downpath,line width=\lw,ybeflow]  (1.5,0)--(3.0,-0.9);
  \draw[uppath,line width=\lw,ybeflow]   (0,-2.0)--(0,-0.9);
  \draw[uppath,line width=\lw,ybeflow]   (0,-0.9)--(0,0);
  \draw[downpath,line width=\lw,ybeflow] (0,0.9)--(0,0);
  \draw[downpath,line width=\lw,ybeflow] (0,2.0)--(0,0.9);
  \node[circle,fill=black,inner sep=0pt,minimum size=3pt] at (0,0.9){};
  \node[circle,fill=black,inner sep=0pt,minimum size=3pt] at (0,-0.9){};
  \node[circle,fill=black,inner sep=0pt,minimum size=3pt] at (1.5,0){};
  \node[left]  at (-1.2,0.9){$i_1$};
  \node[left]  at (-1.2,-0.9){$i_2$};
  \node[above] at (0,2.0){$j_3$};
  \node[below] at (0,-2.0){$i_3$};
  \node[right] at (3.0,0.9){$j_2$};
  \node[right] at (3.0,-0.9){$j_1$};
  \node at (0.91,0.72){$k_1'$};
  \node at (0.91,-0.72){$k_2'$};
  \node[left] at (-0.24,0.0){$k_3'$};
\end{tikzpicture}
\caption{Top: a configuration of one of the vertex models of these notes
(\Cref{subsec:model_II}): paths enter on the left and travel up and to the
right, each row of the grid carries its spectral parameter $y_j$, and the
weight of the configuration is the product of the local vertex weights. Bottom:
the Yang--Baxter equation (\Cref{subsec:ybe_cauchy}): the number of arrows
along each of the six boundary edges $i_1,i_2,i_3,j_1,j_2,j_3$ is fixed, the
arrow counts $k_1,k_2,k_3$ and $k_1',k_2',k_3'$ on the internal edges are
summed over, and the two partition functions are equal.}
\label{fig:intro_model_ybe}
\end{figure}

\medskip

Uniqueness of the matching is special to the Schur weights. For a general
YBE the summands of the two sides are not matched term by term; one
couples the two sides instead by a pair of Markov transitions, a construction
called \emph{bijectivization}, introduced by
Bufetov--Petrov \cite{BufetovPetrovYB2017}, which we take up in
\Cref{sec:bijectivization}. The deterministic matching just described is its
degenerate case, the one in which all transition probabilities are $0$ or $1$.

\medskip

\textbf{Why rederive RSK from the Yang--Baxter equation?}
The YBE is a local identity that makes a vertex model solvable.
It may seem surprising that the global RSK bijection
is its direct consequence.
Moreover, unlike the bijection itself, the
derivation from the YBE
admits interesting deformations. 
We pay special attention to the
$q$- and $t$-deformations (the $q$-Whittaker and
Hall--Littlewood weights of \Cref{sec:qdef}):
on the YBE side the deformed weights look very similar to the Schur ones,
but on the RSK side, where the deterministic matching is lost, they lead to
\emph{randomized} RSK-type insertions and interacting particle systems.

\medskip

We assume basic familiarity with RSK and Schur symmetric polynomials. No familiarity with
vertex models or quantum integrability is needed: we define in the text every
notion from that side that our arguments use, while a few standard terms
appear only as references to the literature. This survey grew out of overview
lectures given at the conference ``Solvable lattice models, representation theory of
quantum groups and algebraic combinatorics'' at Institut Mittag-Leffler, Djursholm,
Sweden, July 27--31, 2026.

\subsection{A worked example}
\label{subsec:intro_worked_example}

RSK is a bijection between matrices with nonnegative
integer entries and pairs $(P,Q)$ of semistandard Young
tableaux of the same shape, see Knuth \cite{Knuth1970},
Fulton \cite{fulton1997young}, and Stanley \cite{Stanley1999}. It reads, for example,
\begin{equation*}
  \begin{pmatrix} 1&1&0\\ 0&2&0\\ 1&0&1 \end{pmatrix}
  \;\longleftrightarrow\;
  P=\begin{tikzpicture}[scale=0.42,baseline={(current bounding box.center)}]
    \foreach \rr/\ww in {1/4,2/1,3/1}{\foreach \cc in {1,...,\ww}{\draw (\cc-1,-\rr+1) rectangle (\cc,-\rr);}}
    \foreach \cc/\val in {1/1,2/1,3/2,4/2}{\node at (\cc-0.5,-0.5) {\small$\val$};}
    \node at (0.5,-1.5) {\small$2$};
    \node at (0.5,-2.5) {\small$3$};
  \end{tikzpicture}\,,
  \qquad
  Q=\begin{tikzpicture}[scale=0.42,baseline={(current bounding box.center)}]
    \foreach \rr/\ww in {1/4,2/1,3/1}{\foreach \cc in {1,...,\ww}{\draw (\cc-1,-\rr+1) rectangle (\cc,-\rr);}}
    \foreach \cc/\val in {1/1,2/1,3/2,4/3}{\node at (\cc-0.5,-0.5) {\small$\val$};}
    \node at (0.5,-1.5) {\small$2$};
    \node at (0.5,-2.5) {\small$3$};
  \end{tikzpicture}\,.
\end{equation*}
Recall Fomin's growth-diagram form of RSK \cite{Fomin1986},
\cite{fomin1995schensted}: the
vertices of an $N\times N$ grid are labeled by
partitions, the labels at adjacent vertices interlace
(that is, differ by a \emph{horizontal strip}, the part
of a semistandard tableau occupied by a given entry; written
$\varkappa\prec\lambda$), the left and bottom boundaries
carry the empty partitions, and each unit cell carries a
nonnegative integer, the corresponding entry of the input
matrix. 
The two output tableaux are read off the top and
the right boundary of the grid.
See \Cref{fig:intro_growth_example} for an example
involving the same $3\times3$ matrix as above.
Expository accounts of growth diagrams include
Roby \cite{Roby91ThesisRSK}, Krattenthaler \cite{krattenthaler2006growth},
and Appendix~1 (by Fomin) to Chapter~7 of Stanley \cite{Stanley1999}.

Let us examine a single cell. It looks as follows:
\begin{equation*}
\begin{tikzpicture}[scale=1.5,baseline={(current bounding box.center)},font=\small,
    pp/.style={fill=white,inner sep=1pt,font=\scriptsize}]
  \draw[gridln] (0,0) rectangle (1,1);
  \node[gray!75,font=\scriptsize] at (0.5,0.5) {$k$};
  \node[pp] at (0,1) {$\lambda$};
  \node[pp] at (1,1) {$\nu$};
  \node[pp] at (0,0) {$\varkappa$};
  \node[pp] at (1,0) {$\mu$};
\end{tikzpicture}
\end{equation*}
The bottom-left, top-left, and bottom-right corners carry
partitions which are already known from the previous cells. 
The entry $k$ inside the cell is the corresponding entry of the input matrix.
The top-right partition $\nu$ is uniquely determined from this data
via
\begin{equation}
  \label{eq:intro_toggle}
  \nu_1 = k+\max(\lambda_1,\mu_1),
  \qquad
  \nu_{c+1} = \max(\lambda_{c+1},\mu_{c+1})+\min(\lambda_c,\mu_c)-\varkappa_c
  \quad(c\ge1).
\end{equation}
With $\lambda$ and $\mu$ held fixed,
\eqref{eq:intro_toggle} is a bijection between the pairs
$(\varkappa,k)$ with $\varkappa\prec\lambda$,
$\varkappa\prec\mu$ and $k\in\mathbb{Z}_{\ge0}$, and the
partitions $\nu$ with $\nu\succ\lambda$, $\nu\succ\mu$.
Identity \eqref{eq:intro_toggle} is the local rule of the RSK
which is sometimes referred to as the \emph{toggle rule}, 
cf.~Hopkins \cite{hopkins2014rsk}.
We return to it in \Cref{subsec:rsk_toggles}.

\begin{figure}[ht]
\centering
\begin{tikzpicture}[pp/.style={fill=white,inner sep=1pt,font=\scriptsize},
    yy/.style={fill=white,inner sep=1pt},
    corner/.style={font=\tiny,gray!50!black}]
  \fill[gray!12] (1.37,1.37) rectangle (2.38,2.38);
  \foreach \a in {0,1,2,3}{\draw[gridln] ({1.25*\a},0)--({1.25*\a},3.75);}
  \foreach \b in {0,1,2,3}{\draw[gridln] (0,{1.25*\b})--(3.75,{1.25*\b});}
  \node[gray!75,font=\scriptsize] at (0.625,0.625) {$1$};
  \node[gray!75,font=\scriptsize] at (1.875,0.625) {$0$};
  \node[gray!75,font=\scriptsize] at (3.125,0.625) {$1$};
  \node[gray!75,font=\scriptsize] at (0.625,1.875) {$0$};
  \node[gray!75,font=\scriptsize] at (1.875,1.875) {$2$};
  \node[gray!75,font=\scriptsize] at (3.125,1.875) {$0$};
  \node[gray!75,font=\scriptsize] at (0.625,3.125) {$1$};
  \node[gray!75,font=\scriptsize] at (1.875,3.125) {$1$};
  \node[gray!75,font=\scriptsize] at (3.125,3.125) {$0$};
  \node[corner] at (1.48,1.44) {$\varkappa$};
  \node[corner] at (1.48,2.31) {$\lambda$};
  \node[corner] at (2.27,1.44) {$\mu$};
  \node[corner] at (2.27,2.31) {$\nu$};
  \foreach \p in {(0,0),(1.25,0),(2.5,0),(3.75,0),(0,1.25),(0,2.5),(0,3.75)}
    {\node[pp] at \p {$\varnothing$};}
  \node[yy] at (1.25,1.25) {\yd{0.15}{1}};
  \node[yy] at (2.5,1.25) {\yd{0.15}{1}};
  \node[yy] at (3.75,1.25) {\yd[downcol]{0.15}{2}};
  \node[yy] at (1.25,2.5) {\yd{0.15}{1}};
  \node[yy] at (2.5,2.5) {\yd{0.15}{3}};
  \node[yy] at (3.75,2.5) {\yd[downcol]{0.15}{3,1}};
  \node[yy] at (1.25,3.75) {\yd[upcol]{0.15}{2}};
  \node[yy] at (2.5,3.75) {\yd[upcol]{0.15}{4,1}};
  \node[yy] at (3.75,3.75) {\yd{0.15}{4,1,1}};
  \node[upcol,font=\small] at (1.875,4.2) {$P$};
  \node[downcol,font=\small] at (4.45,1.875) {$Q$};
\end{tikzpicture}
\caption{The growth diagram of a $3\times3$ input matrix. Each vertex carries a
partition, drawn as its Young diagram; the left and bottom boundaries are
empty; the top and right boundary chains encode the output tableaux $P$ and
$Q$. In the shaded cell the corners are $\varkappa=(1)$,
$\lambda=(1)$, $\mu=(1)$ in the local rule \eqref{eq:intro_toggle} with $k=2$,
which yields $\nu=(3)$.}
\label{fig:intro_growth_example}
\end{figure}

Let us take an example. To better illustrate the matching, we choose
$\lambda$ and $\mu$ bigger than the partitions in
\Cref{fig:intro_growth_example}: say $\lambda=(7,3)$ and $\mu=(6,4)$, with
the entry $k=2$ inside the cell. To keep the lists short, we restrict the
bottom-left corner to a single row, $\varkappa=(\varkappa_1)$. The
interlacing $\varkappa\prec\lambda$ means $7\ge\varkappa_1\ge3$, and
$\varkappa\prec\mu$ means $6\ge\varkappa_1\ge4$; together they leave
exactly three admissible corners, $\varkappa_1\in\{4,5,6\}$. For each of
them the rule \eqref{eq:intro_toggle} yields $\nu_1=2+\max(7,6)=9$,
$\nu_2=\max(3,4)+\min(7,6)-\varkappa_1=10-\varkappa_1$, and
$\nu_3=0+\min(3,4)-0=3$; that is, $\nu=(9,\,10-\varkappa_1,\,3)$. 
In the
opposite direction, a top-right corner $\nu$ with $\nu_1=9$ and $\nu_3=3$
interlaces with both $\lambda$ and $\mu$ exactly when $4\le\nu_2\le6$.
This yields three admissible top-right corners with $\nu_2\in\{4,5,6\}$, 
as it should be by the toggle bijection 
$(\varkappa,k)\mapsto\nu$.

\medskip

In the simplified example, one degree of freedom remains on each
side of the toggle bijection: the input corner varies
through $\varkappa_1\in\{4,5,6\}$, and the output
partition through $\nu_2\in\{4,5,6\}$. As we explain in
these notes, one such degree of freedom corresponds to a
single instance of the YBE. Generically, the full map
$(\varkappa,k)\mapsto\nu$ would correspond to a whole train of
Yang--Baxter moves, which we explain in \Cref{subsec:schur_yb}.

To write down
the YBE for our example, we need spectral
parameters $x,y$ (which are exchanged by the YBE).
This instance of the YBE reads
\begin{equation}
  \label{eq:intro_two_sides}
  \underbrace{x^{4}y^{3}+x^{5}y^{4}+x^{6}y^{5}}_{\varkappa_1=6,\;5,\;4}
  \;=\;
  \underbrace{x^{4}y^{3}+x^{5}y^{4}+x^{6}y^{5}}_{\nu_2=4,\;5,\;6}\,,
\end{equation}
with one summand for each admissible $\varkappa_1$ on the
left, and one for each admissible $\nu_2$ on the right.
\Cref{fig:intro_ybe_instance} draws all six
configurations of this instance. 
The boundary occupations are the
same in all six pictures and are fixed by the data of the
example: $\nu_1-\mu_1=3$ and $\nu_1-\lambda_1=2$ arrows
enter on the left, $\lambda_2-\varkappa_2=3$ and
$\mu_2-\varkappa_2=4$ leave on the right, and the
vertical line carries $\lambda_1-\lambda_2=4$ arrows at
the bottom and $\mu_1-\mu_2=2$ at the top.
Only the three
internal occupations vary. The vertex weights behind
these monomials are written out in
\Cref{subsec:model_II,subsec:ybe_cauchy}, see
\eqref{eq:upright_cross}, \eqref{eq:downright_cross}, and
\eqref{eq:bbR}; the caption of
\Cref{fig:intro_ybe_instance} briefly describes them in words.

\begin{figure}[ht]
\centering
\newcommand{\bnd}[5][uppath]{%
  \ifnum#2>0
  \foreach \s in {1,...,#2}{%
    \pgfmathsetmacro{\off}{(\s-(#2+1)/2)*0.09}
    \draw[#1,line width=0.8pt,->] ($#3+\off*#5$)--($#4+\off*#5$);}%
  \fi}
\newcommand{\bndto}[5][uppath]{%
  \ifnum#2>0
  \foreach \s in {1,...,#2}{%
    \pgfmathsetmacro{\off}{(\s-(#2+1)/2)*0.09}
    \draw[#1,line width=0.8pt,->,shorten >=5pt] ($#3+\off*#5$)--($#4+\off*#5$);}%
  \fi}
\newcommand{\iYBEL}[3]{%
\begin{tikzpicture}[scale=0.72,line cap=round,>={Stealth[length=1.2mm]},
    baseline={(current bounding box.center)},
    skel/.style={gray!55,densely dotted,line width=0.5pt},
    vx/.style={circle,draw=black,fill=white,line width=0.7pt,inner sep=0pt,minimum size=9pt}]
  \draw[skel] (-1.2,-0.9)--(0.2,0)--(1.6,0.9)--(3.0,0.9);
  \draw[skel] (-1.2,0.9)--(0.2,0)--(1.6,-0.9)--(3.0,-0.9);
  \draw[skel] (1.6,-2.0)--(1.6,2.0);
  \bndto[uppath]{2}{(-1.2,-0.9)}{(0.2,0)}{(0,1)}
  \bndto[downpath]{3}{(-1.2,0.9)}{(0.2,0)}{(0,1)}
  \bndto[uppath]{#2}{(0.2,0)}{(1.6,0.9)}{(0,1)}
  \bndto[downpath]{#1}{(0.2,0)}{(1.6,-0.9)}{(0,1)}
  \bnd[uppath]{4}{(1.6,0.9)}{(3.0,0.9)}{(0,1)}
  \bnd[downpath]{3}{(1.6,-0.9)}{(3.0,-0.9)}{(0,1)}
  \bnd[uppath]{2}{(1.6,0.9)}{(1.6,2.0)}{(1,0)}
  \bnd[downpath]{4}{(1.6,-0.9)}{(1.6,-2.0)}{(1,0)}
  \bndto[uppath]{#3}{(1.6,0)}{(1.6,0.9)}{(1,0)}
  \bndto[downpath]{#3}{(1.6,0)}{(1.6,-0.9)}{(1,0)}
  \node[vx] at (0.2,0){};
  \node[vx] at (1.6,0.9){};
  \node[vx] at (1.6,-0.9){};
  \node[upcol,font=\scriptsize,left]   at (-1.25,-0.9){$x$};
  \node[downcol,font=\scriptsize,left] at (-1.25,0.9){$y$};
\end{tikzpicture}}
\newcommand{\iYBER}[3]{%
\begin{tikzpicture}[scale=0.72,line cap=round,>={Stealth[length=1.2mm]},
    baseline={(current bounding box.center)},
    skel/.style={gray!55,densely dotted,line width=0.5pt},
    vx/.style={circle,draw=black,fill=white,line width=0.7pt,inner sep=0pt,minimum size=9pt}]
  \draw[skel] (-1.2,-0.9)--(0,-0.9)--(1.5,0)--(3.0,0.9);
  \draw[skel] (-1.2,0.9)--(0,0.9)--(1.5,0)--(3.0,-0.9);
  \draw[skel] (0,-2.0)--(0,2.0);
  \bndto[uppath]{2}{(-1.2,-0.9)}{(0,-0.9)}{(0,1)}
  \bndto[downpath]{3}{(-1.2,0.9)}{(0,0.9)}{(0,1)}
  \bndto[uppath]{#2}{(0,-0.9)}{(1.5,0)}{(0,1)}
  \bndto[downpath]{#1}{(0,0.9)}{(1.5,0)}{(0,1)}
  \bnd[uppath]{4}{(1.5,0)}{(3.0,0.9)}{(0,1)}
  \bnd[downpath]{3}{(1.5,0)}{(3.0,-0.9)}{(0,1)}
  \bndto[downpath]{2}{(0,2.0)}{(0,0.9)}{(1,0)}
  \bndto[uppath]{4}{(0,-2.0)}{(0,-0.9)}{(1,0)}
  \bnd[uppath]{#3}{(0,-0.9)}{(0,0)}{(1,0)}
  \bnd[downpath]{#3}{(0,0.9)}{(0,0)}{(1,0)}
  \node[vx] at (0,0.9){};
  \node[vx] at (0,-0.9){};
  \node[vx] at (1.5,0){};
  \node[upcol,font=\scriptsize,left]   at (-1.25,-0.9){$x$};
  \node[downcol,font=\scriptsize,left] at (-1.25,0.9){$y$};
\end{tikzpicture}}
\setlength{\tabcolsep}{9pt}
\makebox[\textwidth][l]{%
\begin{tabular}{@{}ccccc@{}}
\iYBEL{1}{0}{6} & ${}+{}$ & \iYBEL{2}{1}{5} & ${}+{}$ & \iYBEL{3}{2}{4}\\[4pt]
{\footnotesize$x^{4}y^{3}$} & & {\footnotesize$x^{5}y^{4}$} & &
{\footnotesize$x^{6}y^{5}$}
\end{tabular}}

\vspace{3.5mm}

\makebox[\textwidth][r]{%
\begin{tabular}{@{}c@{\hspace{6pt}}ccccc@{}}
${}={}$ & \iYBER{0}{1}{5} & ${}+{}$ & \iYBER{1}{2}{4} & ${}+{}$ &
\iYBER{2}{3}{3}\\[4pt]
& {\footnotesize$x^{4}y^{3}$} & & {\footnotesize$x^{5}y^{4}$} & &
{\footnotesize$x^{6}y^{5}$}
\end{tabular}}
\caption{All six configurations of the YBE instance
\eqref{eq:intro_two_sides}; below each configuration is
its monomial weight. The blue line carries the spectral
parameter $x$ and the red line $y$. 
The weight of a configuration is the product of
its three vertex weights: each of the two vertices on the
vertical line contributes the parameter of its line ($x$
or $y$) raised to the number of arrows going out to the
right, and the cross vertex contributes $xy$ raised to
the minimum of its two outgoing arrow counts.}
\label{fig:intro_ybe_instance}
\end{figure}

As an identity between polynomials, \eqref{eq:intro_two_sides} is a triviality.
However, as each summand corresponds 
to a choice of $\varkappa_1$ or $\nu_2$ 
on the left or right, matching these summands term by term
produces our toggle bijection:
\begin{equation*}
  \varkappa_1=6\ \longleftrightarrow\ \nu_2=4,
  \qquad
  \varkappa_1=5\ \longleftrightarrow\ \nu_2=5,
  \qquad
	\varkappa_1=4\ \longleftrightarrow\ \nu_2=6.
\end{equation*}
This is the same matching as in \eqref{eq:intro_toggle},
but now it is seen as a consequence of the YBE from 
\Cref{fig:intro_model_ybe}.
The general statement of this bijeciton is
\Cref{prop:train_is_rsk}.

\subsection{The combinatorial 3D $R$ as a set-theoretic tetrahedron map}
\label{subsec:intro_3dR}

The matching in the example
in \Cref{subsec:intro_worked_example}
moved a single part ($\varkappa_1$ or $\nu_2$)
because the fixed corners $\lambda$ and $\mu$ froze
everything else. Let us extract the map that the matching
performs in general, when the boundary data may vary as
well. Given the four occupations $i_1,i_2,i_3,j_3$,
which are the same on the two sides
(\Cref{fig:intro_model_ybe}, bottom), a summand on the
left-hand side is determined by the two outgoing
horizontal occupations $j_1$ and $j_2$ together with the
smaller of the two internal ones, $h=\min(k_1,k_2)$. We
record these as the triple $(a,h,c)=(j_1,h,j_2)$.
On the right,
a summand is determined by the two internal
occupations $k_1'$ and $k_2'$ together with 
$m=\min(j_1,j_2)$.
We record these as the triple $(a',m,c')=(k_1',m,k_2')$.

In these coordinates the unique weight-preserving
matching 
between the two sides of the YBE
reads
\begin{equation}
  \label{eq:intro_3d_R}
  (a,h,c)\longmapsto
	\bigl(h+\max(a-c,0),\;\min(a,c),\;h+\max(c-a,0)\bigr).
\end{equation}
For the middle terms in the example in \Cref{fig:intro_ybe_instance},
this gives
$(3,1,4)\mapsto(1+0,\;3,\;1+1)=(1,3,2)$.
This map is the \emph{combinatorial three-dimensional
$R$} of Kuniba--Okado \cite{KunibaOkado2012Tetrahedron3D}
and Kuniba \cite{Kuniba2016CombinatorialYBmaps},
which solves the set-theoretic Zamolodchikov tetrahedron
equation \cite{Zamolodchikov1981Tetrahedron}.
We discuss
this in more detail in
\Cref{sec:3dR}.

Note that the map \eqref{eq:intro_3d_R} is not a
set-theoretic Yang--Baxter map in the usual sense: it
acts on triples and not on pairs, and the tetrahedron
equation it satisfies lives one dimension above the
Yang--Baxter equation. Genuine set-theoretic Yang--Baxter maps do
come out of \eqref{eq:intro_3d_R}, for instance after its
third coordinate is closed into a periodic loop, which
turns it into the combinatorial $R$-matrices of
one-row crystals which also power the 
box-ball system of Takahashi--Satsuma
\cite{TakahashiSatsuma1990}; see also the survey by
Inoue--Kuniba--Takagi \cite{InoueKunibaTakagi2012}.
Keeping the middle coordinate open instead, as in these
notes, turns \eqref{eq:intro_3d_R} into Fomin's growth
rule and RSK. Thus, the box-ball system and RSK are two
global readings of one local map
(\Cref{subsec:two_geometries}). For a direct
combinatorial link between 
the box-ball system and RSK,
see Fukuda \cite{fukuda2004box}.

\subsection{From Yang--Baxter to interacting particle systems}
\label{subsec:intro_literature}

The second half of these notes
deforms the constructions we outlined
in \Cref{subsec:intro_worked_example}.
At the level of symmetric functions, the Schur polynomials 
are replaced either by the Hall--Littlewood polynomials
or by the $q$-Whittaker polynomials, which
are two different one-parameter specializations of the Macdonald
polynomials \cite{Macdonald1995}.
In the Yang--Baxter equations for the corresponding deformed
weights, the two sides no longer admit a term-by-term
bijective matching.
Instead, one can
produce a \emph{randomized} matching
(a \emph{bijectivization} of the YBE),
which is a conditional distribution (or a Markov 
transition kernel) assigning a random term
from the right-hand side to each term of the left-hand side
(or vice versa, a random term from the left-hand side to
each term of the right-hand side).

For the instance of the YBE from
\Cref{fig:intro_ybe_instance} with the $q$-Whittaker
deformed weights, at $q=\frac12$, the six
summands become proportional to
\begin{multline*}
  \underbrace{651}_{\varkappa_1=6}
  +\underbrace{\bigl(217+868\,xy\bigr)}_{\varkappa_1=5}
  +\underbrace{\bigl(4+168\,xy+448\,(xy)^2\bigr)}_{\varkappa_1=4}
  \\=\underbrace{620}_{\nu_2=4}
  +\underbrace{\bigl(245+840\,xy\bigr)}_{\nu_2=5}
	+\underbrace{\bigl(7+196\,xy+448\,(xy)^2\bigr)}_{\nu_2=6}.
\end{multline*}
The deformed summands for general $q$ are written out in
\Cref{sec:def_rsk}. The two sides are equal as
polynomials in $xy$, but no summand on the left equals a
summand on the right. The
independent coupling, which chooses a term on the right
with probability proportional to its weight, sends
$\varkappa_1=5$ to $\nu_2=4,\,5,\,6$ with probabilities
proportional to the three summands on the right. At
$x=y=\frac12$, these probabilities are
$\frac{620}{1159},\,\frac{455}{1159},\,\frac{84}{1159}$,
respectively.

\medskip

The resulting
randomized RSKs
obtained from bijectivizations of the YBE
are related to
known integrable
interacting particle systems. 
Let us
survey the literature on these constructions.
The totally asymmetric simple exclusion process (TASEP),
the simplest example of an integrable interacting particle system,
was introduced in biology by MacDonald--Gibbs--Pipkin
\cite{macdonald1968bioASEP} and in probability by
Spitzer \cite{Spitzer1970} (see also Liggett
\cite{Liggett1975} for the general theory of interacting
particle systems, and Rost \cite{Rost1981} for the limit
shape of TASEP started from the step initial configuration). 
Schur functions and RSK found applications
to particle systems around 2000
starting from the work of Johansson \cite{johansson2000shape},
though they entered probability much earlier:
Vershik--Kerov \cite{VershikKerov_LimShape1077} and
Logan--Shepp \cite{logan_shepp1977variational} found the
limit shape of the Plancherel random partition already
in 1977.

Schur measures and processes, introduced by
Okounkov \cite{okounkov2001infinite} and
Okounkov--Reshetikhin \cite{okounkov2003correlation}, are
probability distributions on partitions (or sequences
of partitions) whose probability
weights are expressed through Schur
polynomials. 
Through RSK, 
Schur measures are connected to TASEP, last-passage
percolation, and longest increasing subsequences of
random permutations, 
see Baik--Deift--Johansson
\cite{baik1999distribution},
Borodin--Okounkov--Olshanski
\cite{Borodin2000b},
and Johansson
\cite{johansson2000shape}.
Borodin and Ferrari
\cite{BorFerr2008DF} constructed two-dimensional growth
dynamics on Schur processes. These dynamics contain TASEP
as a one-dimensional marginal.

Following Borodin--Ferrari, other symmetric functions
from the Macdonald hierarchy \cite{Macdonald1995} give
rise to deformed particle systems such as the $q$-TASEP
pioneered by Borodin--Corwin
\cite{BorodinCorwin2011Macdonald}. Randomized RSK-type
dynamics for these deformations predate Yang--Baxter
bijectivizations: such dynamics were constructed by
O'Connell--Pei \cite{OConnellPei2012} and Borodin--Petrov
\cite{BorodinPetrov2013NN}, and then by Bufetov--Petrov
\cite{BufetovPetrov2014} in the Hall--Littlewood case and
Matveev--Petrov \cite{MatveevPetrov2014} in the full
$q$-Whittaker case (including discrete variants of
$q$-TASEP and $q$-PushTASEP), all without using the YBE.

Stochastic vertex models entered probability in Borodin
\cite{Borodin2014vertex}, Corwin--Petrov
\cite{CorwinPetrov2015}, and Borodin--Petrov
\cite{BorodinPetrov2016inhom}. Bijectivization, first
used in Bufetov--Petrov \cite{BufetovPetrovYB2017}, turns
the YBE itself into local stochastic moves. It was
developed further in \cite{BufetovMucciconiPetrov2018},
\cite{MucciconiPetrov2020},
\cite{PetrovSaenz2019backTASEP},
\cite{petrov2022rewriting},
\cite{nicoletti2022irreversible}. The Yang--Baxter
bijectivization serves as a bridge between vertex models
and interacting particle systems: directly, through
scalar marginals, or through degenerations, it connects
to $q$-TASEP and $q$-PushTASEP, to the stochastic
six-vertex model and ASEP \cite{BCG6V},
\cite{BorodinBufetovWheeler2016},
\cite{BufetovMatveev2017},
and to directed
random polymers 
\cite{Seppalainen2012},
\cite{OConnellYor2001}, \cite{CorwinBarraquand2015Beta}.
The symmetric functions
play a secondary role, and the YBE side can even produce
new families, such as the spin $q$-Whittaker polynomials
\cite{BorodinWheelerSpinq}, \cite{MucciconiPetrov2020},
\cite{korotkikh2024representation}. See
\cite{Corwin2014ICM} and \cite{borodin2019shift} for
charts of hierarchies of integrable models. In these
notes, \Cref{sec:bijectivization} presents the general
bijectivization, and \Cref{sec:def_rsk} sketches 
some of the
resulting particle systems.

\medskip

Let us also mention that 
in rigorous
two-dimensional statistical mechanics, the random star--triangle transformation 
(itself a bijectivization of the YBE) underlies the proof of
rotational invariance of the critical Ising, FK, and six-vertex models in
Duminil-Copin--Kozlowski--Krachun--Manolescu--Oulamara
\cite{DuminilCopinKozlowskiKrachunManolescuOulamara2020}.

\subsection{Outline}
\label{subsec:intro_outline}

The notes are organized as follows.
\Cref{sec:schur_models} presents the three vertex models
whose partition functions are the Schur polynomials,
together with the YBE and the Cauchy identity. In
\Cref{sec:rsk}, we match the two sides of the YBE term by
term and identify this matching with RSK in Fomin's
growth-diagram form. \Cref{sec:3dR} recasts the same
matching in the coordinates of \eqref{eq:intro_3d_R} as
the combinatorial three-dimensional $R$ satisfying the
tetrahedron equation, and discusses the periodic closure
and the one-row crystals powering the box-ball system. In
\Cref{sec:bijectivization}, we describe the general
bijectivization procedure, of which the matching of
\Cref{sec:rsk} is the deterministic case. \Cref{sec:qdef}
treats the $t$- and $q$-deformations of the Schur vertex
models (Hall--Littlewood and $q$-Whittaker), related by
Macdonald duality and fusion. \Cref{sec:def_rsk} sketches
how the deformed weights, combined with bijectivization,
yield integrable stochastic interacting particle systems.
\Cref{sec:outlook} discusses further directions.

A reader who wants only the derivation of RSK from the
Yang--Baxter equation may read the Introduction and then
\Cref{sec:schur_models,sec:rsk}, and stop.
\Cref{sec:3dR} adds the three-dimensional point of view
connected to the set-theoretic tetrahedron equation.
The sections after it form the probabilistic half of
the notes powered by $q$- and $t$- deformations,
where the matching of the two sides
of the YBE
is no
longer deterministic.

\subsection{Acknowledgements}

I am grateful to Alexei Borodin, Ben Brubaker, Persi Diaconis, Christian Korff,
Matteo Mucciconi, and Greta Panova for helpful suggestions.
This material is based upon work supported by the Swedish Research Council under grant
no.~2021-06594 while the author was in residence at Institut Mittag-Leffler in Djursholm,
Sweden during the conference
``Solvable lattice models, representation theory of quantum groups and algebraic
combinatorics'' at Institut Mittag-Leffler, July 2026.
I am grateful to the organizers of that conference,
and to its participants
for their enthusiastic questions.

\smallskip\noindent

I was partially supported by the NSF grant DMS-2153869 and the Simons
Collaboration Grant for Mathematicians 709055.

\medskip\noindent
\textbf{AI Statement.}
Claude Code (Opus~4.8-5, Fable~5--5.1) helped draft portions
of the exposition, generate several of the \texttt{TikZ} figures, and carry out
coding and numerical consistency checks. All definitions, statements, and proofs
were verified by the author and edited for presentation.
The author bears sole responsibility for their correctness.

\section{Vertex models for Schur polynomials}
\label{sec:schur_models}

\subsection{Conventions}
\label{subsec:conventions}

A \emph{signature} of length $N$ is a weakly decreasing integer vector
$\lambda=(\lambda_1\ge\cdots\ge\lambda_N)$, $\lambda_i\in\mathbb{Z}$; the set of these is
$\mathrm{Sign}_N$. A \emph{partition} is a signature with $\lambda_N\ge0$, identified with
its Young diagram (the left-justified array of $\lambda_i$ boxes in row $i$, drawn
top to bottom in the English convention); $|\lambda|=\sum_i\lambda_i$ is its size,
$\ell(\lambda)$ its number of nonzero parts, and $s_\lambda(x_1,\dots,x_n)$ is the
Schur polynomial. We write $\mathbb{Z}_{\ge0}=\{0,1,2,\dots\}$. The \emph{conjugate} (or
transpose) partition $\lambda'$ is the reflection of the diagram across the main
diagonal,
\begin{equation*}
  \lambda'_i\;=\;\#\{j:\lambda_j\ge i\}\qquad(\text{the height of the $i$th column of }\lambda),
\end{equation*}
so that $(\lambda')'=\lambda$ and $|\lambda'|=|\lambda|$. Our running example is
\begin{equation*}
  \lambda=(4,4,1),\qquad \lambda'=(3,2,2,2),
\end{equation*}
displayed in \Cref{fig:young}.

\medskip

\emph{Strips and interlacing.} Two partitions $\mu\subseteq\lambda$ (diagram
inclusion) differ by a \emph{horizontal strip} if their skew shape $\lambda/\mu$
has at most one box in each \emph{column}, and by a \emph{vertical strip} if
$\lambda/\mu$ has at most one box in each \emph{row}. These two conditions are
exchanged by conjugation, and each is encoded by an interlacing relation on
signatures. We write $\mu\prec\lambda$ (equivalently $\lambda\succ\mu$) for the
\emph{horizontal} interlacing
\begin{equation*}
  \lambda_1\ \ge\ \mu_1\ \ge\ \lambda_2\ \ge\ \mu_2\ \ge\ \cdots,
\end{equation*}
which for partitions says exactly that $\lambda/\mu$ is a horizontal strip. The
\emph{transposed} (vertical) interlacing is its conjugate,
\begin{equation*}
  \mu\prec'\lambda \quad\stackrel{\text{def}}{\Longleftrightarrow}\quad \mu'\prec\lambda',
  \qquad\text{i.e.}\qquad
  \lambda'_1\ \ge\ \mu'_1\ \ge\ \lambda'_2\ \ge\ \mu'_2\ \ge\ \cdots.
\end{equation*}
See
\Cref{fig:strips} for an illustration.

\begin{figure}[ht]
\centering
\begin{tikzpicture}[scale=0.42,baseline=-1.6cm]
  \foreach \rr/\ww in {1/4,2/4,3/1}{%
    \foreach \cc in {1,...,\ww}{\draw (\cc-1,-\rr+1) rectangle (\cc,-\rr);}}
  \node at (2,-3.6) {$\lambda=(4,4,1)$};
\end{tikzpicture}
\hspace{1.1cm}
\begin{tikzpicture}[scale=0.42,baseline=-1.6cm]
  \foreach \rr/\ww in {1/3,2/2,3/2,4/2}{%
    \foreach \cc in {1,...,\ww}{\draw (\cc-1,-\rr+1) rectangle (\cc,-\rr);}}
  \node at (1.6,-4.6) {$\lambda'=(3,2,2,2)$};
\end{tikzpicture}
\hspace{1.1cm}
\begin{tikzpicture}[scale=0.42,baseline=-1.6cm]
  \foreach \rr/\ww in {1/4,2/4,3/1}{%
    \foreach \cc in {1,...,\ww}{\draw (\cc-1,-\rr+1) rectangle (\cc,-\rr);}}
  \foreach \cc/\val in {1/1,2/1,3/2,4/2}{\node at (\cc-0.5,-0.5) {\small$\val$};}
  \foreach \cc/\val in {1/2,2/3,3/3,4/3}{\node at (\cc-0.5,-1.5) {\small$\val$};}
  \node at (0.5,-2.5) {\small$3$};
  \node at (2,-3.6) {SSYT $T$};
\end{tikzpicture}
\caption{The partition
$\lambda=(4,4,1)$ (left), its conjugate $\lambda'=(3,2,2,2)$ obtained by reflecting
across the main diagonal (middle), and a semistandard Young tableau $T$ of shape
$\lambda$ with entries in $\{1,\dots,n\}$, $n=3$ (right): rows weakly increase
left to right, columns strictly increase top to bottom. The cells filled with a
given entry $k$ form a horizontal strip (at most one per column), so $T$ records
the horizontal-strip interlacing sequence
$\varnothing\prec(2)\prec(4,1)\prec(4,4,1)$. The weight of $T$ is
$x^{\mathrm{wt}(T)}=x_1^{2}x_2^{3}x_3^{4}$, and $s_\lambda=\sum_T x^{\mathrm{wt}(T)}$
is the sum of weights
over all semistandard tableaux.}
\label{fig:young}
\end{figure}

\begin{figure}[ht]
\centering
\begin{tikzpicture}[scale=0.42,baseline=-1.0cm]
  \foreach \rr/\ww in {1/4,2/2}{%
    \foreach \cc in {1,...,\ww}{\draw (\cc-1,-\rr+1) rectangle (\cc,-\rr);}}
  \foreach \rr/\cc in {1/3,1/4,2/2}{\fill[upcol!22] (\cc-1,-\rr+1) rectangle (\cc,-\rr);}
  \draw (2,0) rectangle (3,-1); \draw (3,0) rectangle (4,-1); \draw (1,-1) rectangle (2,-2);
  \node at (2,-2.7) {\small horizontal strip $(4,2)/(2,1)$};
\end{tikzpicture}
\hspace{1.6cm}
\begin{tikzpicture}[scale=0.42,baseline=-1.0cm]
  \foreach \rr/\ww in {1/2,2/2,3/1}{%
    \foreach \cc in {1,...,\ww}{\draw (\cc-1,-\rr+1) rectangle (\cc,-\rr);}}
  \foreach \rr/\cc in {1/2,2/2,3/1}{\fill[downcol!22] (\cc-1,-\rr+1) rectangle (\cc,-\rr);}
  \draw (1,0) rectangle (2,-1); \draw (1,-1) rectangle (2,-2); \draw (0,-2) rectangle (1,-3);
  \node at (1,-3.7) {\small vertical strip $(2,2,1)/(1,1)$};
\end{tikzpicture}
\caption{Left: a horizontal strip $\mu\prec\lambda$ (shaded), at most one box per
column. Right: a vertical strip $\mu\prec'\lambda$ (shaded), at most one box per
row. Conjugation exchanges the two.}
\label{fig:strips}
\end{figure}

\medskip

\emph{Weights.} Vertex weights are taken \emph{nonnegative} --- and that is all
the Yang--Baxter bijectivization of \Cref{sec:bijectivization} will require of
them. A model is moreover called \emph{stochastic} when its weights, read as
conditional probabilities of the outgoing edges given the incoming ones, sum to
one, so that the row-to-row transfer matrix is a Markov operator; this is a convenient extra structure used in connection
with interacting particle systems.
Both are claims about real parameters: whenever a statement below is probabilistic, we work
in the regime $0\le q<1$, $0\le t<1$, and $x_i,y_j\ge0$ with $x_iy_j<1$. The identities
themselves need no such restriction, holding as identities of rational functions or of
formal series.

\subsection{Schur vertex model I: Horizontal strips}
\label{subsec:model_I}

A solvable lattice model assigns Boltzmann weights to the local configurations on the
square grid, with paths (arrows) conserved at each vertex, and the weight of a whole
configuration is the product of its vertex weights.
The most classical instance is the \emph{free fermion five-vertex model}, whose
partition function is a Schur polynomial. This identity is classical
\cite{ZinnJustin20096Vertex}: in the equivalent phase model language
(cf.\ \Cref{rem:higher_spin} below) it appears in \cite{tsilevich2006quantum},
and as a six-vertex partition function governed by the YBE
it was developed in \cite{brubaker2011schur}. We describe the model concretely.

\medskip

Fix $n$ and work on the grid $\mathbb{Z}_{\ge1}\times\{1,\dots,n\}$: $n$ horizontal rows
indexed from the bottom by $i=1,\dots,n$, and columns indexed by a half-line
coordinate. Each edge carries an occupation in $\{0,1\}$, and we encode a local vertex by
its four edge occupations
\begin{equation}
  \label{eq:vertex_args}
  (i_1,j_1;\,i_2,j_2)
  \;=\;
  (\text{bottom in},\ \text{left in};\ \text{top out},\ \text{right out}),
  \qquad i_1,j_1,i_2,j_2\in\{0,1\}.
\end{equation}
Path conservation forces the weight to vanish unless $i_1+j_1=i_2+j_2$, leaving
six admissible vertices; the \emph{five-vertex} condition discards the
doubly-occupied vertex $(1,1;1,1)$. The remaining five vertices, and the
Boltzmann weights attached to a vertex sitting in row $i$, are shown in
\Cref{fig:fivevertex_weights}.

\begin{figure}[ht]
\centering
\begin{tikzpicture}[scale=0.9,>={Stealth[length=1.8mm]}]
  \draw[gray!45,line width=0.4pt] (0,-0.7)--(0,0.7) (-0.7,0)--(0.7,0);
  \draw[-{Stealth[length=1.5mm]},gray!75] (0,-0.7)--(0,-0.18);
  \draw[-{Stealth[length=1.5mm]},gray!75] (-0.7,0)--(-0.18,0);
  \draw[-{Stealth[length=1.5mm]},gray!75] (0,0.18)--(0,0.7);
  \draw[-{Stealth[length=1.5mm]},gray!75] (0.18,0)--(0.7,0);
  \node[below] at (0,-0.7) {$i_1$};
  \node[left]  at (-0.7,0) {$j_1$};
  \node[above] at (0,0.7) {$i_2$};
  \node[right] at (0.7,0) {$j_2$};
\end{tikzpicture}
\par\medskip
\begin{tabular}{cccccc}
\fvv{} &
\fvv{\draw[upcol,line width=1pt,->] (0,-0.6)--(0,0.6);} &
\fvv{\draw[upcol,line width=1pt,->] (-0.6,0)--(0.6,0);} &
\fvv{\draw[upcol,line width=1pt,->] (0,-0.6)--(0,0)--(0.6,0);} &
\fvv{\draw[upcol,line width=1pt,->] (-0.6,0)--(0,0)--(0,0.6);} &
\fvv{\draw[upcol,line width=1pt] (0,-0.6)--(0,0.6); \draw[upcol,line width=1pt] (-0.6,0)--(0.6,0);} \\[3pt]
$(0,0;0,0)$ & $(1,0;1,0)$ & $(0,1;0,1)$ & $(1,0;0,1)$ & $(0,1;1,0)$ & $(1,1;1,1)$ \\[2pt]
$x_i$ & $1$ & $1$ & $1$ & $1$ & $0$ \\
\end{tabular}
\caption{Top: the edge-occupation convention of \eqref{eq:vertex_args} --- a vertex
in row $i$ with vertical-in $i_1$ (bottom), horizontal-in $j_1$ (left),
vertical-out $i_2$ (top), horizontal-out $j_2$ (right); paths run up and to the
right. Bottom: the five admissible vertices with their
Boltzmann weights in row $i$.}
\label{fig:fivevertex_weights}
\end{figure}

\begin{remark}[Free fermion condition]
\label{rmk:free_fermion}
The six Boltzmann weights (the five admissible ones together with the discarded doubly-occupied vertex of weight~$0$) satisfy the \emph{free fermion condition}
\begin{equation}
\label{eq:free_fermion}
w\!\left(\vcenter{\hbox{\fvv{}}}\right)
\cdot
w\!\left(\vcenter{\hbox{\fvv{\draw[upcol,line width=1pt] (0,-0.6)--(0,0.6);\draw[upcol,line width=1pt] (-0.6,0)--(0.6,0);}}}\right)
+
w\!\left(\vcenter{\hbox{\fvv{\draw[upcol,line width=1pt,->] (0,-0.6)--(0,0)--(0.6,0);}}}\right)
\cdot
w\!\left(\vcenter{\hbox{\fvv{\draw[upcol,line width=1pt,->] (-0.6,0)--(0,0)--(0,0.6);}}}\right)
=
w\!\left(\vcenter{\hbox{\fvv{\draw[upcol,line width=1pt,->] (0,-0.6)--(0,0.6);}}}\right)
\cdot
w\!\left(\vcenter{\hbox{\fvv{\draw[upcol,line width=1pt,->] (-0.6,0)--(0.6,0);}}}\right)\;.
\end{equation}
\end{remark}

\medskip

A partition $\lambda=(\lambda_1\ge\cdots\ge\lambda_N\ge0)$, padded by zeros to a fixed
length $N$, enters as boundary data
through the ``parts $+$ staircase'' point set
\begin{equation}
  \label{eq:S_lambda}
  \mathcal S(\lambda)=\{\lambda_j+N-j+1\}_{1\le j\le N}\subset\mathbb{Z}_{\ge1},
\end{equation}
a set of $N$ distinct occupied sites, the listed values strictly decreasing in $j$.
A signature with a negative part would leave the half-line $\mathbb{Z}_{\ge1}$ on which the
columns are indexed, and is excluded here. For the partition function $Z_\lambda$ we
take a single block of $n$ rows on which $\lambda$ has $N=n$ parts, feed in $n$
paths from the bottom at the positions $\mathcal S(\lambda)$, require them to exit
through the right boundary at infinity (each path eventually travels horizontally,
so all but finitely many vertices are weight-$1$ pass-throughs), and keep the top
and left boundaries empty. Summing
the product of weights from \Cref{fig:fivevertex_weights} over all admissible
five-vertex configurations with this boundary, by definition, is the
partition function $Z_\lambda(x_1,\dots,x_n)$.

\begin{proposition}[Schur polynomial as a five-vertex partition function]
\label{prop:schur_partition_function}
We have
\begin{equation*}
  s_\lambda(x_1,\dots,x_n) \;=\; Z_\lambda(x_1,\dots,x_n).
\end{equation*}
\end{proposition}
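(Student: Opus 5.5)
The plan is to decompose $Z_\lambda$ row by row and match it with the branching rule $s_\lambda(x_1,\dots,x_n)=\sum_{\mu\prec\lambda} x_n^{|\lambda|-|\mu|}\,s_\mu(x_1,\dots,x_{n-1})$. The first step is to sort out the geometry of a configuration. Paths enter from the bottom of row $1$ at the sites $\mathcal S(\lambda)$, travel up and right, and leave through the right boundary at infinity, while the top and left boundaries are empty. Hence each path must terminate horizontally in some row. Count the vertical occupation: below row $1$ there are $n$ paths, and above row $n$ there are none. Since each row can absorb at most one path into the right boundary (there is a single horizontal line per row going out to infinity), exactly one path exits to the right in each row. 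Consequently the vertical edges between rows $i$ and $i+1$ carry exactly $n-i$ occupied sites. Encode these as $\mathcal S(\mu^{(i)})$ for a partition $\mu^{(i)}$ with $n-i$ parts, using the staircase shift with $N=n-i$.

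The key step is a single-row lemma. Take a row with spectral parameter $x$, incoming occupied sites $\mathcal S(\lambda)$ with $N$ parts from below, and outgoing sites $\mathcal S(\mu)$ with $N-1$ parts on top. The left boundary is empty and one path exits at infinity. The claim is that the row partition function equals $x^{|\lambda|-|\mu|}$ if $\mu\prec\lambda$, and $0$ otherwise.

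The argument for the lemma goes as follows. Because the $(1,1;1,1)$ vertex is forbidden and horizontal edges carry at most one path, a horizontal segment starting at an incoming site $a$ must turn up at the next site $b$. At $b$ either an incoming path arrives, in which case that path is forced to continue horizontally, or it does not, in which case the path may turn up there. Iterating this shows the row configuration is uniquely determined by the boundary data, and it exists precisely when the sites interlace: $s_1>t_1\ge s_2>t_2\ge\cdots\ge s_N$, where the $s$'s are the incoming and the $t$'s the outgoing sites, with the largest incoming path escaping to infinity. Subtracting the staircase, $s_j=\lambda_j+N-j+1$ and $t_j=\mu_j+(N-1)-j+1$, this chain becomes exactly $\lambda_j\ge\mu_j\ge\lambda_{j+1}$, i.e.\ $\mu\prec\lambda$.

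For the weight, only empty vertices $(0,0;0,0)$ contribute, each giving a factor $x$. Within a finite window, the empty vertices are those with no vertical path and no horizontal path through them. I will compute their number as (number of sites in the window) minus (vertical occupations) minus (horizontal occupied edges), telescoping over a large window. This should give $\sum_j s_j-\sum_j t_j$ minus a constant, and after removing the staircase shifts the result is $|\lambda|-|\mu|$. This bookkeeping is the main obstacle: the staircase offsets for lengths $N$ and $N-1$ differ, and they must be shown to cancel exactly, including the contribution of the path that escapes to infinity. I would verify the count on a small example before writing the general telescoping argument.

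Finally, multiply the row weights. Summing over the chains $\varnothing=\mu^{(n)}\prec\mu^{(n-1)}\prec\cdots\prec\mu^{(0)}=\lambda$ gives $\sum\prod_i x_i^{|\mu^{(i-1)}|-|\mu^{(i)}|}$. Since such chains correspond exactly to semistandard tableaux of shape $\lambda$, this sum is the tableau formula for $s_\lambda$. Equivalently, one can induct on $n$ using the branching rule above, with the base case $n=0$ (or the empty partition) where $Z=1=s_\varnothing$.
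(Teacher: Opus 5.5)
Your route is the paper's: each row of the lattice peels off one horizontal strip, so configurations are interlacing chains, i.e.\ semistandard tableaux. Your single-row statement is correct: there is a unique configuration iff $s_1>t_1\ge s_2>\cdots>t_{N-1}\ge s_N$, i.e.\ $\mu\prec\lambda$, and its weight is $x^{|\lambda|-|\mu|}$. The local mechanism you give for it, however, is wrong.

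A horizontal segment need not turn up at the next column; it can cross empty columns through $(0,1;0,1)$. It can also never reach a column where a path enters from below. There $i_1=j_1=1$, so the vertex is $(1,1;1,1)$, of weight $0$: nothing ``continues horizontally'', the configuration simply dies. The rule you describe is the doubly occupied vertex in disguise, and it would permit $t_j=s_j$, contradicting the strict inequalities you then assert. The correct argument is this. The path entering at $s_{j+1}$ either goes straight up ($t_j=s_{j+1}$) or turns right and must turn up strictly before column $s_j$. Only the path from $s_1$ can reach infinity.

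The same exclusion settles the weight count you left open. Every non-empty vertex has exactly one incoming arrow, so among columns $1,\dots,L$ (with $L\ge s_1$) there are $L-N-\sum_{x<L}h_x$ empty vertices. Here $h_x$ is the occupation of the edge between columns $x$ and $x+1$, and ``vertical occupations'' in your count must mean only the $N$ incoming vertical edges. Since $\sum_{x<L}h_x=\sum_{j=1}^{N-1}(t_j-s_{j+1})+(L-s_1)$, the count equals $\sum_j s_j-\sum_j t_j-N=|\lambda|-|\mu|$, because the two staircases differ by exactly $N$.

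You also skip the variable reversal. The chain starts from $\varnothing$ at the top, so the strip $\mu^{(i-1)}/\mu^{(i)}$ added by row $i$ carries the entry $n+1-i$ in the tableau, not $i$. The sum over chains is therefore $s_\lambda(x_n,\dots,x_1)$, and you need the symmetry of $s_\lambda$ to conclude, as the paper notes explicitly. For instance, the configuration in \Cref{fig:5v_model} has weight $x_1^4x_2^3x_3^2$, while the corresponding tableau $T$ has weight $x_1^2x_2^3x_3^4$. Likewise, in your inductive variant, removing the bottom row gives $Z_\lambda=\sum_{\mu\prec\lambda}x_1^{|\lambda|-|\mu|}Z_\mu(x_2,\dots,x_n)$. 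That is branching in $x_1$ rather than $x_n$, which again requires symmetry.
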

\begin{proof}[Sketch of proof]
	One can use a straightforward bijection between five-vertex configurations and semistandard Young tableaux
	which preserves the weights $x^{\mathrm{wt}(T)}$ up to reversing the variables
	$x_i\mapsto x_{n+1-i}$ (the reversal does not affect the partition function,
	which is symmetric in the $x_i$'s).
	See \Cref{fig:5v_model} for an illustration
	of a vertex model configuration corresponding to the
	semistandard Young tableau from \Cref{fig:young}.
	Note that each row of the vertex model corresponds to a horizontal strip in the tableau.
\end{proof}

\begin{figure}[ht]
\centering
\begin{tikzpicture}[scale=0.62,>={Stealth[length=1.8mm]},rounded corners=1.2pt]
  \foreach \c in {1,...,7}{\draw[gridln] (\c,0.45)--(\c,3.55);}
  \foreach \r in {1,2,3}{\draw[gridln] (0.55,\r)--(7.45,\r);}
  \foreach \c in {1,...,7}{\node[gray!75,font=\scriptsize] at (\c,0.05) {$\c$};}
  \node[left] at (0.55,1) {$x_1$};
  \node[left] at (0.55,2) {$x_2$};
  \node[left] at (0.55,3) {$x_3$};
  \draw[uppath,->] (7,0.45)--(7,1)--(9,1);
  \draw[uppath,->] (6,0.45)--(6,2)--(9,2);
  \draw[uppath,->] (2,0.45)--(2,2)--(3,2)--(3,3)--(9,3);
\end{tikzpicture}
\caption{The five-vertex configuration corresponding to the semistandard tableau
$T$ of \Cref{fig:young} of shape $\lambda=(4,4,1)$ with $n=3$.}
\label{fig:5v_model}
\end{figure}

\begin{remark}
\label{rmk:skew}
	Fix an additional partition $\mu$ with $N$ parts and take $\lambda$ with $N+n$ parts.
	Reading the paths in a block of $n$ rows
	between the bottom boundary $\mathcal S(\lambda)$ and the top boundary
	$\mathcal S(\mu)$, with the left boundary empty and one path exiting through the right
	of each row, yields the skew Schur polynomial $s_{\lambda/\mu}$.
	The length relation is forced by the boundary flux: each of the $n$ rows releases
	exactly one path to the right, so the bottom signature carries $n$ coordinates more
	than the top one.
\end{remark}

\begin{remark}[Higher spin version]
\label{rem:higher_spin}
The free fermion five-vertex model described above
can be bijectively mapped to a free fermion instance of the higher spin six-vertex model
studied in \cite{Borodin2014vertex}, \cite{BorodinPetrov2016inhom}.
The allowed configurations in this model have $i_1,i_2\in \mathbb{Z}_{\ge0}$
and $j_1,j_2\in \left\{ 0,1 \right\}$. The vertex weights (in the Schur case)
in the $i$-th row are given by
\begin{equation*}
	w(i_1,j_1;i_2,j_2)=
	\mathbf 1_{i_1+j_1=i_2+j_2}\,x_i^{j_2}.
\end{equation*}
The configuration which bijectively corresponds to the one in \Cref{fig:5v_model} is
shown in \Cref{fig:5v_model_higher_spin}. Now the top boundary
is given by the multiset $\left\{ \lambda_1,\lambda_2,\ldots,\lambda_n  \right\}$,
and the paths enter everywhere from the left boundary, while the bottom and the right boundaries
are empty.
In contrast with the five-vertex model, the columns are now indexed by $\mathbb{Z}_{\ge0}$
(zero parts of $\lambda$ occupy column~$0$); this indexing is needed for the weights
$x_i^{j_2}$ to produce the correct total power of $x_i$ in each row.
\end{remark}

\begin{figure}[ht]
\centering
\begin{minipage}[c]{0.54\textwidth}
\centering
{\small
\begin{tabular}{cccc}
\fvv{%
\draw[upcol,line width=1pt,->] (-0.1,-0.6)--(-0.1,0.6);
\draw[upcol,line width=1pt,->] (0,-0.6)--(0,0.6);
\draw[upcol,line width=1pt,->] (0.1,-0.6)--(0.1,0.6);} &
\fvv{%
\draw[upcol,line width=1pt,->] (-0.1,-0.6)--(-0.1,0.6);
\draw[upcol,line width=1pt,->] (0,-0.6)--(0,0.6);
\draw[upcol,line width=1pt,->] (0.1,-0.6)--(0.1,0)--(0.6,0);} &
\fvv{%
\draw[upcol,line width=1pt,->] (-0.1,-0.6)--(-0.1,0.6);
\draw[upcol,line width=1pt,->] (0,-0.6)--(0,0.6);
\draw[upcol,line width=1pt,->] (0.1,-0.6)--(0.1,0.6);
\draw[upcol,line width=1pt,->] (-0.6,0)--(-0.2,0)--(-0.2,0.6);} &
\fvv{%
\draw[upcol,line width=1pt,->] (-0.1,-0.6)--(-0.1,0.6);
\draw[upcol,line width=1pt,->] (0,-0.6)--(0,0.6);
\draw[upcol,line width=1pt,->] (0.1,-0.6)--(0.1,0.6);
\draw[upcol,line width=1pt,->] (-0.6,0)--(0.6,0);} \\[3pt]
$(3,0;3,0)$ & $(3,0;2,1)$ & $(3,1;4,0)$ & $(3,1;3,1)$ \\
$w=1$ & $w=x_i$ & $w=1$ & $w=x_i$ \\
\end{tabular}
}
\end{minipage}%
\hfill
\begin{minipage}[c]{0.44\textwidth}
\centering
\begin{tikzpicture}[scale=0.62,>={Stealth[length=1.8mm]},rounded corners=1.2pt,baseline=0pt]
  \def\e{0.11}
  \foreach \c in {0,...,4}{\draw[gridln] (\c,0.5)--(\c,3.9);}
  \foreach \r in {1,2,3}{\draw[gridln] (-0.6,\r)--(4.5,\r);}
  \foreach \c in {0,...,4}{\node[gray!75,font=\scriptsize] at (\c,4.35) {$\c$};}
  \node[left] at (-0.7,1) {$x_1$};
  \node[left] at (-0.7,2) {$x_2$};
  \node[left] at (-0.7,3) {$x_3$};
  \draw[uppath,->] (-0.6,1)--(2,1)--(2,2)--(4+\e,2)--(4+\e,4.05);
  \draw[uppath,->] (-0.6,2)--(1,2)--(1,3)--(4-\e,3)--(4-\e,4.05);
  \draw[uppath,->] (-0.6,3)--(1,3)--(1,4.05);
\end{tikzpicture}
\end{minipage}
\caption{Left: the four vertex types $(j_1,j_2)\in\{0,1\}^2$, labelled
$(i_1,j_1;i_2,j_2)$, with weights $w=x_i^{j_2}$. The vertical occupation
$i_1,i_2\ge0$ is arbitrary, subject to $i_1+j_1=i_2+j_2$; here we draw the
representatives with $i_1=3$.
Right: the higher-spin six-vertex configuration corresponding to the tableau $T$
(equivalently, to the five-vertex configuration of \Cref{fig:5v_model});
the columns are indexed from~$0$.}
\label{fig:5v_model_higher_spin}
\end{figure}

\begin{remark}[The free fermion six-vertex model]
\label{rem:ff6v}
	Allowing the vertex $(1,1;1,1)$ turns the five-vertex model into the
	\emph{six-vertex model}. Keeping the free fermion condition \eqref{eq:free_fermion},
	one can define free fermion Schur functions
	\cite{ABPW2021free},
	\cite{Naprienko2023}, which generalize
	both horizontal- and vertical-strip Young tableaux
	(the vertical strip setting is detailed in \Cref{subsec:schur_vertical} below).
	This family of functions includes as
	particular cases the factorial and shifted Schur polynomials, as well as the Frobenius--Schur functions.
\end{remark}

\subsection{Skew Young diagrams and vertex model rows}
\label{subsec:skew_rows}

Drop the condition $j_1,j_2\le1$ on the horizontal edges of the higher-spin model
(\Cref{rem:higher_spin}), keeping $i_1,i_2\in\mathbb{Z}_{\ge0}$
and the arrow conservation $i_1+j_1=i_2+j_2$ at each vertex.
Consider a single row of vertices on this more general vertex model.
We read configurations of vertical arrows as multisets, and
interpret them as partitions $\lambda$ (at the top) and $\mu$ (at the bottom).
In particular, the vertical occupation $i_2$ at the top of column $x$ is the multiplicity
$m_x(\lambda)=\#\{i:\lambda_i=x\}$.
Writing $h_x$ for the
occupation of the horizontal edge entering column $x$ from the left, we have
\begin{equation*}
  h_x \;=\; \sum_{y\ge x}\bigl(m_y(\lambda)-m_y(\mu)\bigr) \;=\; \lambda'_x-\mu'_x.
\end{equation*}
The horizontal edges are thus
nonnegative if and only if
$\lambda'_x\ge\mu'_x$ for all $x$, which is equivalent to the Young
diagram inclusion $\mu\subseteq\lambda$.
\Cref{fig:skew_row} illustrates this dictionary.

\begin{figure}[ht]
\centering
\begin{tikzpicture}[scale=0.8,>={Stealth[length=1.6mm]},rounded corners=1.0pt,baseline={(current bounding box.center)}]
  \foreach \c in {1,...,7}{\draw[gridln] (\c,-0.72)--(\c,0.72);}
  \draw[gridln] (0.35,0)--(7.5,0);
  \foreach \c in {1,...,7}{\node[gray!70,font=\scriptsize] at (\c,-1.02){$\c$};}
  \node[font=\scriptsize] at (7.95,0.42){$\lambda$};
  \node[font=\scriptsize] at (7.95,-0.42){$\mu$};
  \draw[uppath,->] (0.35,0.1)--(2,0.1)--(2,0.72);                 
  \draw[uppath,->] (0.35,0)--(2.9,0)--(2.9,0.72);              
  \draw[uppath,->] (1,-0.72)--(1,-0.1)--(3.1,-0.1)--(3.1,0.72);  
  \draw[uppath,->] (4,-0.72)--(4,0.72);                        
  \draw[uppath,->] (4.2,-0.72)--(4.2,0)--(7,0)--(7,0.72);      
\end{tikzpicture}
\hspace{1.0cm}
\begin{tikzpicture}[scale=0.42,baseline={(current bounding box.center)}]
  \fill[gray!18] (0,0) rectangle (4,-2);
  \fill[gray!18] (0,-2) rectangle (1,-3);
  \fill[upcol!22] (4,0) rectangle (7,-1);
  \fill[upcol!22] (1,-2) rectangle (3,-3);
  \fill[upcol!22] (0,-3) rectangle (3,-4);
  \fill[upcol!22] (0,-4) rectangle (2,-5);
  \foreach \rr/\ww in {1/7,2/4,3/3,4/3,5/2}{\foreach \cc in {1,...,\ww}{\draw (\cc-1,-\rr+1) rectangle (\cc,-\rr);}}
\end{tikzpicture}
\caption{The dictionary between general vertex models and skew Young diagrams.}
\label{fig:skew_row}
\end{figure}

Restricting $\lambda/\mu$ to horizontal strips recovers
the higher-spin model of \Cref{rem:higher_spin}. Restricting to
vertical strips, we obtain another combinatorial condition on the allowed vertex types:
\begin{lemma}
\label{lemma:vertical_strip_condition}
For a vertex row configuration
corresponding to a skew Young diagram $\lambda/\mu$,
the inequality $j_2\le i_1$ holds at every
vertex if and only if $\lambda/\mu$ is a vertical strip.
\end{lemma}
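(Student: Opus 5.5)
We translate the vertex inequality into a statement about conjugate partitions and then use the dictionary of \Cref{subsec:skew_rows}. At column $x$ the bottom vertical occupation is $i_1=m_x(\mu)$ and the outgoing horizontal occupation is $j_2=h_{x+1}$, the edge entering column $x+1$. By the formula above, $h_{x+1}=\lambda'_{x+1}-\mu'_{x+1}$. We also use $m_x(\mu)=\mu'_x-\mu'_{x+1}$ for $x\ge1$. The column-$0$ case, $m_0(\mu)$, needs separate care and is handled below.

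For $x\ge1$ the inequality $j_2\le i_1$ therefore reads
\begin{equation*}
  \lambda'_{x+1}-\mu'_{x+1}\;\le\;\mu'_x-\mu'_{x+1},
  \qquad\text{that is,}\qquad
  \lambda'_{x+1}\le\mu'_x .
\end{equation*}
Together with $\mu\subseteq\lambda$, which holds automatically because all horizontal occupations are nonnegative, this family of inequalities over all $x\ge1$ is exactly the vertical interlacing
\begin{equation*}
  \lambda'_1\ge\mu'_1\ge\lambda'_2\ge\mu'_2\ge\cdots,
\end{equation*}
i.e.\ $\mu\prec'\lambda$. By the definition in \Cref{subsec:conventions}, this says $\lambda/\mu$ is a vertical strip. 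Reading the equivalences in both directions gives both implications.

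The remaining step is the boundary column $x=0$, and I expect it to be the only delicate point. I would fix the convention that $\lambda$ and $\mu$ have the same number $N$ of parts, zeros included, so that $h_0=0$ and no arrows enter from the far left. Then $\lambda'_0=\mu'_0=N$, and $m_0(\mu)=N-\mu'_1$. The condition at $x=0$ becomes $\lambda'_1-\mu'_1\le N-\mu'_1$, i.e.\ $\lambda'_1\le N$, which always holds. So column $0$ imposes nothing, consistent with $\lambda'_1\ge\mu'_1$ being supplied by inclusion.

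If instead the paper's convention lets arrows enter from the left, as in \Cref{rem:higher_spin}, the same computation applies with $\lambda'_0-\mu'_0$ equal to the incoming flux. The condition at column $0$ is then the top of the interlacing chain, and the argument goes through unchanged. Finally, for columns beyond the support of both partitions, both sides vanish, so they need no attention.
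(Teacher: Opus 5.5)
Your computation for columns $x\ge1$ is exactly the paper's proof: $i_1=m_x(\mu)=\mu'_x-\mu'_{x+1}$ and $j_2=h_{x+1}=\lambda'_{x+1}-\mu'_{x+1}$, hence $j_2\le i_1\iff\lambda'_{x+1}\le\mu'_x$, combined with the inclusion $\mu'_x\le\lambda'_x$. In the paper's convention this is the whole proof. As in \Cref{fig:skew_row}, the row starts at column $1$ and zero parts are not recorded. The left boundary simply feeds $h_1=\lambda'_1-\mu'_1$ arrows into column $1$, so there is no column-$0$ vertex to check.

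Your first column-$0$ scenario (equal numbers $N$ of parts, $h_0=0$) is harmless, but the second one is wrong as stated. Suppose there is a genuine vertex at column $0$ with finite bottom occupation $m_0(\mu)=\mu'_0-\mu'_1$, where $\mu'_0$ is the total number of parts of $\mu$ including zeros, and $\lambda'_0-\mu'_0>0$ arrows enter from the left. Then the condition $j_2\le i_1$ at that vertex reads $\lambda'_1\le\mu'_0$. This is not part of the vertical-strip interlacing $\lambda'_1\ge\mu'_1\ge\lambda'_2\ge\cdots$, which begins at $\lambda'_1$. For example, take $\mu=\varnothing$ with no parts and $\lambda=(1)$: the shape $\lambda/\mu$ is a vertical strip, yet at column $0$ one has $j_2=h_1=1>0=i_1$. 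So in that convention the equivalence fails unless column $0$ is excluded, or $\mu$ is padded to at least $\ell(\lambda)$ parts; it does not ``go through unchanged.'' The fix is simply to work in the paper's convention, where the issue never arises.
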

\begin{proof}
At column $x$ the bottom vertical edge carries $i_1=m_x(\mu)=\mu'_x-\mu'_{x+1}$ and the outgoing
horizontal edge carries $j_2=h_{x+1}=\lambda'_{x+1}-\mu'_{x+1}$, so
\begin{equation*}
  j_2\le i_1
  \quad\iff\quad
  \lambda'_{x+1}-\mu'_{x+1}\le\mu'_x-\mu'_{x+1}
  \quad\iff\quad
  \lambda'_{x+1}\le\mu'_x.
\end{equation*}
This completes the proof.
\end{proof}

The vertical-strip model
is
discussed in detail in \Cref{subsec:schur_vertical} below.

\begin{remark}
\label{rmk:not_5v_model}
	The five-vertex model is not
	a direct specialization of
	the model in \Cref{rem:higher_spin},
	but it is a bijective change of coordinates from the horizontal-strip
	case, which corresponds to the shifted coordinates
	$\mathcal S(\lambda)$
	\eqref{eq:S_lambda}.
\end{remark}

\medskip

The rest of this subsection is a digression, not used later in the survey.
In the shifted coordinates \eqref{eq:S_lambda}, the six-vertex model of
\Cref{rem:ff6v} (that is, with the doubly occupied vertex $(1,1;1,1)$ allowed)
captures one more distinguished class of skew Young diagrams --- \emph{ribbons}
(border strips): connected skew diagrams containing no $2\times2$ block of boxes.

\begin{lemma}[Ribbon condition]
\label{lemma:ribbon_condition}
Let $\lambda,\mu$ be partitions with at most $N$ parts, both encoded as in
\eqref{eq:S_lambda} with this common $N$. Consider a single row of the
six-vertex model (all edges carry occupations in $\{0,1\}$, and all six
vertices of \Cref{fig:fivevertex_weights}, including $(1,1;1,1)$, are
admissible) with bottom boundary $\mathcal S(\mu)$, top boundary
$\mathcal S(\lambda)$, and empty left and right horizontal edges. An
admissible configuration exists if and only if $\mu\subseteq\lambda$ and every
connected component of $\lambda/\mu$ is a ribbon (equivalently, $\lambda/\mu$
contains no $2\times2$ block of boxes). The configuration is then unique;
see \Cref{fig:ribbon_row}.
\end{lemma}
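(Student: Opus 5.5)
The plan is to reduce everything to the occupation counts on horizontal edges, exactly as in the dictionary of \Cref{subsec:skew_rows}, but now in the shifted coordinates. Write $A=\mathcal S(\mu)$ (bottom) and $B=\mathcal S(\lambda)$ (top). Both are $N$-element subsets of $\mathbb{Z}_{\ge1}$. First, uniqueness: by path conservation $i_1+j_1=i_2+j_2$ with empty left boundary, the horizontal occupation on the edge to the right of column $x$ is forced to be
\begin{equation*}
  h_x=\#\{a\in A: a\le x\}-\#\{b\in B: b\le x\}.
\end{equation*}
So at most one configuration exists, and it exists if and only if $h_x\in\{0,1\}$ for all $x$. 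Emptiness at the right boundary holds automatically, since $|A|=|B|=N$. All six vertices are allowed, so no further local constraint arises: any choice of edge values in $\{0,1\}$ satisfying conservation is admissible.

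Next I translate $0\le h_x\le1$ into interlacing. Listing $a_j=\mu_j+N-j+1$ and $b_j=\lambda_j+N-j+1$ in decreasing order, $h_x\ge0$ for all $x$ is the standard statement that $a_j\le b_j$ for all $j$ (counting elements $\le x$ in both sets), i.e. $\mu\subseteq\lambda$. The condition $h_x\le1$ for all $x$ says that $\#\{a\le x\}\le\#\{b\le x\}+1$. Comparing order statistics, this is equivalent to $b_{j+1}<a_j$ for all $j$: if $b_{j+1}\ge a_j$ then at $x=b_{j+1}-1$... more carefully, at $x=a_j$ the $A$-count is $N-j+1$ while the $B$-count is at most $N-j-1$, giving $h\ge2$, and conversely. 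In partition terms, $b_{j+1}<a_j$ reads $\lambda_{j+1}+N-j<\mu_j+N-j+1$, that is $\lambda_{j+1}\le\mu_j$.

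Finally, I identify the combinatorics. The condition $\lambda_{j+1}\le\mu_j$ for all $j$ says precisely that rows $j$ and $j+1$ of $\lambda/\mu$ share no two columns $c,c+1$... more precisely, a $2\times2$ block of boxes in rows $j,j+1$ and columns $c,c+1$ of $\lambda/\mu$ exists iff $\mu_j<c$ and $c+1\le\lambda_{j+1}$, which for some $c$ is equivalent to $\lambda_{j+1}\ge\mu_j+2$. Here I must be careful: the derived condition is $\lambda_{j+1}\le\mu_j$, while the no-$2\times2$ condition is $\lambda_{j+1}\le\mu_j+1$. The off-by-one is the main obstacle, and I expect it to come from the strictness in the order-statistics step: $h_x\le1$ should give $b_{j+1}\le a_j$, possibly with equality allowed at a site that is both in $A$ and $B$. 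So I will redo that count carefully, checking whether $x=a_j=b_{j+1}$ gives $h_x=1$ (the doubly occupied vertex $(1,1;1,1)$, which is exactly what the six-vertex model permits). Then $b_{j+1}\le a_j$ translates to $\lambda_{j+1}\le\mu_j+1$, matching the ribbon condition.

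To conclude, I will note that a skew shape with no $2\times2$ block has ribbons as its connected components, since connected skew shapes without $2\times2$ blocks are by definition border strips. Together with uniqueness from the first step, this proves the lemma. I will sanity-check the boundary case $j=N$ with $\lambda_{N+1}=0$ and the figure example.
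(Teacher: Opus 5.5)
Your plan is correct, and you are right about the off-by-one: the first equivalence you state, with $b_{j+1}<a_j$, is false. The correct condition is $b_{j+1}\le a_j$ for $1\le j\le N-1$, and your own count already proves it.

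At $x=a_j$, the $A$-count is $N-j+1$. The $B$-count is at most $N-j-1$ only when $b_{j+1}>a_j$. If $a_j=b_{j+1}$, the $B$-count is $N-j$, so $h_{x-1}=h_x=1$ and the site carries the vertex $(1,1;1,1)$. The example of \Cref{fig:ribbon_row} has $a_2=b_3=7$ and $a_3=b_4=6$, so the strict version would wrongly reject it.

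For the converse, suppose $h_x\ge2$ and let $k=\#\{a\le x\}$. Then $j=N-k+1$ gives $a_j\le x<b_{j+1}$. Combined with $a_j\le b_j$, you get the shifted interlacing $b_{j+1}\le a_j\le b_j$, that is, $\mu_j\le\lambda_j$ and $\lambda_{j+1}\le\mu_j+1$. Your adjacent-row check, that a $2\times2$ block exists iff $\lambda_{j+1}\ge\mu_j+2$ for some $j$, then finishes the proof.

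This route differs from the paper's. The paper rewrites $h_x$ through contents: row $i$ of $\lambda/\mu$ has a box of content $x-N$ iff $\mu_i-i<x-N\le\lambda_i-i$. So $h_x$ counts the boxes of $\lambda/\mu$ on the diagonal of content $x-N$. Then $h_x\ge0$ is containment, and $h_x\le1$ says each diagonal carries at most one box. This needs one more short argument: two boxes of equal content force such boxes in all intermediate rows, and two in adjacent rows span a $2\times2$ block.

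Your order-statistics argument avoids contents entirely and localizes the obstruction to a pair of adjacent rows. Its equality cases $\lambda_{j+1}=\mu_j+1$ are exactly the doubly occupied vertices, i.e.\ the boxes of $\lambda/\mu$ with a bottom neighbor. This gives the second count of \Cref{rmk:ribbon_MN} directly. The paper's route instead yields the interpretation of horizontal edges as diagonal counts. It uses this in the caption of \Cref{fig:ribbon_row} and in the comparison with $h_x=\lambda'_x-\mu'_x$ (columns versus diagonals).
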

\begin{proof}
The configuration is forced by the boundary data: the occupation $h_x$ of the
horizontal edge between columns $x$ and $x+1$ must equal
\begin{equation}
\label{eq:ribbon_h}
	h_x=\#\bigl(\mathcal S(\mu)\cap\{1,\dots,x\}\bigr)
	-\#\bigl(\mathcal S(\lambda)\cap\{1,\dots,x\}\bigr)
	=\#\{i\colon \lambda_i-i\ge x-N\}-\#\{i\colon \mu_i-i\ge x-N\},
\end{equation}
and any assignment with all $h_x\in\{0,1\}$ is admissible. Row $i$ of
$\lambda/\mu$ contains a box of content $x-N$ (content $=$ column $-$ row) if
and only if $\mu_i-i<x-N\le\lambda_i-i$. Hence $h_x\ge0$ for all $x$ is
equivalent to $\mu\subseteq\lambda$, and then $h_x$ counts the boxes of
$\lambda/\mu$ of content $x-N$. The condition $h_x\le1$ states that every
diagonal of $\lambda/\mu$ carries at most one box, which is equivalent to the
absence of a $2\times2$ block: two boxes of equal content force boxes of that
content in all intermediate rows, and two of them in adjacent rows produce a
$2\times2$ block.
\end{proof}

\begin{figure}[ht]
\centering
\begin{tikzpicture}[scale=0.72,>={Stealth[length=1.6mm]},rounded corners=1.0pt,baseline={(current bounding box.center)}]
  \foreach \c in {1,...,12}{\draw[gridln] (\c,-0.72)--(\c,0.72);}
  \draw[gridln] (0.35,0)--(12.5,0);
  \foreach \c in {1,...,12}{\node[gray!70,font=\scriptsize] at (\c,-1.02){$\c$};}
  \node[font=\scriptsize] at (13.15,0.42){$\mathcal S(\lambda)$};
  \node[font=\scriptsize] at (13.15,-0.42){$\mathcal S(\mu)$};
  \draw[uppath,->] (1,-0.72)--(1,0)--(2,0)--(2,0.72);      
  \draw[uppath,->] (6,-0.72)--(6,0.72);                    
  \draw[uppath,->] (7,-0.72)--(7,0.72);                    
  \draw[uppath,->] (4,-0.72)--(4,0)--(8,0)--(8,0.72);      
  \draw[uppath,->] (10,-0.72)--(10,0)--(12,0)--(12,0.72);  
\end{tikzpicture}
\hspace{0.6cm}
\begin{tikzpicture}[scale=0.42,baseline={(current bounding box.center)}]
  \fill[gray!18] (0,0) rectangle (5,-1);
  \fill[gray!18] (0,-1) rectangle (3,-2);
  \fill[gray!18] (0,-2) rectangle (3,-3);
  \fill[gray!18] (0,-3) rectangle (2,-4);
  \fill[upcol!22] (5,0) rectangle (7,-1);
  \fill[upcol!22] (3,-1) rectangle (4,-2);
  \fill[upcol!22] (3,-2) rectangle (4,-3);
  \fill[upcol!22] (2,-3) rectangle (4,-4);
  \fill[upcol!22] (0,-4) rectangle (1,-5);
  \node[gray!70,font=\scriptsize] at (5.5,-0.5) {$5$};
  \node[gray!70,font=\scriptsize] at (6.5,-0.5) {$6$};
  \node[gray!70,font=\scriptsize] at (3.5,-1.5) {$2$};
  \node[gray!70,font=\scriptsize] at (3.5,-2.5) {$1$};
  \node[gray!70,font=\scriptsize] at (2.5,-3.5) {$-1$};
  \node[gray!70,font=\scriptsize] at (3.5,-3.5) {$0$};
  \node[gray!70,font=\scriptsize] at (0.5,-4.5) {$-4$};
  \foreach \rr/\ww in {1/7,2/4,3/4,4/4,5/1}{\foreach \cc in {1,...,\ww}{\draw (\cc-1,-\rr+1) rectangle (\cc,-\rr);}}
\end{tikzpicture}
\caption{Left: the unique single-row six-vertex configuration with bottom
$\mathcal S(\mu)=\{1,4,6,7,10\}$ and top $\mathcal S(\lambda)=\{2,6,7,8,12\}$,
where $\mu=(5,3,3,2)$, $\lambda=(7,4,4,4,1)$, and $N=5$. Right: the
disconnected skew shape $\lambda/\mu$, a disjoint union of three ribbons of
heights $1,3,1$, with the content of each box indicated; the components do
not even share corners, being separated by the empty diagonals of contents
$-3,-2$ and $3,4$. The horizontal edge between columns $x$ and $x+1$ is
occupied precisely when $\lambda/\mu$ has a box of content $x-N$. The path
emitted at column $4$ jumps over the two stationary particles at columns $6$
and $7$ --- the doubly occupied vertices $(1,1;1,1)$ --- which match the two
boxes with a bottom neighbor (contents $1$ and $2$) in the height-$3$ ribbon,
in agreement with \Cref{rmk:ribbon_MN}.}
\label{fig:ribbon_row}
\end{figure}

Compare \eqref{eq:ribbon_h} with the identity
$h_x=\lambda'_x-\mu'_x$ from the beginning of this subsection: in the
multiplicity coordinates the horizontal edges count the columns of
$\lambda/\mu$, while in the shifted coordinates they count the diagonals.

\begin{remark}[Murnaghan--Nakayama structure]
\label{rmk:ribbon_MN}
	In the configuration of \Cref{lemma:ribbon_condition}, the number of
	vertices of type $(1,0;0,1)$ equals the number of ribbon components of
	$\lambda/\mu$, and the number of doubly occupied vertices $(1,1;1,1)$
	equals $\sum_{r}(\mathrm{ht}(r)-1)$, where $\mathrm{ht}(r)$ is the number
	of rows of the ribbon $r$ (the doubly occupied columns correspond to the
	boxes of $\lambda/\mu$ having a bottom neighbor). Thus, assigning weight
	$-1$ to the vertex $(1,1;1,1)$ produces the signs of the
	Murnaghan--Nakayama rule \cite[Ch.~I, \S7]{Macdonald1995}. Matrix elements
	of the free fermion six-vertex transfer matrices, with weights factorized
	over the ribbons of $\lambda/\mu$, are computed in
	\cite[\S4.2]{Naprienko2023}; for the Yang--Baxter algebra of the general
	asymmetric six-vertex model in terms of ribbons, leading to cylindric
	Murnaghan--Nakayama rules for Hecke characters, see
	\cite[\S4.3]{korff2021cylindric}.
\end{remark}

\subsection{Schur vertex model II: Vertical strips}
\label{subsec:model_II}
\label{subsec:schur_vertical}

We now describe the third Schur vertex model (labeled II in the subsection
titles, since the higher-spin realization of \Cref{rem:higher_spin} counts as
the second),
which operates with the transposed Young diagram, and corresponds to
vertical-strip interlacing sequences
as described in \Cref{subsec:skew_rows} above.
This model is the $q=0$ degeneration of the $q$-Whittaker vertex model
introduced in \cite[\S6.1]{korff2013cylindric}, see \Cref{subsec:qw} below.
We
follow
\cite{MucciconiPetrov2020}.
This vertex model may be viewed as a ``dual'' to the
free fermion five-vertex model of \Cref{subsec:model_I}
(and its higher-spin equivalent version from \Cref{rem:higher_spin}).

We encode top and bottom boundary conditions of this
new vertex model by Young diagrams. Namely, to
a Young diagram $\lambda$ we associate the multiset of its column lengths via
\begin{equation}
	\label{eq:vertical_strip_coordinates_dualization}
	\lambda_i-\lambda_{i+1} = \#\{\text{paths at the $i$-th vertical edge}\},\qquad i=1,2,\ldots.
\end{equation}
For example, the Young diagram $\lambda=(4,4,1)$ corresponds to the configuration
with three vertical edges in column $2$, one vertical edge in column $3$, and no vertical edges
in the other columns.

\begin{remark}
\label{rmk:conjugate_coordinates}
Note that the convention \eqref{eq:vertical_strip_coordinates_dualization}
used for the vertical-strip model
differs from the one of \Cref{subsec:skew_rows} by conjugation.
Therefore, in the vertical-strip model, we still get \emph{horizontal-strip differences}
between consecutive rows of the vertex model.
In the Schur case this change of convention is immaterial: in either dictionary the
partition function is a skew Schur polynomial, and only the indexing shape is transposed.
However, we follow the existing literature \cite{BorodinWheelerSpinq}, \cite{MucciconiPetrov2020}
which deals with deformations of the models
where this conjugation is meaningful. We
discuss some of the deformations in \Cref{sec:qdef,sec:def_rsk} below.
\end{remark}

We now define two vertex models, exchanged by the reflection
$i_1\leftrightarrow i_2$ of the vertical axis. The \emph{up-right}
(\textcolor{upcol}{blue}) weight is
\begin{equation}
  \label{eq:upright_cross}
  W(i_1,j_1;i_2,j_2)=\mathbf 1_{i_1+j_1=i_2+j_2}\,\mathbf 1_{i_1\ge j_2}\,x^{j_2},
\end{equation}
with $x$ the spectral parameter of the row, and the \emph{down-right}
(\textcolor{downcol}{red}) weight is its reflection,
\begin{equation}
  \label{eq:downright_cross}
  W^{*}(i_1,j_1;i_2,j_2)=\mathbf 1_{i_2+j_1=i_1+j_2}\,\mathbf 1_{i_2\ge j_2}\,x^{j_2}.
\end{equation}
Here $i_1,i_2, j_1,j_2\in\mathbb{Z}_{\ge0}$.

\medskip

Encode the bottom and top boundaries of a single row by the column-length coordinates
\eqref{eq:vertical_strip_coordinates_dualization} of two partitions $\mu$ and
$\lambda$. The condition $i_1\ge j_2$ in \eqref{eq:upright_cross} (respectively $i_2\ge j_2$ in
\eqref{eq:downright_cross}) is equivalent to the requirement that $\lambda/\mu$ be a
\emph{horizontal} strip, see \Cref{lemma:vertical_strip_condition}
and \Cref{rmk:conjugate_coordinates}.
The skew functions $G_{\lambda/\mu}(x)$ and $G^{*}_{\lambda/\mu}(x)$ are defined
as the weights of the single rows with these boundary conditions
(for a single row, there is only one admissible configuration in both vertex models).

Gluing $k$ rows with spectral
parameters $y_1,\dots,y_k$ produces $G_{\lambda/\mu}(y_1,\dots,y_k)$ and
$G^{*}_{\lambda/\mu}(y_1,\dots,y_k)$
in the similar manner.
\begin{remark}
\label{rmk:infinite_boundary_at_column_0}
The horizontal arrows on the left boundary
in the models for $G_{\lambda/\mu}$ and $G^{*}_{\lambda/\mu}$ may be arbitrary.
This may be modeled by placing an infinite number of vertical edges at column $0$ with occupation $i_1=i_2=\infty$; formally, one sets $i_1=M$, lets the outgoing occupation be determined
by arrow conservation, and sends $M\to\infty$. The two conservation laws differ:
for $W$ one has $i_2=M+j_1-j_2$, while for $W^{*}$, whose law is $i_2+j_1=i_1+j_2$,
one has $i_2=M+j_2-j_1$.
Then, the weights
become $W(\infty,j_1;\infty,j_2)=x^{j_2}$ and $W^{*}(\infty,j_1;\infty,j_2)=x^{j_2}$,
which are independent of $j_1$; for \eqref{eq:upright_cross} this value is already
attained at every finite $M\ge j_2$, and for \eqref{eq:downright_cross} at every finite
$M\ge j_1$.
These boundary weights are the correct choice for our partition functions.
This convention is also equivalent to setting $\lambda_0=+\infty$, see
\eqref{eq:vertical_strip_coordinates_dualization}.
\end{remark}

Configurations contributing a nonzero weight to $G_{\lambda/\mu}$ or $G^{*}_{\lambda/\mu}$ are in bijection with
interlacing chains of partitions $\mu=\lambda^{(0)}\prec\lambda^{(1)}\prec\cdots\prec\lambda^{(k)}=\lambda$,
with the $i$-th row creating the horizontal strip $\lambda^{(i)}/\lambda^{(i-1)}$.
Unlike for the models in
\Cref{subsec:model_I}, both $G$
and $G^{*}$ are defined for any number of variables and are stable under $y_k\to0$.
On the other hand, the number of variables $k$ must be at least the length of the longest column of $\lambda/\mu$
for the weight to be nonzero.
\begin{proposition}
\label{prop:schur_vertical}
For partitions $\mu\subseteq\lambda$ and any number of variables, we have
\begin{equation*}
  G_{\lambda/\mu}(y_1,\dots,y_k)\;=\;G^{*}_{\lambda/\mu}(y_1,\dots,y_k)\;=\;s_{\lambda/\mu}(y_1,\dots,y_k),
  \qquad G_{\lambda}:=G_{\lambda/\varnothing}.
\end{equation*}
\end{proposition}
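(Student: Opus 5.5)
The plan is to reduce everything to a single row and then use the branching rule for skew Schur polynomials. By construction, gluing $k$ rows gives
\[
G_{\lambda/\mu}(y_1,\dots,y_k)=\sum_{\mu=\lambda^{(0)}\prec\lambda^{(1)}\prec\cdots\prec\lambda^{(k)}=\lambda}\ \prod_{i=1}^k G_{\lambda^{(i)}/\lambda^{(i-1)}}(y_i),
\]
and the same holds for $G^*$. This is exactly the bijection between configurations and interlacing chains stated just before the proposition. The skew Schur polynomial satisfies the same branching formula, $s_{\lambda/\mu}(y_1,\dots,y_k)=\sum_{\mu\prec\lambda^{(1)}\prec\cdots\prec\lambda}\prod_i y_i^{|\lambda^{(i)}/\lambda^{(i-1)}|}$, coming from the tableau expansion in which each entry fills a horizontal strip. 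It therefore suffices to prove the one-variable statement: for $\mu\prec\lambda$, both $G_{\lambda/\mu}(x)$ and $G^*_{\lambda/\mu}(x)$ equal $x^{|\lambda|-|\mu|}$, and both vanish when $\lambda/\mu$ is not a horizontal strip.

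For a single row, fix the coordinates \eqref{eq:vertical_strip_coordinates_dualization}: column $c\ge1$ carries $\mu_c-\mu_{c+1}$ arrows at the bottom and $\lambda_c-\lambda_{c+1}$ at the top. Column $0$ carries the infinite reservoir of \Cref{rmk:infinite_boundary_at_column_0}. Conservation $i_1+j_1=i_2+j_2$ for $W$ determines the horizontal edges by telescoping from the right, where the row is empty far to the right. The edge leaving column $c$ carries
\[
j_2^{(c)}=\sum_{c'>c}\bigl[(\lambda_{c'}-\lambda_{c'+1})-(\mu_{c'}-\mu_{c'+1})\bigr]=\lambda_{c+1}-\mu_{c+1}.
\]
Hence the configuration is unique, and its weight is $\prod_{c\ge0}x^{\lambda_{c+1}-\mu_{c+1}}=x^{|\lambda|-|\mu|}$. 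The column-$0$ vertex contributes $x^{\lambda_1-\mu_1}$, which is exactly why the reservoir convention is needed. The admissibility condition $i_1\ge j_2$ at column $c\ge1$ reads $\mu_c-\mu_{c+1}\ge\lambda_{c+1}-\mu_{c+1}$, that is, $\mu_c\ge\lambda_{c+1}$. Together with $j_2\ge0$, i.e.\ $\lambda_{c+1}\ge\mu_{c+1}$, this is exactly the interlacing $\mu\prec\lambda$; this is the content of \Cref{lemma:vertical_strip_condition} after the conjugation of \Cref{rmk:conjugate_coordinates}. This proves $G_{\lambda/\mu}(x)=\mathbf 1_{\mu\prec\lambda}\,x^{|\lambda/\mu|}$.

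For $W^*$ the roles of top and bottom are swapped: the law $i_2+j_1=i_1+j_2$ gives $j_2^{(c)}=\mu_{c+1}-\lambda_{c+1}$ if we keep $\mu$ at the bottom. So the sign convention must be checked. I expect this bookkeeping to be the main obstacle: confirming which boundary is the ``bottom'' for the down-right model, namely that in $G^*_{\lambda/\mu}$ the larger partition sits where the paths exit, so that the horizontal occupations are again $\lambda_{c+1}-\mu_{c+1}\ge0$. The admissibility condition $i_2\ge j_2$ should then again reduce to $\mu_c\ge\lambda_{c+1}$, giving the identical formula. With this orientation fixed so that nonnegativity of the edges is equivalent to $\mu\subseteq\lambda$, the computation is word-for-word the same as for $W$. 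The $k$-row statement then follows by the branching formula above, and the case $G_\lambda$ is $\mu=\varnothing$.
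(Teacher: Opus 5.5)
Your proposal is correct and follows the paper's argument. The paper likewise identifies configurations with interlacing chains (equivalently, semistandard tableaux of shape $\lambda/\mu$) and checks that the weights give $y^{\mathrm{wt}(T)}$; your single-row telescoping makes this explicit. The orientation you settle on for $W^{*}$ is also right: under the law $i_2+j_1=i_1+j_2$ the paths travel downward, and in the Cauchy lattice the $W^{*}$ block has the larger partition on its bottom edge, so that check is the mirror image of the one for $W$.
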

\begin{proof}
Follows by identifying vertex model configurations with chains of interlacing partitions
(differing by horizontal strips), equivalently, with semistandard Young tableaux of shape $\lambda/\mu$.
An illustration of this identification is given in \Cref{fig:upright_model}.
The vertex weights \eqref{eq:upright_cross} and \eqref{eq:downright_cross}
produce the correct weight $y^{\mathrm{wt}(T)}$ for each tableau $T$ entering
the (skew) Schur polynomial.
\end{proof}

\begin{figure}[ht]
\centering
\begin{tikzpicture}[scale=0.82,>={Stealth[length=1.8mm]},rounded corners=0.8pt,baseline={(current bounding box.center)}]
  \foreach \j in {1,...,6}{\draw[gridln] (0,\j)--(\j,\j);}
  \foreach \i in {0,...,5}{\draw[gridln] (\i,\i)--(\i,6.15);}
  \draw[downpath,line width=1.4pt] (0,0)--(6.2,6.2);
  \node[downcol,font=\scriptsize,rotate=45,anchor=north] at (4.6,4.3) {diagonal};
  \foreach \j in {1,...,5}{\node[left,font=\scriptsize] at (-0.12,\j) {$y_{\j}$};}
  \foreach \i in {1,...,4}{\node[font=\scriptsize,gray!80] at (\i,0.5) {$\i$};}
  \draw[uppath,->] (0,1)--(1,1)--(1,6.15);                                   
  \draw[uppath,->] (0,2)--(2,2)--(2,4)--(3,4)--(3,6.15);                     
  \draw[uppath,->] (0,3.1)--(1.1,3.1)--(1.1,4)--(2,4)--(2,6.15);
  \draw[uppath,->] (0,3)--(2.1,3)--(2.1,4.1)--(3.1,4.1)--(3.1,5)--(4,5)--(4,6.15);
  \draw[uppath,->] (0,5)--(1.1,5)--(1.1,6.15);
  \draw[uppath,line width=3.6pt,opacity=0.40] (0,-0.25)--(0,6.2);
  \node[font=\scriptsize] at (0,-0.55) {$\infty$};
  \node[font=\scriptsize] at (0,6.5) {$\infty$};
\end{tikzpicture}
\hspace{0.9cm}
\begin{tikzpicture}[scale=0.42,baseline={(current bounding box.center)}]
  \foreach \rr/\ww in {1/5,2/3,3/2,4/1}{\foreach \cc in {1,...,\ww}{\draw (\cc-1,-\rr+1) rectangle (\cc,-\rr);}}
  \foreach \cc/\val in {1/1,2/2,3/3,4/3,5/5}{\node at (\cc-0.5,-0.5){\small$\val$};}
  \foreach \cc/\val in {1/2,2/3,3/4}{\node at (\cc-0.5,-1.5){\small$\val$};}
  \foreach \cc/\val in {1/4,2/4}{\node at (\cc-0.5,-2.5){\small$\val$};}
  \node at (0.5,-3.5){\small$5$};
  \node at (2.5,-4.9){SSYT};
\end{tikzpicture}
\caption{A configuration of the vertex model $W$ \eqref{eq:upright_cross} and the corresponding
semistandard Young tableau. The shape is $\lambda=(5,3,2,1,0)$, and
the weight is $y_1y_2^{2}y_3^{3}y_4^{3}y_5^{2}=y^{\mathrm{wt}(T)}$.
No path in the vertex model can go below the diagonal thanks to the condition $i_1\ge j_2$ in \eqref{eq:upright_cross}. The thick edge at column $0$ carries the infinite vertical occupation (see
\Cref{rmk:infinite_boundary_at_column_0}), from which the paths peel off to the right.}
\label{fig:upright_model}
\end{figure}

\subsection{Yang--Baxter equation and Cauchy identity}
\label{subsec:ybe_cauchy}

Let us now formulate the \emph{Yang--Baxter equation} connecting the models $W$ and $W^*$.
Define the cross-vertex weights by
\begin{equation}
\label{eq:bbR}
  \mathbb R_{x,y}(i_1,j_1;i_2,j_2)
  \;=\;
  \mathbb R_{x,y}\!\left(
  \vcenter{\hbox{\begin{tikzpicture}[scale=0.75,>={Stealth[length=2mm]},baseline=-0.4cm]
    \draw[uppath,line width=1.3pt,->] (-0.95,-0.95)--(0.95,0.95);
    \draw[downpath,line width=1.3pt,->] (-0.95,0.95)--(0.95,-0.95);
    \node[vtx] at (0,0) {};
    \node[below left]  at (-0.95,-0.95) {$i_1$};
    \node[above left]  at (-0.95,0.95)  {$j_1$};
    \node[above right] at (0.95,0.95)   {$i_2$};
    \node[below right] at (0.95,-0.95)  {$j_2$};
  \end{tikzpicture}}}
  \right)
  \;=\;
  \mathbf 1_{\,i_1+j_2=j_1+i_2\,}\,(xy)^{\min(i_2,j_2)},
\end{equation}
with $(i_1,j_1)$ the incoming and $(i_2,j_2)$ the outgoing arrow counts.

\begin{proposition}[Yang--Baxter equation]
\label{prop:ybe}
For all boundary occupations $i_1,i_2,i_3,j_1,j_2,j_3\in\mathbb Z_{\ge0}$ and
spectral parameters $x,y$, we have
\begin{multline}
  \label{eq:ybe}
  \sum_{k_1,k_2,k_3\ge0}
  \mathbb R_{x,y}(i_2,i_1;k_2,k_1)\,
  W(k_3,k_2;j_3,j_2)\,
  W^{*}(i_3,k_1;k_3,j_1)
  \\=
  \sum_{k_1',k_2',k_3'\ge0}
  W(i_3,i_2;k_3',k_2')\,
  W^{*}(k_3',i_1;j_3,k_1')\,
  \mathbb R_{x,y}(k_2',k_1';j_2,j_1),
\end{multline}
where $W=W_x$ \eqref{eq:upright_cross} and $W^{*}=W^{*}_y$
\eqref{eq:downright_cross} carry the row parameters $x$ and $y$. Graphically,
the two local vertex configurations of \Cref{fig:ybe} have equal partition
functions: the $\mathbb R$-cross may be dragged across the $W$/$W^{*}$ pair while
preserving the six boundary edges.
\end{proposition}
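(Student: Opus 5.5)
The plan is to verify \eqref{eq:ybe} by direct computation. The key observation is that on each side, arrow conservation at the three vertices leaves only one free internal occupation, so each side collapses to a finite geometric sum in $xy$ times a common prefactor.

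\emph{Left-hand side.} The three vertices impose the following constraints.
\begin{itemize}
\item $W(k_3,k_2;j_3,j_2)$ forces $k_2=j_3+j_2-k_3$ and $k_3\ge j_2$, and contributes $x^{j_2}$.
\item $W^*(i_3,k_1;k_3,j_1)$ forces $k_1=i_3+j_1-k_3$ and $k_3\ge j_1$, and contributes $y^{j_1}$.
\item The cross $\mathbb R_{x,y}(i_2,i_1;k_2,k_1)$ requires $i_2+k_1=i_1+k_2$, and contributes $(xy)^{\min(k_1,k_2)}$.
\end{itemize}
Substituting the first two expressions into the cross constraint, it becomes the global flux condition $i_1+j_2+j_3=i_2+i_3+j_1$. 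If this fails, both sides vanish; the same condition will appear on the right, and I will check this explicitly. Assuming it holds, the sum runs over $k_3$ with $B\le k_3\le A$, where $B=\max(j_1,j_2)$ (from the indicator constraints) and $A=\min(j_3+j_2,\,i_3+j_1)$ (from $k_1,k_2\ge0$). On this range $\min(k_1,k_2)=A-k_3$, so
\[
\text{LHS}=x^{j_2}y^{j_1}\sum_{t=0}^{A-B}(xy)^t ,
\]
which is interpreted as $0$ when $A<B$.

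\emph{Right-hand side.} Again the three vertices give the following.
\begin{itemize}
\item $W(i_3,i_2;k_3',k_2')$ gives $k_2'=i_3+i_2-k_3'$ with $k_2'\le i_3$, and contributes $x^{k_2'}$.
\item $W^*(k_3',i_1;j_3,k_1')$ gives $k_1'=j_3+i_1-k_3'$ with $k_1'\le j_3$, and contributes $y^{k_1'}$.
\item $\mathbb R_{x,y}(k_2',k_1';j_2,j_1)$ requires $k_2'-k_1'=j_2-j_1$ (again equivalent to the flux condition), and contributes the constant $(xy)^{\min(j_1,j_2)}$.
\end{itemize}
Set $s=k_1'-j_1=k_2'-j_2$. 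Then each summand equals $x^{j_2}y^{j_1}(xy)^{s+\min(j_1,j_2)}$. Put $u=s+\min(j_1,j_2)$.
\begin{itemize}
\item Nonnegativity of $k_1',k_2'$ is exactly $u\ge0$.
\item The bounds $k_1'\le j_3$ and $k_2'\le i_3$ give $u\le\min(j_3-j_1,\,i_3-j_2)+\min(j_1,j_2)=A-B$.
\end{itemize}
The right-hand side is therefore the same sum $x^{j_2}y^{j_1}\sum_{u=0}^{A-B}(xy)^u$, which proves the proposition.

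The only delicate point is this last range computation. One must check that the identity $\min(j_3-j_1,\,i_3-j_2)+\min(j_1,j_2)=\min(j_3+j_2,\,i_3+j_1)-\max(j_1,j_2)$ holds; this follows from $j_1+j_2=\min(j_1,j_2)+\max(j_1,j_2)$. One must also confirm that the empty-range cases match on both sides.

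This computation also exhibits the bijection used later. Matching $t=A-k_3$ with $u$ gives the unique monomial-preserving pairing of summands, since within each side all exponents $t$ (respectively $u$) are distinct.
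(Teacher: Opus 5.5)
Your proof is correct and is essentially the paper's argument (the term-by-term verification in \Cref{lemma:schur_yb_bijection}): conservation leaves one free internal occupation per side, the indicators cut out ranges of equal length, and the monomials agree. You parametrize by $k_3$ and $u$ and sum both sides to $x^{j_2}y^{j_1}\sum_{t=0}^{A-B}(xy)^t$, whereas the paper parametrizes by $k_1,k_1'$ and writes the matching $k_1'=j_1-\min(j_1,j_2)+\min(k_1,k_2)$, which is exactly your $t=u$; the one constraint you leave implicit, $k_3'\ge0$, follows from $k_1'\le j_3$ (and $k_3\ge0$ from $k_3\ge B$).
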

\begin{proof}
	Follows by a direct computation with explicit weights. A term-by-term
	verification, with the summands and their admissible ranges written out, is
	given in \Cref{lemma:schur_yb_bijection} below.
\end{proof}

\begin{figure}[ht]
\centering
\begin{tikzpicture}[scale=0.85,line cap=round,
    baseline={(current bounding box.center)},font=\small,
    every node/.style={inner sep=1.4pt}]
  \def\lw{2pt}
  \draw[uppath,line width=\lw,ybeflow]  (-1.2,-0.9)--(0.2,0);
  \draw[uppath,line width=\lw,ybeflowb] (0.2,0)--(1.6,0.9);
  \draw[uppath,line width=\lw,ybeflow]  (1.6,0.9)--(3.0,0.9);
  \draw[downpath,line width=\lw,ybeflow]  (-1.2,0.9)--(0.2,0);
  \draw[downpath,line width=\lw,ybeflowb] (0.2,0)--(1.6,-0.9);
  \draw[downpath,line width=\lw,ybeflow]  (1.6,-0.9)--(3.0,-0.9);
  \draw[downpath,line width=\lw,ybeflow] (1.6,-0.9)--(1.6,-2.0);
  \draw[downpath,line width=\lw,ybeflow] (1.6,0)--(1.6,-0.9);
  \draw[uppath,line width=\lw,ybeflow]   (1.6,0)--(1.6,0.9);
  \draw[uppath,line width=\lw,ybeflow]   (1.6,0.9)--(1.6,2.0);
  \node[circle,fill=black,inner sep=0pt,minimum size=3pt] at (0.2,0){};
  \node[circle,fill=black,inner sep=0pt,minimum size=3pt] at (1.6,0.9){};
  \node[circle,fill=black,inner sep=0pt,minimum size=3pt] at (1.6,-0.9){};
  \node[left]  at (-1.2,0.9){$i_1$};
  \node[left]  at (-1.2,-0.9){$i_2$};
  \node[above] at (1.6,2.0){$j_3$};
  \node[below] at (1.6,-2.0){$i_3$};
  \node[right] at (3.0,0.9){$j_2$};
  \node[right] at (3.0,-0.9){$j_1$};
  \node at (0.73,0.72){$k_2$};
  \node at (0.73,-0.72){$k_1$};
  \node[right] at (1.70,-0.04){$k_3$};
\end{tikzpicture}
\qquad$=$\qquad
\begin{tikzpicture}[scale=0.85,line cap=round,
    baseline={(current bounding box.center)},font=\small,
    every node/.style={inner sep=1.4pt}]
  \def\lw{2pt}
  \draw[uppath,line width=\lw,ybeflow]  (-1.2,-0.9)--(0,-0.9);
  \draw[uppath,line width=\lw,ybeflowb] (0,-0.9)--(1.5,0);
  \draw[uppath,line width=\lw,ybeflow]  (1.5,0)--(3.0,0.9);
  \draw[downpath,line width=\lw,ybeflow]  (-1.2,0.9)--(0,0.9);
  \draw[downpath,line width=\lw,ybeflowb] (0,0.9)--(1.5,0);
  \draw[downpath,line width=\lw,ybeflow]  (1.5,0)--(3.0,-0.9);
  \draw[uppath,line width=\lw,ybeflow]   (0,-2.0)--(0,-0.9);
  \draw[uppath,line width=\lw,ybeflow]   (0,-0.9)--(0,0);
  \draw[downpath,line width=\lw,ybeflow] (0,0.9)--(0,0);
  \draw[downpath,line width=\lw,ybeflow] (0,2.0)--(0,0.9);
  \node[circle,fill=black,inner sep=0pt,minimum size=3pt] at (0,0.9){};
  \node[circle,fill=black,inner sep=0pt,minimum size=3pt] at (0,-0.9){};
  \node[circle,fill=black,inner sep=0pt,minimum size=3pt] at (1.5,0){};
  \node[left]  at (-1.2,0.9){$i_1$};
  \node[left]  at (-1.2,-0.9){$i_2$};
  \node[above] at (0,2.0){$j_3$};
  \node[below] at (0,-2.0){$i_3$};
  \node[right] at (3.0,0.9){$j_2$};
  \node[right] at (3.0,-0.9){$j_1$};
  \node at (0.91,0.72){$k_1'$};
  \node at (0.91,-0.72){$k_2'$};
  \node[left] at (-0.24,0.0){$k_3'$};
\end{tikzpicture}
\caption{The Yang--Baxter equation \eqref{eq:ybe} of \Cref{prop:ybe}. The
boundary arrow counts $i_1,i_2,i_3,j_1,j_2,j_3\in\mathbb Z_{\ge0}$ are fixed, and the
internal arrow counts $k_1,k_2,k_3$ and $k_1',k_2',k_3'$ are summed over.
The vertical edge on the left or on the right serves as a source or a sink, respectively.}
\label{fig:ybe}
\end{figure}

Let us now use the weights $W$ and $W^*$ to construct a vertex model for the sum
\begin{equation}
  \label{eq:cauchy_LHS}
  \sum_{\lambda} s_\lambda(x_1,\dots,x_N)\, s_\lambda(y_1,\dots,y_N),
\end{equation}
where $\lambda$ runs over all partitions with at most $N$ parts.
That is, we realize the Schur polynomial $s_\lambda(x_1,\dots,x_N)$ as the partition function of the up-right model
with $N$ rows, weights $W$, and top boundary $\lambda$; and we realize $s_\lambda(y_1,\dots,y_N)$ as the partition function of the down-right model
with $N$ rows, weights $W^*$, and bottom boundary $\lambda$. See \Cref{fig:cauchy_model} for an illustration.

\begin{remark}
\label{rmk:convergence}
The sums in the YBE \eqref{eq:ybe} are finite for any fixed finite boundary
$i_1,i_2,i_3,j_1,j_2,j_3\in\mathbb Z_{\ge0}$
(see \Cref{fig:ybe}), so
the YBE is a polynomial identity in $x,y$.
However, for the Cauchy sum
\eqref{eq:cauchy_LHS} we employ the infinite boundary condition in the zeroth column, so the sum
runs over all partitions (with at most $N$ parts), and is a power series in the $x_i,y_j$,
convergent for $|x_i|,|y_j|<1$. Alternatively, it may be treated algebraically as a formal series.
The two
viewpoints are essentially equivalent, but since we are dealing with probabilistic connections, we
adopt the analytic one.
\end{remark}

\begin{figure}[ht]
\centering
\begin{tikzpicture}[scale=0.62,>={Stealth[length=1.7mm]},rounded corners=1.0pt,
    baseline={(current bounding box.center)},font=\small]
  \foreach \y in {-4,-3,-2,-1}{\draw[gridln] (0,\y)--({\y+5},\y);}
  \foreach \c in {1,2,3,4}{\draw[gridln] (\c,{\c-5})--(\c,0);}
  \foreach \y in {1,2,3,4}{\draw[gridln] (0,\y)--({5-\y},\y);}
  \foreach \c in {1,2,3,4}{\draw[gridln] (\c,0)--(\c,{5-\c});}
  \draw[upcol,densely dashed,line width=0.9pt] (0,-5)--(5,0);
  \draw[downcol,densely dashed,line width=0.9pt] (0,5)--(5,0);
  \draw[gray!70,densely dotted,line width=0.7pt] (-0.5,0)--(5.6,0);
  \draw[uppath,line width=3pt,opacity=0.30] (0,-5)--(0,-0.05);
  \draw[downpath,line width=3pt,opacity=0.30] (0,5)--(0,0.05);
  \node[upcol] at (0,-5.62) {$\infty$};
  \node[downcol] at (0,5.62) {$\infty$};
  \draw[uppath,->] (0,-3)--(2,-3)--(2,-0.08);        
  \draw[uppath,->] (0,-2)--(2.1,-2)--(2.1,-0.08);    
  \draw[uppath,->] (0,-1)--(2.2,-1)--(2.2,-0.08);    
  \draw[uppath,->] (0,-2.9)--(1,-2.9)--(1,-1.9)--(3,-1.9)--(3,-0.08);
  \draw[downpath,->] (0,3)--(2,3)--(2,0.08);         
  \draw[downpath,->] (0,2)--(2.1,2)--(2.1,0.08);     
  \draw[downpath,->] (0,1)--(2.2,1)--(2.2,0.08);     
  \draw[downpath,->] (0,4)--(1,4)--(1,0.9)--(3,0.9)--(3,0.08);
  \node[vtx] at (2,0){};\node[vtx] at (3,0){};
  \foreach \y/\lab in {-4/{x_1},-3/{x_2},-2/{x_3},-1/{x_4}}
    {\node[anchor=east,font=\footnotesize] at (-0.18,\y) {$\lab$};}
  \foreach \y/\lab in {4/{y_1},3/{y_2},2/{y_3},1/{y_4}}
    {\node[anchor=east,font=\footnotesize] at (-0.18,\y) {$\lab$};}
  \foreach \x in {1,2,3,4}{\node[gray!75,font=\scriptsize] at (\x,-5.98) {$\x$};}
  \node[gray!55,font=\scriptsize] at (3.7,-3.4) {boundary $\varnothing$};
  \node[gray!55,font=\scriptsize] at (3.7,3.4) {boundary $\varnothing$};
  \node[downcol] at (5.95,0) {$\lambda$};
  \node[upcol,rotate=-90,anchor=center,font=\footnotesize] at (5.55,-2.7)
    {$s_\lambda(x)$, model $W$};
  \node[downcol,rotate=-90,anchor=center,font=\footnotesize] at (5.55,2.7)
    {$s_\lambda(y)$, model $W^{*}$};
\end{tikzpicture}
\caption{The vertex model for the Cauchy sum
$\sum_\lambda s_\lambda(x_1,\dots,x_N)\,s_\lambda(y_1,\dots,y_N)$
with $N=4$.}
\label{fig:cauchy_model}
\end{figure}

We now explain how to use the YBE to evaluate the sum \eqref{eq:cauchy_LHS}
as a finite product; the three steps are illustrated in \Cref{fig:Cauchy_proof}.
Insert an auxiliary
$N\times N$ square of the cross-vertices $\mathbb{R}_{x_i,y_j}$ to the right of the lattice in \Cref{fig:cauchy_model}.
The crosses are initially empty thanks to the boundary conditions $\varnothing$ on the right,
so this inserted square must be empty, which contributes a factor of $1$ to the partition function.

We then drag this square leftward through the lattice, one cross at a time, by the
sequence of
Yang--Baxter moves of \Cref{prop:ybe} (see \Cref{fig:ybe}). Each move preserves the
partition function while altering the lattice by exchanging one blue line with one red.
After $N^2$ such moves, one for each pair $x_i,y_j$, all the crosses have passed to the left
of the zeroth column, where they form a network fed by the empty left boundary
(See \Cref{fig:Cauchy_proof} for an illustration).
Thanks to the boundary conditions $\infty$ in the zeroth column,
the outgoing occupations of each cross can be arbitrary, and the sum over them does not
depend on the incoming ones:
\begin{equation}
  \label{eq:R_row_sum}
  \sum_{i_2,j_2\ge0}\mathbb R_{x_i,y_j}(i_1,j_1;i_2,j_2)
  =\sum_{a\ge0}(x_iy_j)^{a}
  =\frac{1}{1-x_iy_j}
  \qquad\text{for every }(i_1,j_1),
\end{equation}
because arrow conservation forces $i_2-j_2=i_1-j_1$, leaving $\min(i_2,j_2)$ to run over all
of $\mathbb Z_{\ge0}$. Summing the crosses in the reverse of the order in which they feed one
another, each cross in turn has all of its outgoing edges free when it is summed, so the
network contributes $\prod_{i,j}(1-x_iy_j)^{-1}$. Only the first cross of the network is
forced by the boundary to have incoming occupations $(0,0)$; the inner edges may carry
nonzero occupations, as displayed in \Cref{fig:crosses_left}.
Meanwhile, the lattice to the right of the crosses, with the blue and red blocks
interchanged, carries no paths at all (and thus has weight $1$): an up-right path there
would have to exit through the empty top boundary, and a down-right path through the
empty bottom one.
Thus, we obtain
\begin{proposition}
\label{prop:Cauchy_identity}
	For any $N$ and $x_1,\dots,x_N,y_1,\dots,y_N$ with $|x_i|,|y_j|<1$, we have
	\begin{equation}
	\label{eq:cauchy}
		\sum_{\lambda} s_\lambda(x_1,\dots,x_N)\, s_\lambda(y_1,\dots,y_N)=\prod_{i,j=1}^{N}\frac{1}{1-x_iy_j},
	\end{equation}
	where the sum is over all partitions $\lambda$ with at most $N$ parts.
\end{proposition}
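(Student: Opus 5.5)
The proof follows the three-step argument sketched just before the statement, made rigorous in the analytic setting of \Cref{rmk:convergence}. First, I identify the left-hand side of \eqref{eq:cauchy} with the partition function $Z$ of the lattice in \Cref{fig:cauchy_model}. By \Cref{prop:schur_vertical}, the $N$ lower rows with weights $W_{x_i}$, infinite source at column $0$ (\Cref{rmk:infinite_boundary_at_column_0}), bottom boundary $\varnothing$ and top boundary $\lambda$ have weight $s_\lambda(x_1,\dots,x_N)$. The $N$ upper rows with weights $W^*_{y_j}$ give $s_\lambda(y_1,\dots,y_N)$ in the same way. The partitions on the middle horizontal line have at most $N$ parts, since each row adds a horizontal strip starting from $\varnothing$. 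Summing over the middle line therefore gives exactly \eqref{eq:cauchy_LHS}. For $|x_i|,|y_j|<1$ all sums converge absolutely, since every weight is a monomial bounded by the corresponding monomial in $|x_i|,|y_j|$. This justifies the rearrangements below.

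Next, I attach to the right of the lattice the $N\times N$ array of crosses $\mathbb R_{x_i,y_j}$, so that each blue line $x_i$ crosses each red line $y_j$ exactly once. Its incoming edges are the empty right boundary, and the cross weight \eqref{eq:bbR} forces zero outgoing occupations when both inputs are $0$. Indeed, conservation gives $i_2=j_2$, and the path-free right region then forces $i_2=j_2=0$. So the attached array contributes the factor $1$ and $Z$ is unchanged. I then move the crosses leftward one column at a time. Each elementary step is an application of \eqref{eq:ybe} of \Cref{prop:ybe} to one cross and one column's pair of vertices $W_{x_i},W^*_{y_j}$, with all other edges held fixed. This preserves the total weight, by linearity and absolute convergence, since the sum over the remaining configuration commutes with the local replacement. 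Ordering the moves is bookkeeping: I drag the whole square through column $c$, then column $c-1$, and so on. After one pass through a column, the vertical line of that column has its blue and red blocks interchanged.

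The step I expect to be the main obstacle is passing through column $0$, where the vertical occupation is infinite. I would handle it by the truncation of \Cref{rmk:infinite_boundary_at_column_0}. Replace $\infty$ by a finite $M$, apply \eqref{eq:ybe} there, and let $M\to\infty$. For fixed finite occupations on the other edges the weights stabilize, so the identity survives the limit term by term. Dominated convergence, using the geometric bounds above, handles the full sum.

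Finally, I evaluate the configuration after all the moves. To the right of column $0$ the lattice now has the red ($W^*$) rows below and the blue ($W$) rows above. An up-right path there would have to leave through the empty top, and a down-right path through the empty bottom, so this region is empty and has weight $1$. Left of column $0$ the crosses form a network fed by the empty left boundary. Their outgoing edges toward column $0$ are free because of the infinite source and sink. I sum the crosses in reverse feeding order, using \eqref{eq:R_row_sum}: each sum equals $(1-x_iy_j)^{-1}$ independently of that cross's inputs. This yields $\prod_{i,j}(1-x_iy_j)^{-1}$ and proves \eqref{eq:cauchy}.
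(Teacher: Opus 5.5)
Your proposal is correct and follows the paper's own argument. You attach the empty array of crosses $\mathbb R_{x_i,y_j}$, drag it leftward by \Cref{prop:ybe}, observe that the region to the right of column $0$ becomes path-free, and evaluate the remaining network of crosses via \eqref{eq:R_row_sum}; your column-by-column ordering and the truncation $\infty\rightsquigarrow M$ at column $0$ only make explicit what the paper leaves implicit. One small slip: $\mathbb R_{x,y}(0,0;m,m)=(xy)^m\neq0$, so the attached array is empty because its entire external boundary is empty, not because of the cross weight itself --- for instance, the lowest cross has its incoming blue edge and outgoing red edge on that boundary, conservation then forces its other two edges to vanish, and one proceeds upward through the array.
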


\begin{figure}[ht]
\centering
\begin{tabular}{@{}c@{\hspace{1.5em}}c@{}}
\begin{tikzpicture}[scale=0.45,>={Stealth[length=1.7mm]},rounded corners=1.0pt,
    baseline={(current bounding box.center)},font=\small]
  \foreach \i in {1,...,5}{
    \pgfmathsetmacro{\ys}{-\i}
    \pgfmathsetmacro{\xend}{6+(\i+5)/2+0.5}
    \pgfmathsetmacro{\yend}{(5-\i)/2+0.5}
    \draw[upcol,densely dotted,thick] (0,\ys)--(6,\ys)--(\xend,\yend);
  }
  \foreach \j in {1,...,5}{
    \pgfmathsetmacro{\ys}{\j}
    \pgfmathsetmacro{\xend}{6+(5+\j)/2+0.5}
    \pgfmathsetmacro{\yend}{(\j-5)/2-0.5}
    \draw[downcol,densely dotted,thick] (0,\ys)--(6,\ys)--(\xend,\yend);
  }
  \foreach \i in {1,...,5}{
    \foreach \j in {1,...,5}{
      \pgfmathsetmacro{\xv}{6+(\i+\j)/2}
      \pgfmathsetmacro{\yv}{(\j-\i)/2}
      \node[vtx] at (\xv,\yv){};
    }
  }
  \draw[gray!70,densely dotted,line width=0.7pt] (-0.5,0)--(5.6,0);
  \draw[uppath,line width=3pt,opacity=0.30] (0,-5)--(0,-0.05);
  \draw[downpath,line width=3pt,opacity=0.30] (0,5)--(0,0.05);
  \node[upcol] at (0,-5.62) {$\infty$};
  \node[downcol] at (0,5.62) {$\infty$};
  \draw[uppath,->] (0,-3)--(2,-3)--(2,-0.08);        
  \draw[uppath,->] (0,-2)--(2.1,-2)--(2.1,-0.08);    
  \draw[uppath,->] (0,-1)--(2.2,-1)--(2.2,-0.08);    
  \draw[uppath,->] (0,-4)--(1,-4)--(1,-1.9)--(3,-1.9)--(3,-0.08);
  \draw[downpath,->] (0,3)--(2,3)--(2,0.08);         
  \draw[downpath,->] (0,2)--(2.1,2)--(2.1,0.08);     
  \draw[downpath,->] (0,1)--(2.2,1)--(2.2,0.08);     
  \draw[downpath,->] (0,4)--(1,4)--(1,0.9)--(3,0.9)--(3,0.08);
  \node[vtx] at (2,0){};\node[vtx] at (3,0){};
  \foreach \y/\lab in {-5/{x_1},-4/{x_2},-3/{x_3},-2/{x_4},-1/{x_5}}
    {\node[anchor=east,font=\footnotesize] at (-0.18,\y) {$\lab$};}
  \foreach \y/\lab in {5/{y_1},4/{y_2},3/{y_3},2/{y_4},1/{y_5}}
    {\node[anchor=east,font=\footnotesize] at (-0.18,\y) {$\lab$};}
  \foreach \x in {1,2,3,4,5}{\node[gray!75,font=\scriptsize] at (\x,-5.98) {$\x$};}
  \node[anchor=north,font=\small] at ([yshift=-2pt]current bounding box.south) {\textbf{(a)}};
\end{tikzpicture}
&\qquad
\begin{tikzpicture}[scale=0.45,>={Stealth[length=1.7mm]},rounded corners=1.0pt,
    baseline={(current bounding box.center)},font=\small]
  \foreach \i in {2,...,5}{
    \pgfmathsetmacro{\ys}{-\i}
    \pgfmathsetmacro{\xend}{6+(\i+5)/2+0.5}
    \pgfmathsetmacro{\yend}{(5-\i)/2+0.5}
    \draw[upcol,densely dotted,thick] (0,\ys)--(6,\ys)--(\xend,\yend);
  }
  \foreach \j in {2,...,5}{
    \pgfmathsetmacro{\ys}{\j}
    \pgfmathsetmacro{\xend}{6+(5+\j)/2+0.5}
    \pgfmathsetmacro{\yend}{(\j-5)/2-0.5}
    \draw[downcol,densely dotted,thick] (0,\ys)--(6,\ys)--(\xend,\yend);
  }
  \foreach \j in {2,...,5}{
    \pgfmathsetmacro{\xv}{6+(1+\j)/2}
    \pgfmathsetmacro{\yv}{(\j-1)/2}
    \node[vtx] at (\xv,\yv){};
  }
  \foreach \i in {2,...,5}{
    \foreach \j in {1,...,5}{
      \pgfmathsetmacro{\xv}{6+(\i+\j)/2}
      \pgfmathsetmacro{\yv}{(\j-\i)/2}
      \node[vtx] at (\xv,\yv){};
    }
  }
  \draw[upcol,densely dotted,thick] (-1.5,-.5)--(0,1)--(6,1)--++(.75,-.75)--++(.5,0)--++(2.25,2.25);
  \draw[downcol,densely dotted,thick] (-1.5,.5)--(0,-1)--(6,-1)--++(.75,.75)--++(.5,0)--++(2.25,-2.25);
  \node[vtx] at (-1,0) {};
  \draw[gray!70,densely dotted,line width=0.7pt] (-0.5,0)--(5.6,0);
  \draw[gray,line width=3pt,opacity=0.30] (0,-5)--(0,-1);
  \draw[gray,line width=3pt,opacity=0.30] (0,5)--(0,1);
  \draw[gray,line width=3pt,opacity=0.30] (0,-1)--(0,-.05);
  \draw[gray,line width=3pt,opacity=0.30] (0,1)--(0,.05);
  \node[upcol] at (0,-5.62) {$\infty$};
  \node[downcol] at (0,5.62) {$\infty$};
  \draw[uppath,->] (0,-2)--(2.5,-2)--(2.5,-0.08);      
  \draw[uppath,->] (0,-1.9)--(3.5,-1.9)--(3.5,-0.08);  
  \draw[uppath,->] (0,-3)--(2.6,-3)--(2.6,-0.08);      
  \draw[uppath,->] (0,-5)--(1.5,-5)--(1.5,-0.08);      
  \draw[downpath,->] (0,2)--(2.5,2)--(2.5,0.08);       
  \draw[downpath,->] (0,3)--(2.6,3)--(2.6,0.08);       
  \draw[downpath,->] (0,4)--(1.5,4)--(1.5,0.9)--(3.5,0.9)--(3.5,0.08); 
  \draw[downpath,->] (0,5)--(1.5,5)--(1.5,0.08);       
  \draw[uppath,->] (-1,0)--(0,1);          
  \draw[uppath,->] (-.9,-.1)--++(1,1);    
  \draw[downpath,->] (-.9,.1)--++(1,-1);     
  \draw[downpath,->] (-1,0)--(0.1,-1.1); 
  \node[vtx] at (0.4,0){};\node[vtx] at (1.5,0){};\node[vtx] at (2.5,0){};\node[vtx] at (3.5,0){};
  \foreach \y/\lab in {-5/{x_1},-4/{x_2},-3/{x_3},-2/{x_4}}
    {\node[anchor=east,font=\footnotesize] at (-0.18,\y) {$\lab$};}
	\node[anchor=east,font=\footnotesize] at (-1.2,-1.2) {$x_5$};
	\node[anchor=east,font=\footnotesize] at (-1.2,1.2) {$y_5$};
  \foreach \y/\lab in {5/{y_1},4/{y_2},3/{y_3},2/{y_4}}
    {\node[anchor=east,font=\footnotesize] at (-0.18,\y) {$\lab$};}
  \foreach \x in {1,2,3,4,5}{\node[gray!75,font=\scriptsize] at (\x,-5.98) {$\x$};}
  \node[anchor=north,font=\small] at ([yshift=-2pt]current bounding box.south) {\textbf{(b)}};
\end{tikzpicture}
\\[4mm]
\begin{tikzpicture}[scale=0.45,>={Stealth[length=1.7mm]},rounded corners=1.0pt,
    baseline={(current bounding box.center)},font=\small]
  \draw[upcol,densely dotted,thick] (-2.4,-1.4)--(-1,0);    
  \draw[downcol,densely dotted,thick] (-2.4,1.4)--(-1,0);   
	\def\shh{0.11}
  \draw[uppath,->] (-1,0)--(0,1.0);
  \draw[uppath,->] (-1+\shh,-\shh)--++(1,1);
  \draw[uppath,->] (-1-\shh,\shh)--++(1.0,1.0);
  \draw[downpath,->] (-1+\shh,\shh)--++(1,-1.0);
  \draw[downpath,->] (-1-\shh,-\shh)--++(1,-1.0);
  \draw[downpath,->] (-1,0)--++(1,-1.0);
  \node[vtx] at (-1,0){};
  \draw[gray,line width=3pt,opacity=0.30] (0,-2.4)--(0,2.4);
  \node[upcol] at (0,2.8) {$\infty$};
  \node[downcol] at (0,-2.8) {$\infty$};
  \node[anchor=west] at (1.2,0)
     {$\displaystyle\sum_{a\ge0}(x_iy_j)^{a}=\frac{1}{1-x_iy_j}$};
  \node[anchor=north,font=\small] at ([yshift=-2pt]current bounding box.south) {\textbf{(c)}};
\end{tikzpicture}
\\
\end{tabular}%
	\caption{Illustration of the proof of the Cauchy identity (\Cref{prop:Cauchy_identity}) by the
	YBE. \textbf{(a)} Attaching the empty $N\times N$ square of
	$\mathbb{R}_{x_i,y_j}$ vertices to the right of the lattice in
	\Cref{fig:cauchy_model}.
	\textbf{(b)} Dragging one cross through the lattice by the YBE.
	\textbf{(c)} 
	After $N^2$ moves, all crosses have reached the left boundary (see \Cref{fig:crosses_left}
	for a smaller but more detailed example).
	Each cross contributes a geometric series to the partition function.}
\label{fig:Cauchy_proof}
\end{figure}

\begin{figure}[htb]
\centering
\begin{tikzpicture}[scale=1.25,>={Stealth[length=1.7mm]},rounded corners=1.0pt,
    baseline={(current bounding box.center)},font=\small]
  \def\sh{2.2}
  \foreach \i in {1,...,3}{
    \pgfmathsetmacro{\xs}{-(\i+3)/2-0.5-\sh}
    \pgfmathsetmacro{\ystart}{(3-\i)/2+0.5}
    \draw[downcol,densely dotted,thick] (\xs,\ystart)--(-\sh,-\i)--(1,-\i);
  }
  \foreach \j in {1,...,3}{
    \pgfmathsetmacro{\xs}{-(\j+3)/2-0.5-\sh}
    \pgfmathsetmacro{\ystart}{(\j-3)/2-0.5}
    \draw[upcol,densely dotted,thick] (\xs,\ystart)--(-\sh,\j)--(1,\j);
  }
  \node[draw, fill=white] at (-3.2,0) {2};
  \node[draw, fill=white] at (-3.7,0.5){0};
  \node[draw, fill=white] at (-4.2,1){1};
  \node[draw, fill=white] at (-3.7,-0.5){2};
  \node[draw, fill=white] at (-4.2,0){4};
  \node[draw, fill=white] at (-4.7,0.5){1};
  \node[draw, fill=white] at (-4.2,-1){3};
  \node[draw, fill=white] at (-4.7,-0.5){0};
  \node[draw, fill=white] at (-5.2,0){2};
  \draw[gray!70,densely dotted,line width=0.7pt] (-0.5,0)--(0.6,0);
  \draw[downpath,line width=3pt,opacity=0.30] (0,-3)--(0,-0.05);
  \draw[uppath,line width=3pt,opacity=0.30] (0,3)--(0,0.05);
  \node[downcol] at (0,-3.62) {$\infty$};
  \node[upcol] at (0,3.62) {$\infty$};
	\node at (-5.95,.75) {\scriptsize{}0};
	\node at (-5.95+.5,.75+.5) {\scriptsize{}0};
	\node at (-5.95+1,.75+1) {\scriptsize{}0};
	\node at (-5.95,-.75) {\scriptsize{}0};
	\node at (-5.95+.5,-.75-.5) {\scriptsize{}0};
	\node at (-5.95+1,-.75-1) {\scriptsize{}0};
	\node at (-5.2+.2,.33) {\scriptsize{}2};
	\node at (-5.2+.2,-.33) {\scriptsize{}2};
	\node at (-4.7+.2,.5+.33) {\scriptsize{}3};
	\node at (-4.7+.2,.5+-.33) {\scriptsize{}1};
	\node at (-4.7+.2,-.5+.33) {\scriptsize{}0};
	\node at (-4.7+.2,-.5+-.33) {\scriptsize{}2};
	\node at (-4.2+.2,-1+.33) {\scriptsize{}3};
	\node at (-4.2+.2,-1+-.33) {\scriptsize{}5};
	\node at (-4.2+.2,1+.33) {\scriptsize{}4};
	\node at (-4.2+.2,1+-.33) {\scriptsize{}1};
	\node at (-4.2+.2,.33) {\scriptsize{}4};
	\node at (-4.2+.2,-.33) {\scriptsize{}5};
	\node at (-3.7+.2,0.5+.33) {\scriptsize{}3};
	\node at (-3.7+.2,0.5+-.33) {\scriptsize{}0};
	\node at (-3.7+.2,-0.5+.33) {\scriptsize{}2};
	\node at (-3.7+.2,-0.5+-.33) {\scriptsize{}4};
	\node at (-3.2+.2,0+.33) {\scriptsize{}4};
	\node at (-3.2+.2,0+-.33) {\scriptsize{}2};
  \foreach \y/\lab in {-3/{y_3},-2/{y_2},-1/{y_1}}
    {\node[anchor=east,font=\footnotesize] at (2.88,\y) {$\lab$};}
  \foreach \y/\lab in {3/{x_3},2/{x_2},1/{x_1}}
    {\node[anchor=east,font=\footnotesize] at (2.88,\y) {$\lab$};}
\end{tikzpicture}
\caption{An example of a configuration in the final step of the
Yang--Baxter moves in the proof of the Cauchy identity (\Cref{prop:Cauchy_identity}).
The numbers along the edges indicate occupations. The framed numbers on the vertices
indicate $\min(i_2,j_2)$, which enter the weights $\mathbb{R}_{x,y}$ \eqref{eq:bbR}.
The edge occupations are uniquely determined by the framed numbers, thanks to the
arrow preservation.
The framed integers form a nonnegative integer matrix (the Robinson--Schensted--Knuth
input matrix of \Cref{prop:train_is_rsk}), and the arrows recorded by the edge
occupations are its Viennot shadow lines \cite{viennot1977forme}, in the generalization
from permutation matrices to arbitrary $\mathbb{Z}_{\ge0}$-matrices \cite{prasad2015representation}.}
\label{fig:crosses_left}
\end{figure}

\subsection{Summary: Three kinds of Schur vertex models}

The Schur polynomial has now appeared as the partition function of three vertex
models: the five-vertex model (\Cref{subsec:model_I}),
the higher spin vertex model with capacity one of the horizontal edges (\Cref{rem:higher_spin}),
and the vertical-strip model (\Cref{subsec:model_II}) with arbitrary capacities at the edges,
but with the restriction like $j_2\le i_1$.
Each of the three classes of models is a member of a hierarchy of solvable lattice models:
\begin{enumerate}[label=$\bullet$]
  \item The five-vertex model (\Cref{subsec:model_I}) sits inside the
    \emph{free fermion six-vertex model} (\Cref{rem:ff6v},
		\cite{ABPW2021free}, \cite{Naprienko2023}).
		Furthermore, dropping the free fermion condition,
		we arrive at partition functions of general six-vertex models.
		Notably, this family also includes symmetric Grothendieck polynomials
		\cite{Motegi-Sakai13}.

  \item The higher spin vertex model (\Cref{rem:higher_spin}) was studied in connection
    with symmetric functions \cite{Borodin2014vertex}, and then in connection with
    particle systems \cite{CorwinPetrov2015}, \cite{BorodinPetrov2016inhom}.
    This lifts the Schur polynomials to the Hall--Littlewood polynomials and further to the spin Hall--Littlewood
    symmetric rational functions.

  \item The vertical-strip model (\Cref{subsec:model_II}) lifts the Schur polynomials
    to the $q$-Whittaker polynomials \cite[\S6]{korff2013cylindric} and further to the
    spin $q$-Whittaker polynomials
    \cite{BorodinWheelerSpinq}, \cite{MucciconiPetrov2020},
		\cite{BorodinKorotkikh2021inhom}, \cite{korotkikh2024representation}.
\end{enumerate}

Despite the differences, these deformations are essentially instances of one
general fully fused higher spin six-vertex model (connected to the quantum group $U_q(\widehat{\mathfrak{sl}}_2)$).
We will not describe this general model in full generality, but in \Cref{sec:qdef} below we will
discuss some of the deformations of the higher-spin and vertical-strip models, and connect them together
via the fusion procedure.

\section{Deriving RSK from the Yang--Baxter equation}
\label{sec:rsk}

We now match, term by term, the two sides of
the YBE for the Schur vertex model that we used in
\Cref{subsec:ybe_cauchy}. We will see that in this case the summands
admit a unique weight-preserving matching, and identify it with
the classical Robinson--Schensted--Knuth (RSK) row insertion algorithm.
This material first appeared in \cite[\S5]{MucciconiPetrov2020}.
First, we recall the RSK in the toggle form.

\subsection{RSK via toggles}
\label{subsec:rsk_toggles}

Classical RSK can be built entirely from local moves \cite{Pak2001hook}, 
\cite{hopkins2014rsk}, which we now explain. First, recall Fomin's
\emph{growth-diagram} form of RSK \cite{Fomin1986}, \cite{fomin1995schensted}.
Instead of building tableaux by insertion, one records the algorithm as a labeling
of the vertices of an $N\times N$ grid by partitions:
the partitions at adjacent vertices interlace (that is, differ by a horizontal strip),
and the nonnegative integer entry of the input matrix is placed inside the
corresponding unit cell. The two output tableaux $P$ and $Q$ are then read off the
chains of partitions along the top and the right boundaries of the grid.
See \Cref{fig:growth_diagram} for an illustration of the growth diagram.
A complete worked example --- the insertion algorithm running in parallel
with the corresponding vertex model states --- is presented in
\Cref{fig:rsk_full_example} at the end of this section.

\begin{figure}[ht]
\centering
\begin{tikzpicture}[pp/.style={fill=white,inner sep=1pt,font=\scriptsize},
    yy/.style={fill=white,inner sep=1.5pt}]
  \foreach \a in {0,1,2,3}{\draw[gridln] ({2.7*\a},0)--({2.7*\a},6.3);}
  \foreach \b in {0,1,2,3}{\draw[gridln] (0,{2.1*\b})--(8.1,{2.1*\b});}
  \node[gray!75,font=\small] at (1.35,1.05) {$2$};
  \node[gray!75,font=\small] at (4.05,1.05) {$1$};
  \node[gray!75,font=\small] at (6.75,1.05) {$1$};
  \node[gray!75,font=\small] at (1.35,3.15) {$0$};
  \node[gray!75,font=\small] at (4.05,3.15) {$4$};
  \node[gray!75,font=\small] at (6.75,3.15) {$0$};
  \node[gray!75,font=\small] at (1.35,5.25) {$3$};
  \node[gray!75,font=\small] at (4.05,5.25) {$2$};
  \node[gray!75,font=\small] at (6.75,5.25) {$2$};
  \foreach \p in {(0,0),(2.7,0),(5.4,0),(8.1,0),(0,2.1),(0,4.2),(0,6.3)}
    {\node[pp] at \p {$\varnothing$};}
  \node[yy] at (2.7,2.1) {\yd{0.15}{2}};
  \node[yy] at (5.4,2.1) {\yd{0.15}{3}};
  \node[yy] at (8.1,2.1) {\yd[downcol]{0.15}{4}};
  \node[yy] at (2.7,4.2) {\yd{0.15}{2}};
  \node[yy] at (5.4,4.2) {\yd{0.15}{7}};
  \node[yy] at (8.1,4.2) {\yd[downcol]{0.15}{7,1}};
  \node[yy] at (2.7,6.3) {\yd[upcol]{0.15}{5}};
  \node[yy] at (5.4,6.3) {\yd[upcol]{0.15}{9,3}};
  \node[yy] at (8.1,6.3) {\yd{0.15}{11,3,1}};
  \node[upcol,font=\small] at (4.05,6.85) {$P$};
  \node[downcol,font=\small] at (8.95,3.15) {$Q$};
\end{tikzpicture}

\vspace{0.75cm}

$P=\;$\begin{tikzpicture}[scale=0.42,baseline={(current bounding box.center)},font=\tiny,line width=0.3pt]
  \foreach \c in {1,...,11}{\draw (\c-1,0) rectangle (\c,-1);}
  \foreach \c in {1,2,3}{\draw (\c-1,-1) rectangle (\c,-2);}
  \draw (0,-2) rectangle (1,-3);
  \foreach \c in {1,...,5}{\node at (\c-0.5,-0.5){$1$};}
  \foreach \c in {6,...,9}{\node at (\c-0.5,-0.5){$2$};}
  \foreach \c in {10,11}{\node at (\c-0.5,-0.5){$3$};}
  \foreach \c in {1,2,3}{\node at (\c-0.5,-1.5){$2$};}
  \node at (0.5,-2.5){$3$};
\end{tikzpicture}
\qquad
$Q=\;$\begin{tikzpicture}[scale=0.42,baseline={(current bounding box.center)},font=\tiny,line width=0.3pt]
  \foreach \c in {1,...,11}{\draw (\c-1,0) rectangle (\c,-1);}
  \foreach \c in {1,2,3}{\draw (\c-1,-1) rectangle (\c,-2);}
  \draw (0,-2) rectangle (1,-3);
  \foreach \c in {1,...,4}{\node at (\c-0.5,-0.5){$1$};}
  \foreach \c in {5,6,7}{\node at (\c-0.5,-0.5){$2$};}
  \foreach \c in {8,...,11}{\node at (\c-0.5,-0.5){$3$};}
  \node at (0.5,-1.5){$2$};
  \foreach \c in {2,3}{\node at (\c-0.5,-1.5){$3$};}
  \node at (0.5,-2.5){$3$};
\end{tikzpicture}
\caption{Fomin's growth diagram. Each vertex carries a partition, drawn
as its Young diagram; the left and bottom boundaries are empty ($\varnothing$);
adjacent partitions interlace by a horizontal strip.
Each cell carries a nonnegative integer (the input matrix entry),
and the top-right partition is determined by the other three corners
and this integer via the local (toggle) rule \eqref{eq:toggle}.
The top and right boundaries give the output Young tableaux $P$ and $Q$, respectively.}
\label{fig:growth_diagram}
\end{figure}

The entire diagram is generated by a single \emph{local} (\emph{toggle}) \emph{rule} that 
updates the top-right corner of a cell (denoted by $\nu$)
from the other three corners (denoted by $\lambda,\mu,\varkappa$) and the integer inside the cell
(denoted by $k$):
\begin{equation*}
\begin{tikzpicture}[scale=1.5,baseline={(current bounding box.center)},font=\small,
    pp/.style={fill=white,inner sep=1pt,font=\scriptsize}]
  \draw[gridln] (0,0) rectangle (1,1);
  \node[gray!75,font=\scriptsize] at (0.5,0.5) {$k$};
  \node[pp] at (0,1) {$\lambda$};
  \node[pp] at (1,1) {$\nu$};
  \node[pp] at (0,0) {$\varkappa$};
  \node[pp] at (1,0) {$\mu$};
\end{tikzpicture}
\end{equation*}
The local rule assumes that $\mu,\lambda$ are fixed, and it is a bijection between the 
following two sets:
\begin{equation}
  \label{eq:fomin_sets}
  \{\varkappa : \varkappa\prec\lambda,\ \varkappa\prec\mu\}\times\mathbb{Z}_{\ge0}
  \;\longleftrightarrow\;
  \{\nu : \nu\succ\lambda,\ \nu\succ\mu\},
\end{equation}
which is defined by the ``toggle'' relations
\begin{equation}
  \label{eq:toggle}
  \nu_1 = k + \max(\lambda_1,\mu_1),
  \qquad
  \nu_{c+1} = \max(\lambda_{c+1},\mu_{c+1}) + \min(\lambda_c,\mu_c) - \varkappa_c
  \quad (c\ge1).
\end{equation}
In \Cref{fig:growth_diagram_example} at the end of this section, the roles
of $\varkappa,\lambda,\mu,\nu$ are marked in one cell of the growth diagram
of the running example of \Cref{fig:rsk_full_example}.

\begin{remark}
	Note that other weight-compatible local rules --- bijections between the two
	sets in \eqref{eq:fomin_sets} respecting the grading
	$|\nu|=|\lambda|+|\mu|-|\varkappa|+k$ ---
	would correspond to other versions of the RSK correspondence, for example,
	the column insertion version. We refer to \cite{fomin1995schensted},
	\cite{fomin1995schur} for further discussion.
\end{remark}

\subsection{The deterministic matching in the Schur case}
\label{subsec:schur_yb}

The Schur YBE \eqref{eq:ybe}
for the weights \eqref{eq:upright_cross}, \eqref{eq:downright_cross} and the
cross-vertex weights \eqref{eq:bbR} admits a \emph{deterministic} term-by-term
matching of the summands on its two sides.
First, note that, as in any YBE of the form \eqref{eq:ybe},
for 
fixed boundary conditions
occupations $i_1,i_2,i_3,j_1,j_2,j_3\in\mathbb Z_{\ge0}$, both
sides of \eqref{eq:ybe} vanish unless the global balance
$i_1+j_2+j_3=i_2+i_3+j_1$ holds. Under this balance condition,
path conservation at the three
vertices leaves a single free internal coordinate on each side,
\begin{equation}
	\label{eq:schur_yb_internal_variables}
  (k_1,k_2,k_3)=\bigl(k_1,\;i_2+k_1-i_1,\;i_3+j_1-k_1\bigr),
  \qquad
  (k_1',k_2',k_3')=\bigl(k_1',\;k_1'+j_2-j_1,\;j_3+i_1-k_1'\bigr),
\end{equation}
respectively.

\begin{lemma}
\label{lemma:schur_yb_bijection}
In the Yang--Baxter equation \eqref{eq:ybe} for the Schur vertex model,
with boundary occupations satisfying the balance
$i_1+j_2+j_3=i_2+i_3+j_1$,
the summands 
in the left- and right-hand sides are,
respectively,
\begin{equation}
\label{eq:schur_yb_summands}
  x^{j_2}y^{j_1}(xy)^{\min(k_1,k_2)}
  \qquad\text{and}\qquad
  x^{k_2'}y^{k_1'}(xy)^{\min(j_1,j_2)} .
\end{equation}
These are nonzero exactly on the integer intervals
\begin{equation}\label{eq:schur_yb_k1_bounds}
  \max(0,i_1-i_2)\le k_1\le \min(i_3,\,i_3+j_1-j_2),
  \qquad
  \max(0,j_1-j_2)\le k_1'\le \min(j_3,\,j_3+i_1-i_2),
\end{equation}
which contain the same number of points.

Moreover, the map
\begin{equation}
  \label{eq:schur_yb_bijection}
  k_1'\;=\;j_1-\min(j_1,j_2)+\min(k_1,k_2)
  \;=\;k_1+\max(0,j_1-j_2)-\max(0,i_1-i_2)
\end{equation}
is a weight-preserving bijection between the weights \eqref{eq:schur_yb_summands},
and it is the only one: within each of the two lists
\eqref{eq:schur_yb_summands} the monomials are pairwise distinct, so a
weight-preserving matching of the two lists has no freedom.
\end{lemma}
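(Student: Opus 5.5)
The plan is to compute everything explicitly from the weights \eqref{eq:upright_cross}, \eqref{eq:downright_cross}, \eqref{eq:bbR}, using the parametrization \eqref{eq:schur_yb_internal_variables} of the internal edges.

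\textbf{The summands.} On the left, the cross $\mathbb R_{x,y}(i_2,i_1;k_2,k_1)$ contributes $(xy)^{\min(k_1,k_2)}$. The vertex $W(k_3,k_2;j_3,j_2)$ contributes $x^{j_2}$ subject to $k_3\ge j_2$. The vertex $W^*(i_3,k_1;k_3,j_1)$ contributes $y^{j_1}$ subject to $k_3\ge j_1$. Their product is the first monomial in \eqref{eq:schur_yb_summands}. On the right, $W(i_3,i_2;k_3',k_2')$ gives $x^{k_2'}$ with $i_3\ge k_2'$, $W^*(k_3',i_1;j_3,k_1')$ gives $y^{k_1'}$ with $j_3\ge k_1'$, and the cross gives $(xy)^{\min(j_1,j_2)}$.

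\textbf{The ranges.} Substituting $k_2=i_2+k_1-i_1$ and $k_3=i_3+j_1-k_1$, the constraints $k_1,k_2\ge0$ give the lower bound $\max(0,i_1-i_2)$. The constraints $k_3\ge j_1$ and $k_3\ge j_2$ give $k_1\le i_3$ and $k_1\le i_3+j_1-j_2$; the condition $k_3\ge0$ is then automatic. The right-hand interval in \eqref{eq:schur_yb_k1_bounds} is obtained the same way from $k_1',k_2'\ge0$, $k_1'\le j_3$, and $k_2'\le i_3$; the balance relation converts $i_3-j_2+j_1$ into $j_3+i_1-i_2$.

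Equal cardinality amounts to checking
\[
\min(i_3,i_3+j_1-j_2)-\max(0,i_1-i_2)=\min(j_3,j_3+i_1-i_2)-\max(0,j_1-j_2).
\]
Write $i_3=j_3+i_1-i_2+j_2-j_1$, by balance. Then both sides become $j_3+\min(i_1-i_2,0)+\min(j_2-j_1,0)$ after using $\min(a,a+b)=a+\min(0,b)$ and $\max(0,c)=-\min(0,-c)$. I expect this bookkeeping with mins and maxes to be the only fiddly step. If intervals are empty, both sides are negative of the same size, and it still works.

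\textbf{The bijection.} Both lists are monomials $x^{a}y^{b}$ with $a-b$ constant on each side: $a-b=j_2-j_1$ on the left, and $a-b=k_2'-k_1'=j_2-j_1$ on the right. So each monomial is determined by its $y$-exponent, and it suffices to compare $y$-exponents.

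On the left, the $y$-exponent is $j_1+\min(k_1,k_2)=j_1+k_1+\min(0,i_2-i_1)$, a strictly increasing affine function of $k_1$. Hence the monomials are pairwise distinct. On the right, the $y$-exponent is $k_1'+\min(j_1,j_2)$, also strictly increasing, so those are distinct too.

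Setting the two $y$-exponents equal gives $k_1'=j_1-\min(j_1,j_2)+\min(k_1,k_2)$, which is the first expression in \eqref{eq:schur_yb_bijection}. Rewriting $j_1-\min(j_1,j_2)=\max(0,j_1-j_2)$ and $\min(k_1,k_2)=k_1-\max(0,i_1-i_2)$ gives the second expression. This is a shift that sends the lower endpoint of the first interval to the lower endpoint of the second. Since the intervals have equal length, it is a bijection between them, and it preserves weights by construction.

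\textbf{Uniqueness.} Suppose some weight-preserving matching exists. Each left monomial must be sent to a right summand with the same monomial. Since the right monomials are pairwise distinct, there is exactly one such summand, so the matching is forced to be \eqref{eq:schur_yb_bijection}.

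As a corollary, summing the matched monomials yields \Cref{prop:ybe}.
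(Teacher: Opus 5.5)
Your proposal is correct and follows essentially the same route as the paper: substitute the parametrization \eqref{eq:schur_yb_internal_variables} into the weights, read off the intervals from the occupation constraints, and use the strict monotonicity of an exponent in $k_1$ and $k_1'$ to get pairwise distinctness and hence uniqueness. You also check explicitly that the two intervals have equal length and that the shift \eqref{eq:schur_yb_bijection} carries the lower endpoint of one to the lower endpoint of the other; the paper asserts this bijection between the intervals without spelling out these details.
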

\begin{proof}
The formulas \eqref{eq:schur_yb_summands} are obtained by substituting 
\eqref{eq:schur_yb_internal_variables} into the vertex weights \eqref{eq:upright_cross}, \eqref{eq:downright_cross},
and \eqref{eq:bbR}.
The occupation bounds
$k_1,k_2\ge0$ and $k_3\ge\max(j_1,j_2)$ in \eqref{eq:upright_cross},
\eqref{eq:downright_cross} (and their primed counterparts) cut out the two
desired 
intervals 
\eqref{eq:schur_yb_k1_bounds}.
Finally, note that since 
$k_1'+\min(j_1,j_2)=j_1+\min(k_1,k_2)$ and $k_2'=k_1'+j_2-j_1$, 
the map
\eqref{eq:schur_yb_bijection} turns the right-hand
summand $x^{k_2'}y^{k_1'}(xy)^{\min(j_1,j_2)}$ into the left-hand one,
$x^{j_2}y^{j_1}(xy)^{\min(k_1,k_2)}$. 
Moreover, it is a bijection between the two intervals \eqref{eq:schur_yb_k1_bounds}.
For the uniqueness, observe that $\min(k_1,k_2)=k_1+\min(0,i_2-i_1)$ and
$k_2'=k_1'+j_2-j_1$, so the exponent of $xy$ in the left-hand summand and the
exponent of $y$ in the right-hand one are strictly increasing in $k_1$ and
$k_1'$, respectively. Hence the monomials within each list are pairwise
distinct, and a weight-preserving bijection between the lists is unique.
This completes the proof.
\end{proof}

\begin{remark}
	\label{rmk:matching_is_deterministic_bijectivization}
	As discussed in the Introduction, the rigidity in
	\Cref{lemma:schur_yb_bijection} is special to the Schur weights.
	\Cref{sec:bijectivization} develops the general coupling of the two sides
	of a YBE by Markov transitions; the matching
	\eqref{eq:schur_yb_bijection} is its deterministic case, the one with
	$\mathsf p^{\mathrm{fwd}},\mathsf p^{\mathrm{bwd}}\in\{0,1\}$ in
	\eqref{eq:bijectivization}.
\end{remark}

\begin{remark}
The bijection \eqref{eq:schur_yb_bijection} takes a cleaner form in terms of the
vertical arrow counts $k_3=i_3+j_1-k_1$, $k_3'=j_3+i_1-k_1'$: 
\begin{equation}
  \label{eq:schur_yb_bijection_k3}
  k_3'\;=\;k_3+\max(i_1,i_2)-\max(j_1,j_2).
\end{equation}
\end{remark}

\begin{figure}[ht]
\centering
\begin{tikzpicture}[>={Stealth[length=1.8mm]},font=\small,
    box/.style={draw,rounded corners=0.7pt,fill=white,inner sep=1.8pt,font=\scriptsize},
    rail/.style={gray!45,line width=0.5pt,densely dotted},
    baseline={(current bounding box.center)}]
  \fill[upcol!6,rounded corners=2pt] (-0.62,-0.45) rectangle (4.15,3.15);
  \foreach \y in {0,1.4,2.8}{\draw[rail] (-0.7,\y)--(10.4,\y);}
  \node[downcol,left] at (-0.75,2.8) {$\mu$};
  \node[left]         at (-0.75,1.4) {$\varkappa\!\to\!\nu$};
  \node[upcol,left]   at (-0.75,0)   {$\lambda$};
  \def\ca{1.0}\def\cb{3.1}\def\cc{5.2}\def\cd{7.3}\def\ce{9.4}
  \node[box] at (\ca,2.8){$\mu_{1}\!-\!\mu_{2}$};\node[box] at (\cb,2.8){$\mu_{2}\!-\!\mu_{3}$};
  \node[box] at (\cc,2.8){$\mu_{3}\!-\!\mu_{4}$};\node[box] at (\cd,2.8){$\mu_{4}\!-\!\mu_{5}$};
  \node[box] at (\ce,2.8){$\mu_{5}\!-\!\mu_{6}$};
  \node[box] at (\ca,0){$\lambda_{1}\!-\!\lambda_{2}$};\node[box] at (\cb,0){$\lambda_{2}\!-\!\lambda_{3}$};
  \node[box] at (\cc,0){$\lambda_{3}\!-\!\lambda_{4}$};\node[box] at (\cd,0){$\lambda_{4}\!-\!\lambda_{5}$};
  \node[box] at (\ce,0){$\lambda_{5}\!-\!\lambda_{6}$};
  \node[box] at (\ca,1.4){$\nu_{1}\!-\!\nu_{2}$};\node[box] at (\cb,1.4){$\nu_{2}\!-\!\nu_{3}$};
  \node[box] at (\cc,1.4){$\varkappa_{3}\!-\!\varkappa_{4}$};\node[box] at (\cd,1.4){$\varkappa_{4}\!-\!\varkappa_{5}$};
  \node[box] at (\ce,1.4){$\varkappa_{5}\!-\!\varkappa_{6}$};
  \draw[uppath,line width=1.3pt,opacity=0.35]   (-0.35,0)--(0.25,2.8);
  \draw[downpath,line width=1.3pt,opacity=0.35] (-0.35,2.8)--(0.25,0);
  \draw[uppath,line width=1.3pt,opacity=0.60]   (1.75,0)--(2.35,2.8);
  \draw[downpath,line width=1.3pt,opacity=0.60] (1.75,2.8)--(2.35,0);
  \draw[uppath,line width=1.3pt]   (3.85,0)--(4.45,2.8);
  \draw[downpath,line width=1.3pt] (3.85,2.8)--(4.45,0);
  \node[vtx] at (4.15,1.4){};
  \node[font=\scriptsize] at (4.15,3.16){$\mathbb R_{x,y}$};
  \draw[->,line width=0.8pt] (0.1,3.62)--(5.2,3.62);
  \node[above] at (2.65,3.62){$\mathsf p^{\mathrm{fwd}}$};
  \draw[->,gray!60!black,line width=0.7pt] (-0.55,1.4)--(-0.2,1.4);
  \node[gray!55!black,font=\scriptsize,above] at (-0.38,1.42){$k$};
  \node at (10.4,1.4){$\cdots$};
\end{tikzpicture}
\caption{Dragging the Yang--Baxter cross through two rows 
$\mu/\varkappa$ (top) and $\lambda/\varkappa$ (bottom) of a growth diagram.
Each vertical edge carries a column
multiplicity $\square_c-\square_{c+1}$.
The middle row is rewritten
$\varkappa\mapsto\nu$ (the shaded columns are already updated). 
The only free input is
$k\in\mathbb Z_{\ge0}$ at the first column,
which corresponds to the state of the cross
vertex $\mathbb{R}_{x,y}$ to the left of the
zeroth column (the simplest example is in \Cref{fig:Cauchy_proof}, (c)).
We discuss this free input in detail at the end of the proof of \Cref{prop:train_is_rsk}.
The arrow marks the direction of the drag; its label $\mathsf p^{\mathrm{fwd}}$
is the forward transition probability of \Cref{sec:bijectivization}, which for
the Schur weights takes only the values $0$ and $1$.}
\label{fig:rsk_dragging}
\end{figure}

\begin{proposition}
\label{prop:train_is_rsk}
Dragging the cross \eqref{eq:bbR} rightward across the two rows $\lambda/\varkappa$
and $\mu/\varkappa$ of \Cref{fig:rsk_dragging}, and rewriting the middle rail
$\varkappa\mapsto\nu$ one column at a time by the deterministic move of
\Cref{lemma:schur_yb_bijection}, reproduces the local rule \eqref{eq:toggle}.
Consequently, the Yang--Baxter moves over the $N\times N$ grid 
(\Cref{fig:growth_diagram}) produce the Robinson--Schensted--Knuth 
correspondence.\footnote{This RSK correspondence is the most classical one,
based on row insertions. It would be interesting to see if the other three bijections 
(column insertion, dual row insertion, dual column insertion) can be derived from the Yang--Baxter equation in a similar way.}
\end{proposition}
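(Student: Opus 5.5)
The plan is to run the train of Yang--Baxter moves column by column, and to read off, at each step, what the deterministic move of \Cref{lemma:schur_yb_bijection} does to the middle rail in the column-length coordinates \eqref{eq:vertical_strip_coordinates_dualization}.

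\emph{Setting up the dictionary.} In the configuration of \Cref{fig:rsk_dragging}, the vertical edges at column $c$ carry the multiplicities $\lambda_c-\lambda_{c+1}$, $\varkappa_c-\varkappa_{c+1}$ (or $\nu_c-\nu_{c+1}$ once updated) and $\mu_c-\mu_{c+1}$. By the discussion preceding \Cref{lemma:vertical_strip_condition}, conjugated as in \Cref{rmk:conjugate_coordinates}, the horizontal edge of a row entering column $c$ carries the difference of the $c$-th parts of its two boundary partitions. Before the cross passes column $c$, the two horizontal edges entering the lattice there are therefore $\lambda_c-\varkappa_c$ and $\mu_c-\varkappa_c$. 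After the cross has passed, the rows are interchanged, so the corresponding edges carry $\nu_c-\lambda_c$ and $\nu_c-\mu_c$. Both pairs are nonnegative precisely because of the interlacings in \eqref{eq:fomin_sets}. The state of the cross between columns $c-1$ and $c$ is then forced by conservation.

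I would match these edges to the six boundary labels $i_1,i_2,i_3,j_1,j_2,j_3$ of \Cref{fig:ybe} at column $c$. This bookkeeping is the step I expect to be the main obstacle: the orientation of $W^*$, namely which of its vertical ends is ``in'', must be tracked carefully.

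\emph{One column at a time.} With the dictionary fixed, I would apply the cleaner form \eqref{eq:schur_yb_bijection_k3}, $k_3'=k_3+\max(i_1,i_2)-\max(j_1,j_2)$, which updates the middle vertical occupation from $\varkappa_c-\varkappa_{c+1}$ to the new one. Here $\max(i_1,i_2)$ and $\max(j_1,j_2)$ become, after adding back the common part $\varkappa$, the quantities $\max(\lambda_{c},\mu_{c})$ and $\max(\lambda_{c+1},\mu_{c+1})$, up to the index shift. I would then proceed by induction on $c$ with the hypothesis that the horizontal data emerging to the right of the cross equal $\nu_c-\lambda_c$ and $\nu_c-\mu_c$, where $\nu$ is given by \eqref{eq:toggle}. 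Summing the updates telescopically, each new part should come out as
\[
\nu_{c+1}=\max(\lambda_{c+1},\mu_{c+1})+\min(\lambda_c,\mu_c)-\varkappa_c ,
\]
using the identity $a+b=\max(a,b)+\min(a,b)$. The admissibility intervals \eqref{eq:schur_yb_k1_bounds} should translate exactly into $\nu\succ\lambda$, $\nu\succ\mu$ on one side and $\varkappa\prec\lambda$, $\varkappa\prec\mu$ on the other, so the column-by-column bijections glue into the bijection \eqref{eq:fomin_sets}.

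\emph{The first column and the global statement.} The first column is special. The cross starts to the left of column $0$, where the infinite boundary of \Cref{rmk:infinite_boundary_at_column_0} makes its outgoing state free. Its free parameter $\min(i_2,j_2)=k\in\mathbb Z_{\ge0}$, as in \eqref{eq:R_row_sum} and \Cref{fig:crosses_left}, is the cell entry, and I would check that it gives $\nu_1=k+\max(\lambda_1,\mu_1)$. Far to the right, all occupations vanish, so the train terminates with the cross empty, consistent with the initial insertion of empty crosses in the Cauchy argument. Finally, the $N^2$ moves of \Cref{fig:Cauchy_proof} perform exactly one such train per cell of the $N\times N$ grid, in an order compatible with the growth diagram. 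The resulting labeling therefore satisfies the toggle rule in every cell, with empty left and bottom boundaries. By Fomin's theorem \cite{Fomin1986}, \cite{fomin1995schensted}, this labeling is the row-insertion RSK correspondence.
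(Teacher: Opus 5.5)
Your plan follows the paper's route: a column-$c$ dictionary substituted into \eqref{eq:schur_yb_bijection_k3}, a separate treatment of the reservoir column, then Fomin's theorem. There is a genuine gap at the central step. You leave the dictionary undone, and the one identification you do state is wrong.

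You write that $\max(i_1,i_2)$ becomes $\max(\lambda_c,\mu_c)$ after adding back $\varkappa$. That treats the edges $\lambda_c-\varkappa_c$, $\mu_c-\varkappa_c$ as the incoming legs of the cross. In the left-hand side of \eqref{eq:ybe} they are its \emph{outgoing} legs $k_1,k_2$. The incoming legs come from the already-swapped columns to its left and carry $i_1=\nu_c-\mu_c$, $i_2=\nu_c-\lambda_c$, so $\max(i_1,i_2)=\nu_c-\min(\lambda_c,\mu_c)$.

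With your assignment, \eqref{eq:schur_yb_bijection_k3} reads $\nu_c-\nu_{c+1}=\max(\lambda_c,\mu_c)-\max(\lambda_{c+1},\mu_{c+1})$. Telescoping from $\nu_1=k+\max(\lambda_1,\mu_1)$ then gives $\nu_{c+1}=k+\max(\lambda_{c+1},\mu_{c+1})$, which has lost all dependence on $\varkappa$. In the example of \Cref{subsec:intro_worked_example} it would return $\nu_2=6$ for all three admissible $\varkappa_1$, instead of $\nu_2=10-\varkappa_1$. That is neither the toggle nor a bijection. Your induction hypothesis has the right values $\nu_c-\lambda_c$, $\nu_c-\mu_c$ but places them ``to the right of the cross'', which is the same misplacement. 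The obstacle you anticipated concerns which side of the cross holds the swapped rows, not the orientation of $W^*$.

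The repair is to write out the full dictionary at column $c\ge1$:
\begin{itemize}
\item vertical edges: $i_3=\lambda_c-\lambda_{c+1}$, $j_3=\mu_c-\mu_{c+1}$, $k_3=\varkappa_c-\varkappa_{c+1}$, $k_3'=\nu_c-\nu_{c+1}$;
\item cross inputs: $i_1=\nu_c-\mu_c$, $i_2=\nu_c-\lambda_c$;
\item outputs: $j_1=\lambda_{c+1}-\varkappa_{c+1}$, $j_2=\mu_{c+1}-\varkappa_{c+1}$, so $\max(j_1,j_2)=\max(\lambda_{c+1},\mu_{c+1})-\varkappa_{c+1}$.
\end{itemize}
Substituting into \eqref{eq:schur_yb_bijection_k3}, $\nu_c$ and $\varkappa_{c+1}$ cancel at once, and the toggle for $\nu_{c+1}$ drops out in a single step. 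No telescoping is involved. The move at column $c$ consumes $\nu_c$ (produced at column $c-1$ as $k_1',k_2'$) and produces $\nu_{c+1}$; that is the correct form of your induction.

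The identity $a+b=\max(a,b)+\min(a,b)$ belongs instead to the reservoir column. There, matching the exponents of $x$ in \Cref{fig:rsk_dragging_0} gives $\nu_1-\lambda_1=k+\mu_1-\min(\lambda_1,\mu_1)$, that is, $\nu_1=k+\max(\lambda_1,\mu_1)$. The rest of your outline (one train per cell, then Fomin's theorem) is fine once this step is fixed.
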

\begin{proof}
Fix a column $c\ge1$. The three rails give the vertical occupations
\begin{equation}
	\label{eq:rsk_vertical_dictionary}
  i_3=\lambda_c-\lambda_{c+1},\quad
  j_3=\mu_c-\mu_{c+1},\quad
  k_3=\varkappa_c-\varkappa_{c+1},\quad
  k_3'=\nu_c-\nu_{c+1},
\end{equation}
the middle edge reading $\varkappa$ ahead of the cross and $\nu$ behind it. Each
horizontal leg carries the number of paths crossing it, a partial sum of the
strip multiplicities: the two legs entering the cross carry
$i_1=\nu_c-\mu_c$ and $i_2=\nu_c-\lambda_c$, and the two leaving it carry
$j_1=\lambda_{c+1}-\varkappa_{c+1}$ and $j_2=\mu_{c+1}-\varkappa_{c+1}$,
all nonnegative since $\varkappa\prec\lambda$, $\varkappa\prec\mu$,
$\lambda\prec\nu$, and $\mu\prec\nu$.
Hence
\begin{equation}
	\label{eq:rsk_horizontal_maxima}
  \max(i_1,i_2)=\nu_c-\min(\lambda_c,\mu_c),
  \qquad
  \max(j_1,j_2)=\max(\lambda_{c+1},\mu_{c+1})-\varkappa_{c+1}.
\end{equation}
Substituting \eqref{eq:rsk_vertical_dictionary}--\eqref{eq:rsk_horizontal_maxima}
into the $k_3$-identity \eqref{eq:schur_yb_bijection_k3} and cancelling $\nu_c$ and
$\varkappa_{c+1}$ gives
\begin{equation*}
  \nu_{c+1}=\max(\lambda_{c+1},\mu_{c+1})+\min(\lambda_c,\mu_c)-\varkappa_c,
\end{equation*}
which is precisely the toggle \eqref{eq:toggle} for $c\ge1$.

The case $c=0$ is slightly different, because here 
each Yang--Baxter cross carries the free input (an element of the integer input matrix for the RSK). 
This input matrix is encoded by the 
$N\times N$ grid as in \Cref{fig:growth_diagram}. 
Assume that we need to insert the cross 
$\mathbb{R}_{x_a,y_b}$, $1\le a,b\le N$, through the 
zeroth column of the grid which carries infinitely many vertical paths. 
Let this cross carry the parameter $k_{ab}\in\mathbb Z_{\ge0}$.
Assume that the input arrows to $\mathbb{R}_{x_a,y_b}$ are $i_1$ and $i_2$, see
\Cref{fig:rsk_dragging_0}, and $k_{ab}=\min(k_1,k_2)$.
This means that 
\begin{equation*}
	k_1=k_{ab}+\max(0,i_1-i_2),\qquad k_2=k_{ab}+\max(0,i_2-i_1).
\end{equation*}

\begin{figure}[htpb]
\centering
\begin{tikzpicture}[scale=0.85,line cap=round,
    baseline={(current bounding box.center)},font=\small,
    every node/.style={inner sep=1.4pt}]
  \def\lw{2pt}
  \draw[uppath,line width=\lw,ybeflow]  (-1.2,-0.9)--(0.2,0);
  \draw[uppath,line width=\lw,ybeflowb] (0.2,0)--(1.6,0.9);
  \draw[uppath,line width=\lw,ybeflow]  (1.6,0.9)--(3.0,0.9);
  \draw[downpath,line width=\lw,ybeflow]  (-1.2,0.9)--(0.2,0);
  \draw[downpath,line width=\lw,ybeflowb] (0.2,0)--(1.6,-0.9);
  \draw[downpath,line width=\lw,ybeflow]  (1.6,-0.9)--(3.0,-0.9);
  \draw[downpath,line width=\lw,ybeflow] (1.6,-0.9)--(1.6,-2.0);
  \draw[downpath,line width=\lw,ybeflow] (1.6,0)--(1.6,-0.9);
  \draw[uppath,line width=\lw,ybeflow]   (1.6,0)--(1.6,0.9);
  \draw[uppath,line width=\lw,ybeflow]   (1.6,0.9)--(1.6,2.0);
  \node[circle,fill=black,inner sep=0pt,minimum size=3pt] at (0.2,0){};
  \node[circle,fill=black,inner sep=0pt,minimum size=3pt] at (1.6,0.9){};
  \node[circle,fill=black,inner sep=0pt,minimum size=3pt] at (1.6,-0.9){};
  \node[left]  at (-1.2,0.9){$i_1$};
  \node[left]  at (-1.2,-0.9){$i_2$};
  \node[above] at (1.6,2.0){$\infty$};
  \node[below] at (1.6,-2.0){$\infty$};
  \node[right] at (3.2,0.9){$\mu_1-\varkappa_1$};
  \node[right] at (3.2,-0.9){$\lambda_1-\varkappa_1$};
  \node at (0.73,0.72){$k_2$};
  \node at (0.73,-0.72){$k_1$};
  \node[right] at (1.70,-0.04){$\infty$};
\end{tikzpicture}
\qquad$=$\qquad
\begin{tikzpicture}[scale=0.85,line cap=round,
    baseline={(current bounding box.center)},font=\small,
    every node/.style={inner sep=1.4pt}]
  \def\lw{2pt}
  \draw[uppath,line width=\lw,ybeflow]  (-1.2,-0.9)--(0,-0.9);
  \draw[uppath,line width=\lw,ybeflowb] (0,-0.9)--(1.5,0);
  \draw[uppath,line width=\lw,ybeflow]  (1.5,0)--(3.0,0.9);
  \draw[downpath,line width=\lw,ybeflow]  (-1.2,0.9)--(0,0.9);
  \draw[downpath,line width=\lw,ybeflowb] (0,0.9)--(1.5,0);
  \draw[downpath,line width=\lw,ybeflow]  (1.5,0)--(3.0,-0.9);
  \draw[uppath,line width=\lw,ybeflow]   (0,-2.0)--(0,-0.9);
  \draw[uppath,line width=\lw,ybeflow]   (0,-0.9)--(0,0);
  \draw[downpath,line width=\lw,ybeflow] (0,0.9)--(0,0);
  \draw[downpath,line width=\lw,ybeflow] (0,2.0)--(0,0.9);
  \node[circle,fill=black,inner sep=0pt,minimum size=3pt] at (0,0.9){};
  \node[circle,fill=black,inner sep=0pt,minimum size=3pt] at (0,-0.9){};
  \node[circle,fill=black,inner sep=0pt,minimum size=3pt] at (1.5,0){};
  \node[left]  at (-1.2,0.9){$i_1$};
  \node[left]  at (-1.2,-0.9){$i_2$};
  \node[above] at (0,2.0){$\infty$};
  \node[below] at (0,-2.0){$\infty$};
  \node[right] at (3.2,0.9){$\mu_1-\varkappa_1$};
  \node[right] at (3.2,-0.9){$\lambda_1-\varkappa_1$};
  \node at (0.91,0.72){$k_1'$};
  \node at (0.91,-0.72){$k_2'$};
  \node[left] at (-0.24,0.0){$\infty$};
\end{tikzpicture}
\caption{Dragging the cross through the zeroth column.}
\label{fig:rsk_dragging_0}
\end{figure}

After dragging the cross, we have $k_1'=\nu_1-\mu_1$
and $k_2'=\nu_1-\lambda_1$. Comparing the weights on the left and on the right
in \Cref{fig:rsk_dragging_0}, we see that 
\begin{equation*}
	\nu_1=k_{ab}+\max(\lambda_1,\mu_1),
\end{equation*}
which does not depend on the incoming occupations $i_1,i_2$.

Thus, the data $k_{ab}=\min(k_1,k_2)$ at each cross $\mathbb{R}_{x_a,y_b}$ 
in \Cref{fig:crosses_left} is exactly the RSK input matrix.
This completes the proof of \Cref{prop:train_is_rsk}.
\end{proof}

\Cref{fig:rsk_biword} fixes the input matrix $M$ of the running example and
its biword. \Cref{fig:rsk_full_example} illustrates the resulting
correspondence step by
step, with the bumping cascades on the RSK side matched to the path hops on
the seam of the vertex model. \Cref{fig:growth_diagram_example} displays the
growth diagram of the same input matrix.

\begin{figure}[ht]
\centering
\includegraphics{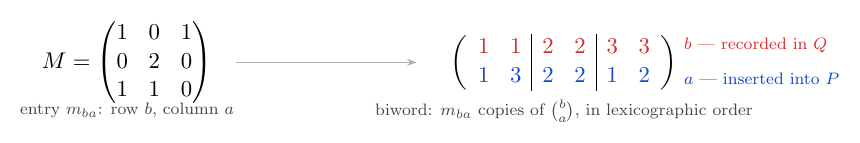}
\caption{The input matrix $M$ of the running example and its biword. The top
row of the biword (the $b$'s, red) is recorded in $Q$, and the bottom row
(the $a$'s, blue) is inserted into $P$.}
\label{fig:rsk_biword}
\end{figure}

\begin{figure}[ht]
\centering
\begin{tikzpicture}[pp/.style={fill=white,inner sep=1pt,font=\scriptsize},
    yy/.style={fill=white,inner sep=1pt},
    corner/.style={font=\tiny,gray!50!black}]
  \fill[gray!12] (1.37,1.37) rectangle (2.38,2.38);
  \foreach \a in {0,1,2,3}{\draw[gridln] ({1.25*\a},0)--({1.25*\a},3.75);}
  \foreach \b in {0,1,2,3}{\draw[gridln] (0,{1.25*\b})--(3.75,{1.25*\b});}
  \node[gray!75,font=\scriptsize] at (0.625,0.625) {$1$};
  \node[gray!75,font=\scriptsize] at (1.875,0.625) {$0$};
  \node[gray!75,font=\scriptsize] at (3.125,0.625) {$1$};
  \node[gray!75,font=\scriptsize] at (0.625,1.875) {$0$};
  \node[gray!75,font=\scriptsize] at (1.875,1.875) {$2$};
  \node[gray!75,font=\scriptsize] at (3.125,1.875) {$0$};
  \node[gray!75,font=\scriptsize] at (0.625,3.125) {$1$};
  \node[gray!75,font=\scriptsize] at (1.875,3.125) {$1$};
  \node[gray!75,font=\scriptsize] at (3.125,3.125) {$0$};
  \node[corner] at (1.48,1.44) {$\varkappa$};
  \node[corner] at (1.48,2.31) {$\lambda$};
  \node[corner] at (2.27,1.44) {$\mu$};
  \node[corner] at (2.27,2.31) {$\nu$};
  \foreach \p in {(0,0),(1.25,0),(2.5,0),(3.75,0),(0,1.25),(0,2.5),(0,3.75)}
    {\node[pp] at \p {$\varnothing$};}
  \node[yy] at (1.25,1.25) {\yd{0.15}{1}};
  \node[yy] at (2.5,1.25) {\yd{0.15}{1}};
  \node[yy] at (3.75,1.25) {\yd[downcol]{0.15}{2}};
  \node[yy] at (1.25,2.5) {\yd{0.15}{1}};
  \node[yy] at (2.5,2.5) {\yd{0.15}{3}};
  \node[yy] at (3.75,2.5) {\yd[downcol]{0.15}{3,1}};
  \node[yy] at (1.25,3.75) {\yd[upcol]{0.15}{2}};
  \node[yy] at (2.5,3.75) {\yd[upcol]{0.15}{4,1}};
  \node[yy] at (3.75,3.75) {\yd{0.15}{4,1,1}};
  \node[upcol,font=\small] at (1.875,4.2) {$P$};
  \node[downcol,font=\small] at (4.45,1.875) {$Q$};
\end{tikzpicture}
\caption{The growth diagram for the $3\times3$ input matrix of
\Cref{fig:rsk_biword}; the top and right boundary chains are the
$P$- and $Q$-chains. In the shaded cell, the corners play the roles
$\varkappa=(1)$, $\lambda=(1)$, $\mu=(1)$ in the local rule
\eqref{eq:toggle} with $k=2$, which yields $\nu=(3)$.}
\label{fig:growth_diagram_example}
\end{figure}

\begin{figure}[p]
\centering
\includegraphics[width=\textwidth,height=0.8\textheight,keepaspectratio]{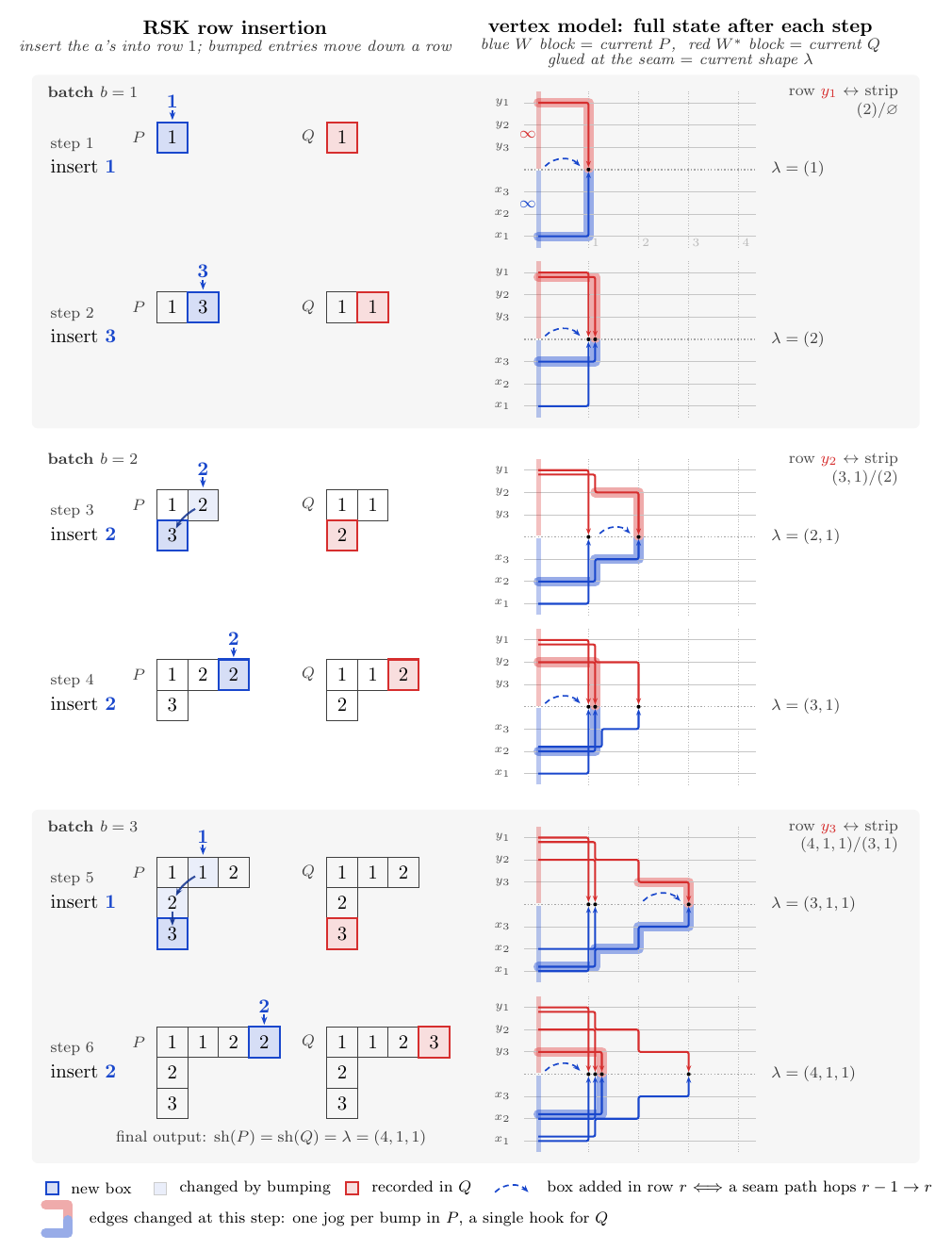}
\caption{RSK insertion and the Schur vertex model running in parallel on the
input matrix $M$ of \Cref{fig:rsk_biword}. After each step, the blue block (rows $x_a$, weights $W$
\eqref{eq:upright_cross}) encodes the current $P$, and the red block (rows
$y_b$, weights $W^{*}$ \eqref{eq:downright_cross}) encodes the current $Q$.
The blocks are glued along the seam carrying the common shape $\lambda$ in the column
coordinates \eqref{eq:vertical_strip_coordinates_dualization}. A new box in
row $r$ of $\lambda$ moves one seam path from column $r-1$ to column $r$
(dashed arcs). Shaded corridors mark the edges that change at each step ---
the trail of the cross $\mathbb{R}_{x_a,y_b}$ being dragged in the proof of
\Cref{prop:train_is_rsk}: the blue corridor enters at row $x_a$ and jogs
once per entry bumped in $P$, while the red corridor is a single hook at
row $y_b$, as $Q$ only gains one box.}
\label{fig:rsk_full_example}
\end{figure}

\section{The combinatorial 3D $R$ and the tetrahedron equation}
\label{sec:3dR}

The matching of \Cref{lemma:schur_yb_bijection} is the
only weight-preserving bijection between the summands of
the two sides of \eqref{eq:ybe}. A reader who has met
\emph{set-theoretic} solutions of the Yang--Baxter
equation will ask whether this matching is one of them.
In this section we answer this question. As a map on the
arrow occupations of a single cross, the Schur matching
is not a binary Yang--Baxter map at all. In three
coordinates read off the monomial weights, however, it
becomes the standard \emph{combinatorial
three-dimensional $R$}, a map of triples whose
consistency equation is the Zamolodchikov tetrahedron
equation --- one dimension above the Yang--Baxter equation
\eqref{eq:ybe}. Moreover, we also explain how ordinary
set-theoretic Yang--Baxter maps reappear from this map of
triples after a periodic closure, see
\Cref{subsubsec:third_periodic} below.

\subsection{Is the Schur matching a set-theoretic Yang--Baxter map?}
\label{subsec:set_theoretic_question}

We refer to
\cite{ReshetikhinVeselov2005PoissonLieYB} and
\cite{KouloukasPapageorgiou2011Entwining} for background
on set-theoretic Yang--Baxter maps which are defined as follows.

\begin{definition}
\label{def:set_theoretic_yb_map}
Let $X$ be a set. A \emph{set-theoretic Yang--Baxter map} is a map
$r\colon X\times X\to X\times X$ satisfying the braid relation
\begin{equation}
  \label{eq:set_theoretic_ybe}
  r_{12}\,r_{23}\,r_{12}=r_{23}\,r_{12}\,r_{23}
  \qquad\text{on }X^3,
\end{equation}
where $r_{ab}$ applies $r$ to the factors $a$ and $b$ and acts as the
identity on the remaining one.
\end{definition}

\begin{remark}
The references above write 
\eqref{eq:set_theoretic_ybe}
in the ``quantum form''
$R_{12}R_{13}R_{23}=R_{23}R_{13}R_{12}$. The braid form
\eqref{eq:set_theoretic_ybe} used here corresponds to it via $r=P\circ R$,
where $P$ is the flip of the two factors.
\end{remark}

\begin{definition}
\label{def:colored_set_theoretic_yb_map}
There exists a colored version of \Cref{def:set_theoretic_yb_map},
in which
one is given sets
$X_l$ and maps $r_{l,m}\colon X_l\times X_m\to X_m\times X_l$ satisfying the
evident analogue of \eqref{eq:set_theoretic_ybe} on
$X_k\times X_l\times X_m$.
\end{definition}

There are two natural
candidates for a binary map
coming out of the Schur matching described in
\Cref{sec:rsk}. Neither of them is a set-theoretic Yang--Baxter map:
\begin{enumerate}[label=$\bullet$]
\item The cross vertex $\mathbb R_{x,y}$ itself, read as a
  map from its two incoming occupations to its two
	outgoing ones is not a function, since it assigns weights to 
	multiple outgoing states for a single incoming state.
\item The deterministic matching
  \eqref{eq:schur_yb_bijection} of
  \Cref{lemma:schur_yb_bijection}, which sends the free
  internal occupation $k_1$ on the left-hand side of the
  YBE to the free internal occupation $k_1'$ on the
  right-hand side, for fixed boundary occupations. It is a
  bijection for each fixed boundary, but not a map on edge
  states.
\end{enumerate}
However,
we do not claim that no other packaging of the Schur
matching as a set-theoretic Yang--Baxter map exists. 
Instead,
in the next \Cref{subsec:hidden_3d_r}
we explain how the Schur matching possesses a
related three-dimensional structure.

\subsection{The hidden three-dimensional $R$}
\label{subsec:hidden_3d_r}

The three coordinates to keep 
for the desired three-dimensional structure
are the ones that carry the monomial exponents in
\eqref{eq:schur_yb_summands}. Set
\begin{equation}
  \label{eq:3d_R_coordinates}
  a=j_1,\qquad h=\min(k_1,k_2),\qquad c=j_2,\qquad m=\min(j_1,j_2)=\min(a,c),
\end{equation}
so that the left-hand summand is $x^{c}y^{a}(xy)^{h}$ and the right-hand one
is $x^{k_2'}y^{k_1'}(xy)^{m}$. In these coordinates the first equality in
\eqref{eq:schur_yb_bijection} reads $k_1'=a-m+h=h+\max(a-c,0)$, and the
conservation relation $k_2'=k_1'+j_2-j_1$ then gives
$k_2'=h+c-m=h+\max(c-a,0)$. Both outputs are thus expressed through
$(a,h,c)$ alone, by the map
\begin{equation}
  \label{eq:3d_R}
  R^{(3)}(a,h,c)
  =\bigl(h+\max(a-c,0),\;\min(a,c),\;h+\max(c-a,0)\bigr),
  \qquad
	R^{(3)}\colon\mathbb Z_{\ge0}^3\to\mathbb Z_{\ge0}^3 .
\end{equation}
This map is the standard combinatorial three-dimensional
$R$ of Kuniba--Okado \cite{KunibaOkado2012Tetrahedron3D}
and Kuniba \cite{Kuniba2016CombinatorialYBmaps}.

In the coordinates \eqref{eq:3d_R_coordinates}, weight
preservation in \Cref{lemma:schur_yb_bijection} reads
\begin{equation}
  \label{eq:3d_R_conservation}
  a+h=k_1'+m,
  \qquad
  h+c=m+k_2' ,
\end{equation}
the first matching the exponents of $y$ in
\eqref{eq:schur_yb_summands} and the second those of $x$.
Thus $R^{(3)}$ preserves each of the finite sets
\begin{equation}
  \label{eq:3d_R_sector}
  C_{p,q}=\{(i,j,k)\in\mathbb Z_{\ge0}^3:\ i+j=p,\ j+k=q\},
  \qquad p,q\in\mathbb Z_{\ge0},
\end{equation}
on which the middle coordinate $j$ ranges over
$0\le j\le\min(p,q)$ and determines the other two. On
$C_{p,q}$ the map $R^{(3)}$ is the reversal
$j\mapsto\min(p,q)-j$ of the middle coordinate, since
$j'=\min(i,k)=\min(p,q)-j$. In particular, $R^{(3)}$ is
an involution.

A map of triples has its own consistency equation, one
step above \eqref{eq:set_theoretic_ybe}.

\begin{definition}
\label{def:set_theoretic_tetrahedron_map}
Let $X$ be a set. A \emph{set-theoretic tetrahedron map} is a map
$R\colon X^3\to X^3$ satisfying the Zamolodchikov tetrahedron equation
\cite{Zamolodchikov1981Tetrahedron}
\begin{equation}
  \label{eq:tetrahedron}
  R_{123}\,R_{145}\,R_{246}\,R_{356}
  =
  R_{356}\,R_{246}\,R_{145}\,R_{123}
  \qquad\text{on }X^6,
\end{equation}
where $R_{abc}$ applies $R$ to the factors $a,b,c$ and acts as the identity on
the other three. The four index triples $123$, $145$, $246$, $356$ are the four
faces of a tetrahedron, each pair of them meeting in exactly one index; see
\Cref{fig:tetrahedron_faces}.\footnote{Note that some
sources use the indexing $124,135,236,456$, obtained by interchanging the
labels $3$ and $4$, and conventions for composing maps may reverse both sides
of \eqref{eq:tetrahedron}.}
\end{definition}

\begin{figure}[ht]
\centering
\begin{tikzpicture}[scale=1.05,line cap=round,
    baseline={(current bounding box.center)},font=\small,
    every node/.style={inner sep=1.4pt},
    edg/.style={line width=1.1pt},
    fc/.style={fill=upcol!8}]
  \coordinate (A) at (0,2.1);
  \coordinate (B) at (-2.0,-1.3);
  \coordinate (C) at (2.0,-1.3);
  \coordinate (D) at (0,-0.2);
  \fill[fc] (A)--(B)--(D)--cycle;
  \fill[fc] (A)--(C)--(D)--cycle;
  \fill[fc] (B)--(C)--(D)--cycle;
  \draw[edg,upcol] (A)--(B) (A)--(C) (B)--(C);
  \draw[edg,downcol] (A)--(D) (B)--(D) (C)--(D);
  \foreach \p in {A,B,C,D}{\node[circle,fill=black,inner sep=0pt,minimum size=3pt] at (\p){};}
  \node[above left]  at ($(A)!0.5!(B)$) {$1$};
  \node[above right] at ($(A)!0.5!(C)$) {$2$};
	\node[below,yshift=-2]       at ($(B)!0.5!(C)$) {$3$};
  \node[right]       at ($(A)!0.5!(D)$) {$4$};
	\node[above,yshift=2]       at ($(B)!0.5!(D)$) {$5$};
	\node[above,yshift=2]       at ($(C)!0.5!(D)$) {$6$};
  \node at (-0.78,0.02) {$R_{145}$};
  \node at (0.78,0.02)  {$R_{246}$};
  \node at (0,-1.00)    {$R_{356}$};
  \node[above] at (0,2.40) {$R_{123}$ (outer face)};
\end{tikzpicture}
\caption{The four factors of the tetrahedron equation \eqref{eq:tetrahedron}.
The six tensor factors of $X^6$ are drawn as the six edges of a tetrahedron,
and each $R_{abc}$ acts on the three edges of one face: $123$ is the outer face,
and $145$, $246$, $356$ are the three faces meeting at the interior vertex.}
\label{fig:tetrahedron_faces}
\end{figure}

For a fixed boundary, $h$ recovers the original internal
state through
\begin{equation}
  \label{eq:3d_R_reconstruction}
  k_1=h+\max(i_1-i_2,0),\qquad k_2=h+\max(i_2-i_1,0),\qquad
  k_3=i_3+j_1-k_1,\qquad k_3'=j_3+i_1-k_1',
\end{equation}
the first two by \eqref{eq:schur_yb_internal_variables} and the definition of
$h$, and the last two by \eqref{eq:schur_yb_internal_variables} again.
See \Cref{fig:active_triple}.

Thus, we arrive at the following statement, which is the main result of this section.
\begin{proposition}
\label{prop:schur_bijection_is_3d_r}
Let $h=\min(k_1,k_2)$ and $m=\min(j_1,j_2)$.
\begin{enumerate}[label=\textup{(\roman*)}]
\item Under the deterministic Schur matching
  \eqref{eq:schur_yb_bijection}, the three active
  occupations transform as
\begin{equation}
  \label{eq:schur_active_map}
  (j_1,h,j_2)\longmapsto(k_1',m,k_2')=R^{(3)}(j_1,h,j_2).
\end{equation}
\item Consequently, after adjoining the fixed boundary
  data
	$(i_1,i_2,i_3,j_1,j_2,j_3)$
	needed to reconstruct the remaining arrows, the
  Schur matching is the restriction of $R^{(3)}$ to a
  finite boundary-dependent slice of $\mathbb Z_{\ge0}^3$.
\item \textup{(\cite{KunibaOkado2012Tetrahedron3D},
  \cite{Kuniba2016CombinatorialYBmaps})} The map $R^{(3)}$
  is an involutive set-theoretic tetrahedron map: it
  satisfies \eqref{eq:tetrahedron} on $\mathbb Z_{\ge0}^6$.
\item Conversely, every triple
  $(a,h,c)\in\mathbb Z_{\ge0}^3$ arises as the active
  triple of the Schur matching for some boundary. Thus
  $R^{(3)}$ on all of $\mathbb Z_{\ge0}^3$ is assembled
  from the Schur matchings over the various boundaries.
\end{enumerate}
\end{proposition}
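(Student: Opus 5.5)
Parts (i) and (ii) are bookkeeping from \Cref{lemma:schur_yb_bijection}. For (i), I set $a=j_1$, $c=j_2$, $h=\min(k_1,k_2)$ as in \eqref{eq:3d_R_coordinates}. The first form of \eqref{eq:schur_yb_bijection} gives $k_1'=j_1-\min(j_1,j_2)+h=h+\max(a-c,0)$. Conservation in \eqref{eq:schur_yb_internal_variables} gives $k_2'=k_1'+j_2-j_1=h+\max(c-a,0)$. The middle output is $m=\min(j_1,j_2)=\min(a,c)$ by definition. Together these are exactly \eqref{eq:3d_R}.

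For (ii), I use the reconstruction formulas \eqref{eq:3d_R_reconstruction}. With the boundary $(i_1,\dots,j_3)$ fixed, the active triple $(j_1,h,j_2)$ determines the whole left configuration, and $(k_1',m,k_2')$ determines the whole right one. The admissible $h$ form the interval obtained from \eqref{eq:schur_yb_k1_bounds} by subtracting $\max(0,i_1-i_2)$. So the matching is $R^{(3)}$ restricted to the slice $\{(j_1,h,j_2)\}$ with $h$ in that interval, and it maps this slice onto the corresponding slice of right triples, since by the Lemma it is a bijection between the two intervals.

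For (iv), I construct a boundary for a given $(a,h,c)$. I set $j_1=a$, $j_2=c$, $i_1=i_2=0$, and take $i_3$ large, say $i_3=h+a+c$. Then I choose $j_3$ from the balance $i_1+j_2+j_3=i_2+i_3+j_1$, that is, $j_3=i_3+a-c\ge0$. Now $k_1=k_2=h$ must lie in $[0,\min(i_3,i_3+a-c)]$, which holds because $i_3\ge h+c$. I also check the remaining constraints of the vertex weights, namely $k_3=i_3+a-h\ge\max(j_1,j_2)$; this holds for $i_3$ this large. This is routine.

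Part (iii) is the one real obstacle. It is attributed to \cite{KunibaOkado2012Tetrahedron3D,Kuniba2016CombinatorialYBmaps}, and I would mostly cite it. Involutivity is already shown in the text: on each $C_{p,q}$, $R^{(3)}$ reverses the middle coordinate. For a self-contained check of \eqref{eq:tetrahedron}, I would first observe that both sides preserve the conserved linear quantities, since each $R_{abc}$ preserves $x_a+x_b$ and $x_b+x_c$. I would then write $R^{(3)}$ in tropical form, with $\max$ and $\min$ as tropical addition, and recognise it as the ultradiscretization of the birational tetrahedron map $(a,b,c)\mapsto\bigl(ab/(a+c),\,a+c,\,bc/(a+c)\bigr)$ (or a suitable variant). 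That birational map satisfies the tetrahedron equation by a direct rational computation. Since it is subtraction-free, tropicalization preserves the identity, and this yields \eqref{eq:tetrahedron} on $\mathbb Z_{\ge0}^6$. The fallback is a finite case analysis over the relative orders of the piecewise-linear breakpoints, which is tedious but mechanical.
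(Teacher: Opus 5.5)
Your proposal is correct and follows the paper's proof essentially step for step: (i) and (ii) are the same bookkeeping from \Cref{lemma:schur_yb_bijection} and \eqref{eq:3d_R_reconstruction}, and (iii) is likewise deferred to the cited literature. Your tropicalization sketch via the involutive birational map $(a,b,c)\mapsto(ab/(a+c),\,a+c,\,bc/(a+c))$ is an optional extra that the paper does not give. The only real variation is in (iv): you take the loose boundary $i_3=h+a+c$, $j_3=h+2a$, whereas the paper uses the tight choice $i_3=h+\max(c-a,0)$, $j_3=h+\max(a-c,0)$, which puts $h$ at the top of its interval and gives $k_3'=0$. Both choices satisfy the balance condition and the bounds \eqref{eq:schur_yb_k1_bounds}, so either works.
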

\begin{proof}
Part (i) is the computation preceding \eqref{eq:3d_R}.
For (ii), fix a boundary. Then $a=j_1$ and $c=j_2$ are
frozen and only $h$ varies, over the image of the
interval \eqref{eq:schur_yb_k1_bounds} under
$k_1\mapsto\min(k_1,k_2)$. The slice is therefore the
one-dimensional set $\{(j_1,h,j_2)\}$ with $h$ running
over that image. Part (iii) is a theorem of the cited
literature, and we do not reproduce its proof. For (iv),
given $(a,h,c)\in\mathbb Z_{\ge0}^3$, take the boundary
\begin{equation*}
  i_1=i_2=0,\qquad i_3=h+\max(c-a,0),\qquad
  j_1=a,\qquad j_2=c,\qquad j_3=h+\max(a-c,0).
\end{equation*}
The global balance $i_1+j_2+j_3=i_2+i_3+j_1$ holds, since both sides equal
$h+\max(a,c)$. The state $k_1=k_2=h$ lies in the interval
\eqref{eq:schur_yb_k1_bounds}, and \eqref{eq:3d_R_reconstruction} gives
$k_3=\max(a,c)$ and $k_3'=0$.
\end{proof}

\begin{figure}[ht]
\centering
\begin{tikzpicture}[scale=0.85,line cap=round,
    baseline={(current bounding box.center)},font=\small,
    every node/.style={inner sep=1.4pt},
    spec/.style={gray!45,line width=2pt},
    carr/.style={gray!75!black,line width=1.4pt,densely dashed},
    speclab/.style={gray!70!black}]
  \def\lw{2pt}
  \draw[spec] (-1.2,-0.9)--(0.2,0);
  \draw[spec] (-1.2,0.9)--(0.2,0);
  \draw[spec] (1.6,-2.0)--(1.6,-0.9);
  \draw[spec] (1.6,0.9)--(1.6,2.0);
  \draw[carr] (1.6,-0.9)--(1.6,0.9);
  \draw[uppath,line width=\lw,ybeflowb] (0.2,0)--(1.6,0.9);
  \draw[uppath,line width=\lw,ybeflow]  (1.6,0.9)--(3.0,0.9);
  \draw[downpath,line width=\lw,ybeflowb] (0.2,0)--(1.6,-0.9);
  \draw[downpath,line width=\lw,ybeflow]  (1.6,-0.9)--(3.0,-0.9);
  \node[circle,fill=black,inner sep=0pt,minimum size=3pt] at (0.2,0){};
  \node[circle,fill=black,inner sep=0pt,minimum size=3pt] at (1.6,0.9){};
  \node[circle,fill=black,inner sep=0pt,minimum size=3pt] at (1.6,-0.9){};
  \node[left,speclab]  at (-1.2,0.9){$i_1$};
  \node[left,speclab]  at (-1.2,-0.9){$i_2$};
  \node[above,speclab] at (1.6,2.0){$j_3$};
  \node[below,speclab] at (1.6,-2.0){$i_3$};
  \node[right] at (1.70,-0.04){$k_3$};
  \node[right] at (3.0,0.9){$j_2$};
  \node[right] at (3.0,-0.9){$j_1$};
  \node at (0.73,0.72){$k_2$};
  \node at (0.73,-0.72){$k_1$};
  \node at (0.9,-2.75){$(j_1,\,h,\,j_2)$, \ $h=\min(k_1,k_2)$};
\end{tikzpicture}
\qquad$\overset{\textstyle R^{(3)}}{\longmapsto}$\qquad
\begin{tikzpicture}[scale=0.85,line cap=round,
    baseline={(current bounding box.center)},font=\small,
    every node/.style={inner sep=1.4pt},
    spec/.style={gray!45,line width=2pt},
    carr/.style={gray!75!black,line width=1.4pt,densely dashed},
    speclab/.style={gray!70!black}]
  \def\lw{2pt}
  \draw[spec] (-1.2,-0.9)--(0,-0.9);
  \draw[spec] (-1.2,0.9)--(0,0.9);
  \draw[spec] (0,-2.0)--(0,-0.9);
  \draw[spec] (0,0.9)--(0,2.0);
  \draw[carr] (0,-0.9)--(0,0.9);
  \draw[uppath,line width=\lw,ybeflowb] (0,-0.9)--(1.5,0);
  \draw[uppath,line width=\lw,ybeflow]  (1.5,0)--(3.0,0.9);
  \draw[downpath,line width=\lw,ybeflowb] (0,0.9)--(1.5,0);
  \draw[downpath,line width=\lw,ybeflow]  (1.5,0)--(3.0,-0.9);
  \node[circle,fill=black,inner sep=0pt,minimum size=3pt] at (0,0.9){};
  \node[circle,fill=black,inner sep=0pt,minimum size=3pt] at (0,-0.9){};
  \node[circle,fill=black,inner sep=0pt,minimum size=3pt] at (1.5,0){};
  \node[left,speclab]  at (-1.2,0.9){$i_1$};
  \node[left,speclab]  at (-1.2,-0.9){$i_2$};
  \node[above,speclab] at (0,2.0){$j_3$};
  \node[below,speclab] at (0,-2.0){$i_3$};
  \node[left] at (-0.10,0.0){$k_3'$};
  \node[right] at (3.0,0.9){$j_2$};
  \node[right] at (3.0,-0.9){$j_1$};
  \node at (0.91,0.72){$k_1'$};
  \node at (0.91,-0.72){$k_2'$};
  \node at (0.9,-2.75){$(k_1',\,m,\,k_2')$, \ $m=\min(j_1,j_2)$};
\end{tikzpicture}
\caption{The active part \eqref{eq:schur_active_map} of the Yang--Baxter move of
\Cref{fig:ybe}. The matching of \Cref{lemma:schur_yb_bijection} moves the triple
$(j_1,h,j_2)$ to the triple $(k_1',m,k_2')$ by the combinatorial
three-dimensional $R$ of \eqref{eq:3d_R}. The gray edges $i_1,i_2,i_3,j_3$ are
spectators: they are the same on the two sides. The dashed middle segment of the
vertical line is not a spectator --- it changes from $k_3$ to $k_3'$ --- but
both values are recovered
from the active triple and the boundary through
\eqref{eq:3d_R_reconstruction}.}
\label{fig:active_triple}
\end{figure}

The combinatorial three-dimensional $R$, its conserved
sectors, its involutivity, and the tetrahedron equation
\eqref{eq:tetrahedron} come from the
quantized-function-algebra and crystal literature. The
closest references for the form used here are
Kuniba--Okado \cite{KunibaOkado2012Tetrahedron3D} and
Kuniba \cite{Kuniba2016CombinatorialYBmaps}. The latter
also contains the periodic reductions 
(\Cref{subsubsec:third_periodic} below)
and the local
relations \eqref{eq:periodic_layer} below. For the quantum
three-dimensional $R$ and for dimensional reduction, see
Kuniba--Okado \cite{KunibaOkado2015qOscillator}.

The map $R^{(3)}$ is the $q\to0$ limit of the quantum
three-dimensional $R$
\cite[Remark~2.6]{KunibaOkado2012Tetrahedron3D} and, up
to reversing the two outer coordinates, the transition
map between the two Lusztig (PBW) parametrizations for
the type $A_2$ braid move, see Lusztig
\cite[p.~451]{Lusztig1990CanonicalBases} and
\cite[\S2.5]{KunibaOkado2012Tetrahedron3D}.

Schur polynomials also arise from $q$-oscillator
solutions of the tetrahedron equation in
Iwao--Motegi--Ohkawa
\cite{IwaoMotegiOhkawa2025TetrahedronSchur},
\cite{IwaoMotegiOhkawa2026TetrahedralL}, but that
construction does not involve RSK.
Igonin--Konstan\-tinou-Rizos
\cite{IgoninKonstantinouRizos2023LocalYB} show in general
that solving a local Yang--Baxter correspondence produces
set-theoretic tetrahedron maps. Gleizer--Postnikov
\cite{gleizer2000littlewood} found a Yang--Baxter type
relation for scattering matrices describing tensor
products of $GL(N)$ representations, whose
piecewise-linear transformation of parameters solves the
tetrahedron equation. Genuine set-theoretic
Yang--Baxter maps built from insertion include the
plactic braiding of Lebed \cite{Lebed2020Plactic} and the
combinatorial $R$-matrices on rectangular tableaux of
Shimozono \cite{Shimozono2002AffineA} and Frieden
\cite{Frieden2021GeometricRmatrix}. Both use global
insertion data rather than the fixed-boundary matching of
\Cref{lemma:schur_yb_bijection}.

We have not found in the literature the identification
\eqref{eq:schur_active_map} of the active part of the
Schur Yang--Baxter matching with the combinatorial
three-dimensional $R$, nor the comparison between its
periodic reduction and the open RSK train that we describe in 
\Cref{subsubsec:middle_open} below.

\subsection{From three-dimensional $R$ to RSK, row insertion, and box-ball systems}
\label{subsec:two_geometries}

The local map \eqref{eq:3d_R} acts on a single triple. To
obtain a map on pairs of sequences
$\mathbf u,\mathbf v\in\mathbb Z_{\ge0}^n$ (the
column-difference sequences $(\lambda_t-\lambda_{t+1})_t$
of partitions, as in the dictionary
\eqref{eq:rsk_vertical_dictionary}), one composes
$n$ copies of it along a line of sites, and one must
choose which of the three coordinates is the carrier, that
is, which output of one site is fed as the input of the
next site, in which direction the sites are visited, and
whether the carrier is closed periodically or left open
with fixed boundary data. Since $R^{(3)}$ is invariant
under the exchange of its first and third coordinates, the
carrier is either an outer coordinate or the middle one,
and with the two boundary conditions this leaves four
cases. All of these four combinations
lead to known objects, as we explain in this subsection.
We begin with the case that has already appeared in \Cref{sec:rsk} above.

\subsubsection{Middle coordinate, open: RSK}
\label{subsubsec:middle_open}

In \Cref{sec:rsk}, the carrier is the middle coordinate,
left open: we drag the cross through two rows with fixed
boundary data, as in \Cref{fig:rsk_dragging}. Write
$h_c=\min(\lambda_c,\mu_c)-\varkappa_c$ for $c\ge1$, and
let $h_0=k_{ab}$ be the free input at the zeroth column.
The dictionary \eqref{eq:rsk_vertical_dictionary} turns
the move at column $c$ into
\begin{equation}
  \label{eq:open_train_layer}
  R^{(3)}\bigl(\lambda_{c+1}-\varkappa_{c+1},\;h_c,\;\mu_{c+1}-\varkappa_{c+1}\bigr)
  =\bigl(\nu_{c+1}-\mu_{c+1},\;h_{c+1},\;\nu_{c+1}-\lambda_{c+1}\bigr),
  \qquad c\ge0 .
\end{equation}
For $c\ge1$ this is \Cref{prop:schur_bijection_is_3d_r},
with $h_c$ the middle coordinate $\min(k_1,k_2)$ of
\eqref{eq:3d_R_coordinates}. For $c=0$, where the column
carries infinitely many vertical paths and the proposition
does not apply, it is the computation of $\nu_1$ in the
proof of \Cref{prop:train_is_rsk}. The free input
$h_0=k_{ab}$ is the $(a,b)$-entry of the RSK input matrix,
arrow conservation turns the moves at the later columns
into the toggle \eqref{eq:toggle}, and iterating over the
grid gives the growth diagram, hence RSK.

\subsubsection{Third coordinate, open: row insertion}
\label{subsubsec:third_open}
Taking the third coordinate as the carrier, visiting the
sites in the direction of increasing $t$ and leaving the
carrier open, gives the Schensted row insertion. Indeed, let
$\mathbf u,\mathbf v\in\mathbb Z_{\ge0}^n$
record the letter counts of two one-row tableaux, and
solve
\begin{equation*}
  (a_t,b_t,c_{t+1})=R^{(3)}(u_t,v_t,c_t),
  \qquad t=1,\dots,n,
  \qquad c_1=0.
\end{equation*}
Then $\mathbf a$ and $\mathbf b$ are the letter counts of
the first and second rows of the tableau obtained by row
inserting the letters of $\mathbf v$ into $\mathbf u$, and
$c_t$ is the number of letters $<t$ by which the new first
row exceeds $\mathbf u$. See \Cref{fig:row_insertion_counts}
for an example. Running the carrier in the
direction of decreasing $t$ instead amounts to reversing
the alphabet. 

The map
$(\mathbf u,\mathbf v)\mapsto(\mathbf b,\mathbf a)$ is the
plactic braiding of Lebed \cite{Lebed2020Plactic}, an
idempotent, non-invertible solution of
\eqref{eq:set_theoretic_ybe} on $X=\mathbb Z_{\ge0}^n$. In
a birational setting, Noumi--Yamada \cite{NoumiYamada2004}
obtain both tropical row insertion and a birational
counterpart of the combinatorial $R$-matrix from a system
of discrete Toda type, with open and quasi-periodic
indexing, respectively.

\begin{figure}[ht]
\centering
\begin{tikzpicture}[font=\small,
    bx/.style={draw,line width=0.4pt,minimum size=0.5cm,inner sep=0pt,anchor=south west}]
  \node[anchor=east] at (-0.25,0.25) {$\mathbf u=(2,1,0)$};
  \foreach \i/\l in {0/1,1/1,2/2}{\node[bx] at (0.5*\i,0) {$\l$};}
  \node[anchor=east] at (-0.25,-0.85) {$\mathbf v=(1,0,1)$};
  \foreach \i/\l in {0/1,1/3}{\node[bx] at (0.5*\i,-1.1) {$\l$};}
  \draw[-{Stealth[length=2mm]}] (2.1,-0.3) -- node[above,font=\footnotesize]{row insertion of $\mathbf v$ into $\mathbf u$} (6.1,-0.3);
  \foreach \i/\l in {0/1,1/1,2/1,3/3}{\node[bx] at (6.6+0.5*\i,0) {$\l$};}
  \node[bx] at (6.6,-0.5) {$2$};
  \node[anchor=west] at (8.9,0.25) {$\mathbf a=(3,0,1)$};
  \node[anchor=west] at (8.9,-0.25) {$\mathbf b=(0,1,0)$};
\end{tikzpicture}
\caption{Row insertion in letter counts, with $n=3$. The one-row
tableaux $1\,1\,2$ and $1\,3$ have letter counts $\mathbf u$ and
$\mathbf v$; inserting the letters of $\mathbf v$ into the first row
bumps a $2$ into the second row, and the two rows of the result have
letter counts $\mathbf a$ and $\mathbf b$. The carrier is
$(c_1,c_2,c_3,c_4)=(0,1,0,1)$: the new first row has one more letter
$1$ than $\mathbf u$, the same number of letters $\le2$, and one more
letter~$\le3$.}
\label{fig:row_insertion_counts}
\end{figure}

\subsubsection{Third coordinate, periodic: combinatorial $R$-matrices and the box-ball system}
\label{subsubsec:third_periodic}
Fix $n\ge1$ and, for $l\ge1$, let
$B^{(n)}_l=\{(u_1,\dots,u_n)\in\mathbb Z_{\ge0}^n:u_1+\dots+u_n=l\}$,
the one-row symmetric-tensor crystal. Given
$\mathbf u\in B^{(n)}_l$ and $\mathbf v\in B^{(n)}_{l'}$,
take the third coordinate as the carrier, visit the sites
in the direction of decreasing $t$, and close the carrier
periodically: look for carrier occupations
$c_1,\dots,c_n$ with $c_0=c_n$ such that
\begin{equation}
  \label{eq:periodic_layer}
  (a_t,b_t,c_{t-1})=R^{(3)}(u_t,v_t,c_t),
  \qquad t=1,\dots,n,
\end{equation}
that is, $a_t=v_t+\max(u_t-c_t,0)$, $b_t=\min(u_t,c_t)$ and
$c_{t-1}=v_t+\max(c_t-u_t,0)$. The recursion determines
the carrier from $c_n$, so this is one equation for one
unknown. For $l>l'$ it has a unique solution, and for
$l=l'$ every sufficiently large $c_n$ solves it with the
same output. Setting
$\mathcal R_{l,l'}(\mathbf u,\mathbf v)=(\mathbf b,\mathbf a)$
in these cases and
$\mathcal R_{l,l'}:=(\mathcal R_{l',l})^{-1}$ for $l<l'$
gives bijections
$$\mathcal R_{l,l'}\colon B^{(n)}_l\times B^{(n)}_{l'}\to B^{(n)}_{l'}\times B^{(n)}_l.$$
The tetrahedron equation \eqref{eq:tetrahedron} for the
local map implies the colored set-theoretic Yang--Baxter
relation of \Cref{def:colored_set_theoretic_yb_map} for
the family $\{\mathcal R_{l,l'}\}$. These
$\mathcal R_{l,l'}$ are the classical combinatorial
$R$-matrices of the crystals $B^{(n)}_l$, going back to
Nakayashiki--Yamada \cite{NakayashikiYamada1997Kostka}, and the time evolution of the box-ball system is
their successive application, with a carrier passing
through a row of boxes. The relations
\eqref{eq:periodic_layer}, their solvability, and this
identification are due to
\cite[\S6]{Kuniba2016CombinatorialYBmaps}. 
\begin{remark}
	Box-ball
	systems and RSK are also linked directly 
	in another manner, without passing
	through $R^{(3)}$: the recent work of
	Imamura--Mucciconi--Sasamoto--Scrimshaw
	\cite{ImamuraMucciconiSasamotoScrimshaw2026SkewColumnRSK}
	reads Fomin's local rule for Schensted column insertion as
	a two-lane box-ball system with a carrier, and projects it
	onto the classical box-ball system.
\end{remark}

\subsubsection{Middle coordinate, periodic: the combinatorial $R$-matrix with a dual crystal}
\label{subsubsec:middle_periodic}

Taking the middle coordinate as the carrier, visiting the
sites in the direction of increasing $t$ and closing the
carrier periodically, is simpler, because the middle
output $\min(a,c)$ of \eqref{eq:3d_R}
does not depend on the middle input $h$. For
$\mathbf u\in B^{(n)}_l$ and $\mathbf v\in B^{(n)}_{l'}$,
the relations
\begin{equation*}
  (a_t,c_t,b_t)=R^{(3)}(u_t,c_{t+1},v_t),
  \qquad t=1,\dots,n,
  \qquad c_{n+1}=c_1,
\end{equation*}
force $c_t=\min(u_t,v_t)$ and give $a_t=u_t+\delta_t$,
$b_t=v_t+\delta_t$ with
$\delta_t=\min(u_{t+1},v_{t+1})-\min(u_t,v_t)$. The map
$(\mathbf u,\mathbf v)\mapsto(\mathbf b,\mathbf a)$ is the
combinatorial $R$-matrix $R^{\vee}$ of
Kuniba--Okado--Yamada
\cite{KunibaOkadoYamada2005ReflectingEnd}.
Like $\mathcal R_{l,l'}$, it is a
set-theoretic Yang--Baxter map and not a
three-dimensional one, but it pairs $B^{(n)}_l$ with the
dual crystal, and drives their box-ball system with a
reflecting end. They also record the Yang--Baxter relations satisfied
by $R^{\vee}$ together with the maps $\mathcal R_{l,l'}$ of
\Cref{subsubsec:third_periodic}. Trace reductions
of the tetrahedron equation, including those along the
middle coordinate, are treated in
\cite[Chapter~13]{Kuniba2022QuantumGroups3D}.

\section{Bijectivization: From identities to Markov transitions}
\label{sec:bijectivization}

We outline the idea of \emph{bijectivization} (probabilistic bijection),
the device that converts the algebraic Yang--Baxter identities of the previous sections into
stochastic (Markov) steps. 

When both sides are sums of nonnegative weights,
$\sum_{a}\mathrm{wt}(a)=\sum_{b}\mathrm{wt}(b)$, we \emph{randomize}
the identity by introducing a transformation --- a Markov step ---
which turns
a configuration on the
left into a randomly chosen configuration on the right.
This randomization must be consistent with the weights, but its choice need not be unique.
Carried
across the lattice, these random steps are interpreted as a random growth process.
We call this passage from an identity to a randomized transformation a
\emph{bijectivization} or a \emph{probabilistic bijection}.
It is an application, in a solvable lattice setting, of the general principle of
\emph{coupling} in probability theory; see the books of Lindvall \cite{Lindvall2002} and
Thorisson \cite{Thorisson2000}.

\begin{figure}[ht]
\centering
\begin{tikzpicture}[scale=1.4,>={Stealth[length=2mm]},font=\small,
    every node/.style={inner sep=1.5pt}]
  \draw[gridln] (0,-1.0)--(0,1.0) (-1.0,0)--(1.0,0);
  \draw[-{Stealth[length=1.7mm]},line width=0.9pt] (0,-0.95)--(0,-0.2);
  \draw[-{Stealth[length=1.7mm]},line width=0.9pt] (-0.95,0)--(-0.2,0);
  \draw[-{Stealth[length=1.7mm]},line width=0.9pt] (0,0.2)--(0,0.95);
  \draw[-{Stealth[length=1.7mm]},line width=0.9pt] (0.2,0)--(0.95,0);
  \node[vtx] at (0,0) {};
  \node[below] at (0,-0.98) {$i_1$};
  \node[left]  at (-0.98,0) {$j_1$};
  \node[above] at (0,0.98) {$i_2$};
  \node[right] at (0.98,0) {$j_2$};
  \node[font={\scriptsize\itshape},gray!50!black] at (-0.58,-0.58) {input};
  \node[font={\scriptsize\itshape},gray!50!black] at (0.60,0.58) {output};
  \node[anchor=west] at (1.9,0)
    {Stochastic
		\ $\Leftrightarrow$\ 
		$\displaystyle\sum_{i_2,\,j_2} w(i_1,j_1;i_2,j_2)=1\quad\text{for all }i_1,j_1$.};
\end{tikzpicture}
\caption{A single vertex has an \emph{input} (the incoming occupations
$(i_1,j_1)$ on the bottom and left edges) and an \emph{output} (the outgoing
occupations $(i_2,j_2)$ on the top and right).
The Boltzmann weight of the vertex 
$w(i_1,j_1;i_2,j_2)$ is called \emph{stochastic} when these weights are the
conditional probabilities of the output given the input.}
\label{fig:stochastic_vertex}
\end{figure}

\medskip

Bijectivization of the Yang--Baxter equation was introduced in
\cite{BufetovPetrovYB2017},
extending the earlier works which constructed randomized (Markov)
RSK-like insertion processes for Hall--Littlewood and $q$-Whittaker polynomials
\cite{OConnellPei2012},
\cite{BorodinPetrov2013NN}, \cite{MatveevPetrov2014},
\cite{BufetovPetrov2014}, \cite{BorodinBufetovWheeler2016}.
In integrable probability, Yang--Baxter bijectivization has been 
applied further to connect spin Hall--Littlewood 
and spin $q$-Whittaker polynomials \cite{BufetovMucciconiPetrov2018},
\cite{MucciconiPetrov2020} to various stochastic vertex models; 
to construct Markov maps acting on the fixed-time
distributions of TASEP
\cite{PetrovSaenz2019backTASEP}, \cite{petrov2022rewriting}
or preserving the KPZ pure states of the stochastic
six-vertex model on the full plane
\cite{nicoletti2022irreversible}.
Furthermore, bijectivization has been extended to half-space models
\cite{chen2021stable}
and to the full two-parameter Macdonald setting 
\cite{aigner2020_Macdonald_RSK}, \cite{FriedenSchreierAigner2024qtRSK}.

Strikingly, the
same mechanism appears independently in two-dimensional critical statistical
mechanics in the proof of rotational invariance of the Ising, random-cluster,
and six-vertex models.
Namely, the
star-triangle (Yang--Baxter) relation is realized as a move that ``uses extra randomness'' and ``is not a
deterministic matching of the different configurations''
\cite{DuminilCopinKozlowskiKrachunManolescuOulamara2020}. 
This is the general mechanism which we are about to describe.

\medskip

Consider any identity with positive terms,
\begin{equation}
  \label{eq:identity_sum}
  \sum_{a\in A}\mathrm{wt}(a) \;=\; \sum_{b\in B}\mathrm{wt}(b),
\end{equation}
where $A,B$ are nonempty finite or countable index sets, $\mathrm{wt}>0$, and the common total mass
$Z\coloneqq\sum_{a\in A}\mathrm{wt}(a)=\sum_{b\in B}\mathrm{wt}(b)$ is finite. A
\emph{bijectivization} (or probabilistic bijection) of \eqref{eq:identity_sum}
is a pair of nonnegative matrices $\mathsf{p}^{\mathrm{fwd}}(a\to b)$ and
$\mathsf{p}^{\mathrm{bwd}}(b\to a)$, indexed by
$a\in A$ and $b\in B$, satisfying the
stochasticity
\begin{equation*}
	\sum_{b\in B}\mathsf{p}^{\mathrm{fwd}}(a\to b)=1,\qquad
	\sum_{a\in A}\mathsf{p}^{\mathrm{bwd}}(b\to a)=1,
\end{equation*}
and the 
reversibility (detailed balance) relation
\begin{equation}
  \label{eq:bijectivization}
  \mathsf{p}^{\mathrm{fwd}}(a\to b)\,\mathrm{wt}(a) \;=\; \mathsf{p}^{\mathrm{bwd}}(b\to a)\,\mathrm{wt}(b)
  \qquad\text{for all } a\in A,\ b\in B.
\end{equation}
Summing \eqref{eq:bijectivization} over $b$ recovers each term
$\mathrm{wt}(a)$ of the left side of
\eqref{eq:identity_sum} from $\mathsf{p}^{\mathrm{fwd}}$ being stochastic, and summing over $a$
recovers each term of the right side; thus a bijectivization is exactly a \emph{coupling} 
\cite{Lindvall2002}, \cite{Thorisson2000}
of the two
sides of the identity.
That is, the joint measure on $A\times B$
with weights
$\mathsf{p}^{\mathrm{fwd}}(a\to b)\mathrm{wt}(a)$
has marginals $\mathrm{wt}(a)$ and $\mathrm{wt}(b)$ on $A$ and $B$, respectively;
dividing it by $Z$ turns it into a probability distribution.

Let us highlight two extreme cases of bijectivization:
\begin{enumerate}[label=$\bullet$]
  \item If both $\mathsf{p}^{\mathrm{fwd}}$ and $\mathsf{p}^{\mathrm{bwd}}$ are equal to
	zero or one, then they are supported on the graphs of two mutually inverse maps, and the
	coupling is an honest weight-preserving \emph{bijection} $a\mapsto b$.
	(A zero-one $\mathsf{p}^{\mathrm{fwd}}$ by itself only makes the transport
	deterministic: it may send several $a$'s to one $b$, and then
	$\mathsf{p}^{\mathrm{bwd}}$ is genuinely random.)
	One can say that this bijection provides a combinatorial proof of \eqref{eq:identity_sum}.
  
\item The opposite extreme is the \emph{independent}  (``maximally random'')
	bijectivization
	\begin{equation*}
		\mathsf{p}^{\mathrm{fwd}}(a\to b)=\frac{\mathrm{wt}(b)}{\sum_{b'\in B}\mathrm{wt}(b')},
		\qquad
		\mathsf{p}^{\mathrm{bwd}}(b\to a)=\frac{\mathrm{wt}(a)}{\sum_{a'\in A}\mathrm{wt}(a')},
	\end{equation*}
	which makes the output independent of the input.
	The joint measure in this case is $Z^{-1}\mathrm{wt}(a)\mathrm{wt}(b)$; dividing it by $Z$
	as above turns it into the product of the two normalized marginals
	$\mathrm{wt}(a)/Z$ and $\mathrm{wt}(b)/Z$.
\end{enumerate}
A bijectivization always exists, 
as the second example shows.
It is unique only when one of $A,B$ is a singleton.
When both $A$ and $B$ have exactly two elements, the bijectivization is a one-parameter family.

\medskip

When bijectivization is applied to the YBE \eqref{eq:ybe}, 
the sets $A$ and $B$ are the sets of arrow configurations of the internal
edges (indexed by $k_1,k_2,k_3$ and $k_1',k_2',k_3'$, respectively) which carry a nonzero
weight, and the
weights are the products of the three vertex weights on each side.
Here and below, $A$ and $B$ always mean the supports of the two sums: configurations of
weight zero are discarded, and all stochasticity and uniqueness statements are imposed on
the positive-weight support only.
The full power of bijectivization is realized when it is applied
sequentially on the lattice, by dragging the cross vertex 
through several columns, one at a time. In particular, 
if the cross is dragged from left to right, then locality
of the moves implies that the distribution of $k_1',k_2',k_3'$
in the first (leftmost) column does not depend on the subsequent columns.

For the Schur weights this construction admits the first
of the two extreme cases above: the matching
\eqref{eq:schur_yb_bijection} of
\Cref{lemma:schur_yb_bijection} is a bijectivization of
\eqref{eq:ybe} with
$\mathsf p^{\mathrm{fwd}},\mathsf p^{\mathrm{bwd}}\in\{0,1\}$,
and dragging the cross across the lattice then produces
the deterministic RSK correspondence of
\Cref{prop:train_is_rsk} instead of a random dynamics.
For generic parameter values, the $q$- and $t$-deformed
models of \Cref{sec:qdef} have boundary data admitting no
termwise matching of the two sides, and the couplings
they require are random; see \Cref{sec:def_rsk}.

\section{$q$-deformations: Hall--Littlewood, $q$-Whittaker, and fusion}
\label{sec:qdef}

The $t$- and $q$-deformations treated in this section
continue the probabilistic half of these notes, begun in
\Cref{sec:bijectivization}.

Putting the five-vertex model aside, we now focus on the two other Schur models of
\Cref{sec:schur_models}.
Each of them carries deformations that preserve the YBE and hence
the Cauchy identity (proven exactly in the same way
as in \Cref{subsec:ybe_cauchy}).
We do not explore the full generality of the deformations
(detailed in 
\cite{Borodin2014vertex},
\cite{BorodinPetrov2016inhom} and 
\cite{BorodinWheelerSpinq}, \cite{MucciconiPetrov2020}, \cite{BorodinKorotkikh2021inhom},
\cite{korotkikh2024representation}), 
but focus on the symmetric-function cases of Hall--Littlewood and
$q$-Whittaker polynomials.

\subsection{Vertex model for Hall--Littlewood polynomials}
\label{subsec:hl}

The Hall--Littlewood weights are the $t$-deformation of the higher-spin
six-vertex model of \Cref{rem:higher_spin}: horizontal edges carry $j_1,j_2\in\{0,1\}$,
the vertical edges have unbounded capacity, and 
arrows are conserved,
$i_1+j_1=i_2+j_2$. 
The four nonzero weights, with spectral parameter $x$, are
\begin{equation}
  \label{eq:hl_weights}
  \vcenter{\hbox{\small
  \begin{tabular}{cccc}
  \makebox[2.4cm]{\fvv{%
    \draw[upcol,line width=1pt,->] (-0.1,-0.6)--(-0.1,0.6);
    \draw[upcol,line width=1pt,->] (0,-0.6)--(0,0.6);
    \draw[upcol,line width=1pt,->] (0.1,-0.6)--(0.1,0.6);}} &
  \makebox[2.4cm]{\fvv{%
    \draw[upcol,line width=1pt,->] (-0.1,-0.6)--(-0.1,0.6);
    \draw[upcol,line width=1pt,->] (0,-0.6)--(0,0.6);
    \draw[upcol,line width=1pt,->] (0.1,-0.6)--(0.1,0.6);
    \draw[upcol,line width=1pt,->] (-0.6,0)--(0.6,0);}} &
  \makebox[2.4cm]{\fvv{%
    \draw[upcol,line width=1pt,->] (-0.1,-0.6)--(-0.1,0.6);
    \draw[upcol,line width=1pt,->] (0,-0.6)--(0,0.6);
    \draw[upcol,line width=1pt,->] (0.1,-0.6)--(0.1,0.6);
    \draw[upcol,line width=1pt,->] (-0.6,0)--(-0.2,0)--(-0.2,0.6);}} &
  \makebox[2.4cm]{\fvv{%
    \draw[upcol,line width=1pt,->] (-0.1,-0.6)--(-0.1,0.6);
    \draw[upcol,line width=1pt,->] (0,-0.6)--(0,0.6);
    \draw[upcol,line width=1pt,->] (0.1,-0.6)--(0.1,0)--(0.6,0);}} \\[3pt]
  \makebox[2.4cm]{$w=1$} & \makebox[2.4cm]{$w=x$} & \makebox[2.4cm]{$w=1$} & \makebox[2.4cm]{$w=x\,(1-t^{\,g})$} \\
  \end{tabular}}}
\end{equation}
where $g$ denotes the occupation of the incoming (bottom) vertical edge
($g=3$ in the pictures), and
all other configurations are weighted $0$. At $t=0$, we recover the Schur weights
from \Cref{fig:5v_model_higher_spin}.
Up to the horizontal and vertical gauge factors $x^{\,j_2-j_1}$ and
$(t;t)_{i_1}/(t;t)_{i_2}$, respectively, these weights are the Fock space matrix elements of the
$q$-boson $L$-operator of \cite{BogoliubovIzerginKitanine98}; see
\cite[\S3.2]{korff2013cylindric}.

\begin{proposition}[\cite{tsilevich2006quantum},
\cite{korff2013cylindric},
\cite{foda2009hall}, \cite{Borodin2014vertex}]
\label{prop:hl}
For a partition $\lambda$ with $\ell(\lambda)\le N$, the partition function of the higher-spin model with $N$
rows of spectral parameters $x_1,\dots,x_N$, weights \eqref{eq:hl_weights},
top boundary $\lambda$ (given by the multiset $\left\{ \lambda_1,\ldots,\lambda_N  \right\}$), fully packed left boundary, and empty bottom and right boundaries
(see \Cref{fig:5v_model_higher_spin}) equals the Hall--Littlewood polynomial $P_\lambda(x_1,\dots,x_N;0,t)$.
\end{proposition}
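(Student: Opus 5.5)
We reduce the lattice statement to the branching rule for Hall--Littlewood polynomials, exactly as in \Cref{prop:schur_partition_function} and \Cref{prop:schur_vertical}. First we decompose the partition function row by row. In the higher-spin model each row has $j_1=1$ at the left boundary (fully packed), $j_2=0$ at the right end, horizontal capacity one, and conservation $i_1+j_1=i_2+j_2$. Hence the vertical occupations between rows $r-1$ and $r$ are the multiplicities $m_x(\lambda^{(r-1)})$ of a partition $\lambda^{(r-1)}$ with $r-1$ parts, zero parts counted in column $0$. Exactly as in \Cref{subsec:skew_rows}, a single row is admissible iff $\lambda^{(r-1)}\prec\lambda^{(r)}$, and then the row configuration is unique. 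So
\begin{equation*}
  Z_\lambda=\sum_{\varnothing=\lambda^{(0)}\prec\lambda^{(1)}\prec\cdots\prec\lambda^{(N)}=\lambda}\ \prod_{r=1}^N \mathrm{wt}_{x_r}\bigl(\lambda^{(r)}/\lambda^{(r-1)}\bigr),
\end{equation*}
where $\mathrm{wt}_x(\nu/\mu)$ is the single-row weight. It then suffices to show $\mathrm{wt}_x(\nu/\mu)=\psi_{\nu/\mu}(t)\,x^{|\nu|-|\mu|}$, with Macdonald's branching coefficient \cite[Ch.~III, (5.8')]{Macdonald1995}
\begin{equation*}
  \psi_{\nu/\mu}(t)=\prod_{i\in I}\bigl(1-t^{m_i(\mu)}\bigr),\qquad I=\{i\ge1:\ \theta'_i=1,\ \theta'_{i+1}=0\},\ \theta=\nu-\mu.
\end{equation*}
Indeed $P_\lambda(x_1,\dots,x_N;t)=\sum\prod_r\psi_{\lambda^{(r)}/\lambda^{(r-1)}}(t)\,x_r^{|\lambda^{(r)}|-|\lambda^{(r-1)}|}$ is the standard tableau formula for $P_\lambda$ (Macdonald III.5.11').

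For the single-row computation we use the weights \eqref{eq:hl_weights}. The power of $x$ is the number of occupied outgoing horizontal edges, which by the dictionary $h_x=\nu'_x-\mu'_x$ summed over columns $x\ge1$ is $|\nu|-|\mu|$. This matches the Schur case (\Cref{rem:higher_spin}), including column $0$. The $t$-dependent factors come only from vertices of type $(g,0;g-1,1)$, where a path leaves a column with $g$ incoming vertical arrows while no path arrives from the left. In terms of the strip, these are the columns $i$ with $h_i=0$, $h_{i+1}=1$, i.e.\ $\theta'_i=0,\theta'_{i+1}=1$, and then $g=m_i(\mu)$.

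The main obstacle is the index-convention check: we must reconcile this set with Macdonald's $I$, which is phrased in column lengths of $\theta$ rather than horizontal edges. We would verify carefully, on small examples such as $\mu=(1)$, $\nu=(2)$ or $(1,1)$, whether Macdonald's $\psi$ or $\varphi$ (the $Q$-version) appears, and whether the column-$0$ vertex (where $g=N-\ell$ counts zero parts, giving the factor $1-t^{m_0}$) is correctly included. If the shift is off by one, we expect the fix to be reading $P$ versus $Q$ branching, or reversing the variable order, which is harmless by symmetry of $P_\lambda$. Once the single-row identity is established, the row decomposition gives $Z_\lambda=P_\lambda(x_1,\dots,x_N;0,t)$.
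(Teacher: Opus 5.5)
Your strategy is sound and is the standard elementary argument: split the lattice into rows, identify each row weight with Macdonald's one-variable branching coefficient, and sum over interlacing chains via \cite[Ch.~III, (5.11')]{Macdonald1995}. The paper itself gives no argument beyond the citations, so this is a legitimate self-contained route, parallel to the tableau proofs of \Cref{prop:schur_partition_function} and \Cref{prop:schur_vertical}. Your lattice computation is also correct. Each row is forced by $h_0=1$ and $h_x=\nu'_x-\mu'_x$ for $x\ge1$, the power of $x$ is $|\nu|-|\mu|$, and the only $t$-factors are $1-t^{m_i(\mu)}$ at columns $i\ge1$ with $\theta'_i=0$, $\theta'_{i+1}=1$.

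The one real defect is the formula you quote for $\psi_{\nu/\mu}$. You took the index set of \cite[Ch.~III, (5.8)]{Macdonald1995}, which belongs to $\varphi_{\nu/\mu}$ and carries $m_i(\nu)$, and combined it with the multiplicities $m_i(\mu)$ of (5.8'). With that formula your single-row identity is false. For $\mu=(1)$, $\nu=(2,0)$ your set is $I=\{2\}$, giving the factor $1-t^{m_2(\mu)}=0$. But the row consists of the vertices $(0,1;1,0)$, $(1,0;0,1)$, $(0,1;1,0)$ in columns $0,1,2$, and has weight $(1-t)x$. The correct statement (5.8') is
\[
\psi_{\nu/\mu}(t)=\prod_{j\in J}\bigl(1-t^{m_j(\mu)}\bigr),\qquad J=\{j\ge1:\ \theta'_j=0,\ \theta'_{j+1}=1\}.
\]
This is verbatim the set your lattice computation produced, so the ``index-convention obstacle'' disappears.

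The hedges should also be dropped. Switching to $Q$-branching would be wrong: for example $\varphi_{(1,1)/(1)}=1-t^2$, while that row has weight $x$. No reversal of variables is needed either: row $r$, counted from the bottom, carries $x_r$ and takes $\lambda^{(r-1)}$ to $\lambda^{(r)}$, which is literally (5.11').

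The column-$0$ worry resolves in one line. The left boundary is fully packed, so $j_1=1$ at column $0$ in every row. The vertex there is therefore $(g,1;g+1,0)$ or $(g,1;g,1)$, and no factor $1-t^{m_0}$ ever appears, consistent with $J\subseteq\mathbb{Z}_{\ge1}$. With these corrections the proof is complete.
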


A standard reference on Hall--Littlewood and Macdonald polynomials 
is the book \cite{Macdonald1995}.
Here and below, we use the notation $P_\lambda(\cdots;q,t)$
for the Macdonald polynomials. For $q=0$, they reduce to the Hall--Littlewood polynomials.
For $t=0$, Macdonald polynomials become the $q$-Whittaker polynomials, which we
discuss in \Cref{subsec:qw} below.
Both families become Schur polynomials at $q=t$ (which, in applications
to vertex models, is taken to be $q=t=0$).

\begin{remark}[Spin Hall--Littlewood]
\label{rem:spin_hl}
Adjoining a \emph{spin} parameter $s$ deforms the four weights
\eqref{eq:hl_weights}, in the same order, into
\begin{equation*}
  \frac{1-sxt^{g}}{1-sx},\qquad
  \frac{x-st^{g}}{1-sx},\qquad
  \frac{1-s^{2} t^{g}}{1-sx},\qquad
  \frac{x(1-t^{g})}{1-sx},
\end{equation*}
which return \eqref{eq:hl_weights} at $s=0$. Their partition functions are, up to
normalization, the \emph{spin Hall--Littlewood} rational functions \cite{Borodin2014vertex},
\cite{BorodinPetrov2016inhom}. One may even let $s\rightsquigarrow s_j$ vary column
by column without breaking the YBE.
\end{remark}

\subsection{Vertex model for $q$-Whittaker polynomials}
\label{subsec:qw}

Now
consider a $q$-deformation of the 
up-right weight $W$ \eqref{eq:upright_cross}, whose edges carry arbitrary
multiplicities $i_1,j_1,i_2,j_2\in\mathbb Z_{\ge0}$,
subject to the conservation law $i_1+j_1=i_2+j_2$ and the restriction $j_2\le i_1$.
The $q$-deformed weight is defined as
\begin{equation}
  \label{eq:qw_weights}
  W(i_1,j_1;i_2,j_2)
  =\mathbf 1_{i_1+j_1=i_2+j_2}\,\mathbf 1_{i_1\ge j_2}\,x^{j_2}\,
   \frac{(q;q)_{i_1}}{(q;q)_{j_2}\,(q;q)_{i_1-j_2}},
\end{equation}
where $(a;q)_m=\prod_{k=0}^{m-1}(1-aq^k)$ is the $q$-Pochhammer symbol (with
$(a;q)_0=1$, $(a;q)_{m}=\prod_{k=0}^{-m-1}(1-aq^{m+k})^{-1}$ for $m<0$, and
$(a;q)_{\infty}=\prod_{k\ge0}(1-aq^{k})$).
The down-right weight, deforming \eqref{eq:downright_cross}, is
\begin{equation}
  \label{eq:qw_weights_dual}
  W^{*}(i_1,j_1;i_2,j_2)
  =\mathbf 1_{i_2+j_1=i_1+j_2}\,\mathbf 1_{i_2\ge j_2}\,x^{j_2}\,
   \frac{(q;q)_{i_1}}{(q;q)_{j_2}\,(q;q)_{i_2-j_2}}.
\end{equation}
As $q\to0$, both weights reduce to
\eqref{eq:upright_cross} and \eqref{eq:downright_cross}.
Unlike in the Schur case, \eqref{eq:qw_weights_dual} is not the plain reflection
$i_1\leftrightarrow i_2$ of \eqref{eq:qw_weights}, but carries an extra vertical-edge
factor $(q;q)_{i_1}/(q;q)_{i_2}$. It telescopes along a column, and it is required for the
YBE of \Cref{subsec:qw_rsk}.\footnote{The asymmetry also reflects the shape of the Cauchy identity, which pairs $P_\lambda$
with $Q_\lambda$ rather than with a second copy of $P_\lambda$. The two families are
merely proportional, $Q_\lambda=b_\lambda P_\lambda$ with
$b_\lambda=\prod_{i\ge1}(q;q)_{\lambda_i-\lambda_{i+1}}^{-1}$ depending on~$\lambda$ and~$q$
but not on the variables. The two models are normalized accordingly: the up-right model
\eqref{eq:qw_weights} produces $Q_\lambda$ (\Cref{prop:qw}), while the telescoped factor in
\eqref{eq:qw_weights_dual} makes the down-right model produce $P_\lambda$.}

\begin{remark}
The ratio $(q;q)_{i_1}/(q;q)_{j_2}\,(q;q)_{i_1-j_2}$ is the $q$-binomial
coefficient 
$\begin{bmatrix} i_1 \\ j_2 \end{bmatrix}_q$,
a standard $q$-deformation of the binomial
coefficient $\binom{i_1}{j_2}$. Note that the ``classical'' limit for us 
is $q\to0$, not $q\to1$.
\end{remark}

\begin{proposition}[{cf.\ \cite[\S6.2]{korff2013cylindric}}]
\label{prop:qw}
For a partition $\lambda$, the partition function of the up-right model with $N$
rows of spectral parameters $x_1,\dots,x_N$, weights \eqref{eq:qw_weights},
top boundary $\lambda$, empty bottom boundary, and the infinite column-$0$
reservoir of \Cref{rmk:infinite_boundary_at_column_0} on the left
(see \Cref{fig:upright_model})
equals the $q$-Whittaker polynomial
$Q_\lambda(x_1,\dots,x_N;q,0)=b_\lambda\,P_\lambda(x_1,\dots,x_N;q,0)$, where
$b_\lambda=\prod_{i\ge1}(q;q)_{\lambda_i-\lambda_{i+1}}^{-1}$ does not depend on the
$x_i$'s. The down-right weights \eqref{eq:qw_weights_dual}, with the mirrored boundary
conditions, give $P_\lambda$ itself. At $q=0$ one has $b_\lambda=1$.
\end{proposition}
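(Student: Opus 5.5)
The plan is to prove \Cref{prop:qw} by the same configuration–tableau identification as in \Cref{prop:schur_vertical}, and then check that the $q$-binomial weights reproduce the branching coefficients of the $q$-Whittaker polynomials. The first step is to fix the boundary. With empty bottom boundary, a column-$0$ reservoir, and top boundary $\lambda$ in the column-length coordinates \eqref{eq:vertical_strip_coordinates_dualization}, configurations with nonzero weight correspond bijectively to interlacing chains $\varnothing=\lambda^{(0)}\prec\lambda^{(1)}\prec\cdots\prec\lambda^{(N)}=\lambda$, exactly as in the Schur case. The indicator $\mathbf 1_{i_1\ge j_2}$ again encodes the horizontal-strip condition, by \Cref{lemma:vertical_strip_condition} and \Cref{rmk:conjugate_coordinates}. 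The partition function therefore factorizes row by row as
\[
\sum_{\text{chains}}\prod_{r=1}^N \psi_{\lambda^{(r)}/\lambda^{(r-1)}}(x_r),
\]
where $\psi_{\nu/\mu}(x)$ denotes the single-row weight. Only column $0$ needs care. By \Cref{rmk:infinite_boundary_at_column_0}, the limit $M\to\infty$ of $\frac{(q;q)_M}{(q;q)_{j_2}(q;q)_{M-j_2}}$ equals $1/(q;q)_{j_2}$, not $1$. So the reservoir vertex contributes $x^{j_2}/(q;q)_{j_2}$, and I will track this factor explicitly.

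The second step computes $\psi_{\nu/\mu}(x)$ for $\mu\prec\nu$. At column $c\ge1$ the bottom occupation is $i_1=\mu_c-\mu_{c+1}$ and the outgoing horizontal occupation is $j_2=\nu_{c+1}-\mu_{c+1}$. Hence $i_1-j_2=\mu_c-\nu_{c+1}$, and the vertex weight is
\[
\frac{(q;q)_{\mu_c-\mu_{c+1}}}{(q;q)_{\nu_{c+1}-\mu_{c+1}}\,(q;q)_{\mu_c-\nu_{c+1}}}.
\]
At column $0$ the factor is $1/(q;q)_{\nu_1-\mu_1}$. The powers of $x$ sum to $|\nu|-|\mu|$. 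Multiplying over $c\ge0$ gives
\[
\psi_{\nu/\mu}(x)=x^{|\nu|-|\mu|}\prod_{c\ge1}\frac{(q;q)_{\mu_c-\mu_{c+1}}}{(q;q)_{\nu_c-\mu_c}(q;q)_{\mu_c-\nu_{c+1}}},
\]
after reindexing the column-$0$ factor as the $c=1$ term $(q;q)_{\nu_1-\mu_1}^{-1}$ and shifting the other denominators.

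The third step compares $\psi_{\nu/\mu}(x)$ with Macdonald's branching coefficients at $t=0$ \cite[Ch.~VI, (7.13')]{Macdonald1995}. In the $q$-Whittaker case these read
\[
Q_{\nu/\mu}(x)=x^{|\nu|-|\mu|}\frac{b_\nu}{b_\mu}\,\psi'\text{-type product}=x^{|\nu|-|\mu|}\prod_{c}\frac{(q;q)_{\mu_c-\mu_{c+1}}}{(q;q)_{\nu_c-\mu_c}(q;q)_{\mu_c-\nu_{c+1}}},
\]
the standard formula for $Q_{\nu/\mu}$ with one variable when $t=0$. The constant $b_\lambda$ will also need checking at this point. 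Because $Q_\lambda=\sum_{\text{chains}}\prod_r Q_{\lambda^{(r)}/\lambda^{(r-1)}}(x_r)$, the up-right model gives $Q_\lambda$.

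For $W^*$, the extra factor $(q;q)_{i_1}/(q;q)_{i_2}$ telescopes along each column. It multiplies the product by $b_\lambda^{-1}$ at the boundary carrying $\lambda$, and by $b_\varnothing=1$ at the empty end, which turns $Q_\lambda$ into $P_\lambda$. The main obstacle is this bookkeeping: matching the exact form and indexing of Macdonald's $t=0$ branching product with $\psi_{\nu/\mu}$, including the column-$0$ normalization. I would verify it first on one-row shapes, where $Q_{(n)}(x)=x^n/(q;q)_n$, and then on a two-row example. The final claim, $b_\lambda=1$ at $q=0$, is immediate.
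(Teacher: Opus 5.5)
Your proposal is correct and follows the route the paper indicates, namely its citation to Korff together with its remarks on the reservoir factor $1/(q;q)_{j_2}$ and on the telescoping vertical-edge gauge. Your single-row weight, with the column-$0$ factor $1/(q;q)_{\nu_1-\mu_1}$ absorbed as the $c=1$ term, is exactly the $t=0$ value of Macdonald's $\varphi_{\nu/\mu}(q,t)$, so the branching rule gives $Q_\lambda$; and the gauge $(q;q)_{i_1}/(q;q)_{i_2}$ in $W^{*}$ telescopes to $b_\lambda^{-1}$ and gives $P_\lambda$. The one item you left open is immediate, so no further bookkeeping or test examples are needed: at $t=0$, Macdonald's $b_\lambda(q,t)=\prod_{s\in\lambda}\frac{1-q^{a(s)}t^{l(s)+1}}{1-q^{a(s)+1}t^{l(s)}}$ keeps only the boxes with leg length $l(s)=0$, and these give $\prod_{i\ge1}(q;q)_{\lambda_i-\lambda_{i+1}}^{-1}$.
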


\begin{remark}
The column-$0$ reservoir vertex carries a $q$-Pochhammer factor: as $M\to\infty$,
$\bigl[\begin{smallmatrix}M\\ j\end{smallmatrix}\bigr]_q\to1/(q;q)_j$, so that
$W(\infty,j_1;\infty,j_2)=x^{j_2}/(q;q)_{j_2}$ (the $q$-analogue of
\Cref{rmk:infinite_boundary_at_column_0}). For $N=1$ and $\lambda=(m)$ this single vertex
already gives $x^{m}/(q;q)_m=Q_{(m)}(x;q,0)$, whereas $P_{(m)}(x;q,0)=x^m$.
For general $N$ the factor $b_\lambda$ is a global
normalization, see \Cref{rem:spin_qw} below.
\end{remark}

\begin{remark}
\label{rem:korff_qw}
The weights \eqref{eq:qw_weights} were introduced in
\cite[\S6.1]{korff2013cylindric} (up to a left--right
reflection of the lattice, and with $q$ denoted by $t$
there), as Fock space matrix elements of a $q$-boson
$L$-operator, together with the identification of the
partition functions with the skew $q$-Whittaker functions
--- in the cylindric (periodic) setting of that paper,
which reduces to \Cref{prop:qw} for the empty bottom
boundary. The
YBE established in
\cite[\S3.2]{korff2013cylindric} exchanges two rows of
the same model via an infinite-dimensional cross
depending on the ratio of the two spectral parameters,
and yields the symmetry of the partition functions in the
$x_i$'s.

The spin form of the weights \eqref{eq:qw_weights} is already present in
\cite[\S6]{Borodin2014vertex}, as the fused weights at their $q$-Hahn point,
together with a Cauchy identity pairing them with the Hall--Littlewood side ---
obtained there by analytic continuation in the fusion parameter $q^J$, without
a YBE for the fused weights themselves. The dual weights
\eqref{eq:qw_weights_dual} and the YBE of Cauchy type, in the
form of an exchange relation between row operators, first
appeared, in spin generality, in \cite[\S4.6, \S5.9]{BorodinWheelerSpinq}
(with the resulting Cauchy identity in \S7.1 there); the
explicit cross \eqref{eq:qw_cross_full} was computed, in its spin form, in
\cite[App.~A]{BufetovMucciconiPetrov2018}.
\end{remark}

\begin{remark}[Spin and diagonal parameters]
\label{rem:spin_qw}
\label{rem:korotkikh}
One can add a spin parameter $s$ further deforming
\eqref{eq:qw_weights} into the spin $q$-Whittaker
weight of \cite{BorodinWheelerSpinq}, \cite{MucciconiPetrov2020}:
\begin{equation*}
  W(i_1,j_1;i_2,j_2)=\mathbf 1_{i_1+j_1=i_2+j_2}\,\mathbf 1_{i_1\ge j_2}\,x^{j_2}\,
   \frac{(-s/x;q)_{j_2}\,(-sx;q)_{i_1-j_2}\,(q;q)_{i_2}}{(q;q)_{j_2}\,(q;q)_{i_1-j_2}\,(s^2;q)_{i_2}}.
\end{equation*}
At $s=0$ this is $(q;q)_{i_2}/(q;q)_{i_1}$ times \eqref{eq:qw_weights}, not
\eqref{eq:qw_weights} itself; the discrepancy is a vertical-edge gauge, which
telescopes along each column to the overall factor $b_\lambda^{-1}$, so the
$s=0$ spin partition function equals $P_\lambda$ rather than the
$Q_\lambda=b_\lambda P_\lambda$ of \Cref{prop:qw}.
The corresponding partition functions, with suitably
normalized reservoir weights, are the spin $q$-Whittaker
polynomials (unlike the spin Hall--Littlewood symmetric
rational functions, these are polynomials in the $x_i$'s).
Going further, 
\cite{BorodinKorotkikh2021inhom},
\cite{korotkikh2024representation} enriched the
spin $q$-Whittaker weights with extra \emph{diagonal} parameters
carried by the vertices; these have a representation-theoretic meaning and 
interpolation properties.
\end{remark}

\subsection{Macdonald duality}
\label{subsec:cauchy_fusion}

Here we illustrate how one can turn Hall--Littlewood
polynomials into the $q$-Whittaker ones, and connect this construction to vertex models.
First, consider the Cauchy identity for Hall--Littlewood polynomials
\cite[Ch.~III, \S4]{Macdonald1995}:\footnote{This Cauchy identity
also admits a Yang--Baxter proof, e.g., see \cite{Borodin2014vertex}.}
\begin{equation}
  \label{eq:hl_cauchy}
  \sum_{\lambda} P_\lambda(x_1,\dots,x_N;0,t)\,Q_\lambda(y_1,\dots,y_M;0,t)
  \;=\;\prod_{i=1}^{N}\prod_{j=1}^{M}\frac{1-t\,x_iy_j}{1-x_iy_j}.
\end{equation}
Here, $Q_\lambda$ is a multiple of $P_\lambda$, where the coefficient 
depends on $t$ and $\lambda$ but not on the $y_j$'s.

We now perform the following steps with \eqref{eq:hl_cauchy}:
\begin{enumerate}[label=$\bullet$]
\item Replace each $y_i$ by a finite geometric sequence
$y_i,ty_i,\dots,t^{J-1}y_i$ of some length $J\in\mathbb Z_{\ge1}$
(denote this finite geometric sequence by $y_i^{(J)}$).
This is a finite version of the \emph{plethystic substitution} appearing in 
symmetric function theory. On the vertex model side, this corresponds to \emph{fusion} of $J$ rows of the Hall--Littlewood vertex model, and dates back to \cite{KulishReshSkl1981yang}.
This turns the Cauchy identity \eqref{eq:hl_cauchy} into
\begin{equation}
\label{eq:hl_cauchy_fused_1}
	\sum_{\lambda} P_\lambda(x_1,\dots,x_N;0,t)\,Q_\lambda(y_1^{(J)},\dots,y_M^{(J)};0,t)
	\;=\;\prod_{i=1}^{N}\prod_{j=1}^{M}\frac{1-t^J\,x_iy_j}{1-x_iy_j}.
\end{equation}
\item Both sides of \eqref{eq:hl_cauchy_fused_1}
depend analytically on the $2M$ parameters $y_i$ and $t^Jy_i$, treated as independent,
in a neighborhood of $y_i=0$. Indeed, each $Q_\lambda(\cdots;0,t)$ is a polynomial in
the power sums, and the substitution $y_i\mapsto y_i^{(J)}$ turns the $k$-th power sum
into $\sum_{i=1}^{M}\bigl(y_i^{k}-(t^Jy_i)^{k}\bigr)/(1-t^{k})$.
The same dependence on the pair $x$, $t^Jx$ appears in the fused weights
\eqref{eq:fused_weight_z} of \Cref{subsec:fusion} below. In particular,
we may perform the substitutions
\begin{equation*}
	y_i\;\rightsquigarrow\;0,\qquad
	t^Jy_i\;\rightsquigarrow\; -z_i,\qquad i=1,\dots,M,
\end{equation*}
where $z_i$ are new indeterminates.
The Cauchy identity \eqref{eq:hl_cauchy_fused_1} becomes
\begin{equation}
\label{eq:hl_cauchy_fused_2}
	\sum_{\lambda} P_\lambda(x_1,\dots,x_N;0,t)F_\lambda(z_1,\ldots,z_M )=
	\prod_{i=1}^{N}\prod_{j=1}^{M}(1+x_iz_j).
\end{equation}

\item It remains to identify the symmetric functions $F_\lambda(z_1,\ldots,z_M )$ in 
\eqref{eq:hl_cauchy_fused_2}. 
Thanks to the Macdonald 
duality $\omega_{q,t}$ which interchanges $q$ and $t$ in the Macdonald polynomials
(and transposes the partitions, turning $Q_\lambda$ into $P_{\lambda'}$),
the Hall--Littlewood and $q$-Whittaker polynomials satisfy
the following dual Cauchy identity:
\begin{equation}
\label{eq:hl_qw_cauchy}
	\sum_{\lambda} P_\lambda(x_1,\dots,x_N;0,t)P_{\lambda'}(z_1,\ldots,z_M;t,0)=
	\prod_{i=1}^{N}\prod_{j=1}^{M}(1+x_iz_j).
\end{equation}
Since, for each $N$, the identity \eqref{eq:hl_qw_cauchy} determines the functions $P_{\lambda'}(\cdots;t,0)$
with $\ell(\lambda)\le N$ (by the linear independence of the polynomials
$P_\lambda(x_1,\ldots,x_N;0,t)$ with $\ell(\lambda)\le N$), and $N$ is
arbitrary, we conclude that
\begin{equation*}
	F_\lambda(z_1,\ldots,z_M )=P_{\lambda'}(z_1,\ldots,z_M;t,0),
\end{equation*}
which are exactly the $q$-Whittaker polynomials (where the role of $q$ is taken by the parameter~$t$).
\end{enumerate}

\subsection{Fusion}
\label{subsec:fusion}

The same steps as in \Cref{subsec:cauchy_fusion} may be carried out
directly at the level of the vertex weights. 
Let us briefly outline this procedure, which is known as \emph{fusion} 
\cite{KulishReshSkl1981yang}, \cite{KR1987Fusion}, \cite{Borodin2014vertex}, \cite{CorwinPetrov2015}, \cite{BorodinPetrov2016inhom}.

\medskip

The fundamental building block is the Hall--Littlewood vertex
\eqref{eq:hl_weights}, of horizontal capacity one, so $j_1,j_2\in\{0,1\}$.
Fix $J\in\mathbb Z_{\ge1}$ and consider $J$ copies of the vertex, stacked vertically.
Assign to the $r$-th row the spectral parameter $t^{\,r-1}x$, so that the
column carries the geometric string of parameters.
This mirrors the substitution $y_i\mapsto y_i^{(J)}$ from \Cref{subsec:cauchy_fusion} above.

The fused weight, with $i_1,i_2\in\mathbb Z_{\ge0}$ and $0\le j_1,j_2\le J$, is
defined as the sum over the $J$ incoming and outgoing
horizontal occupations $\vec h_1,\vec h_2\in\{0,1\}^J$:
\begin{equation}
  \label{eq:fusion_def}
  w^{(J)}_{x}(i_1,j_1;i_2,j_2)
  =\frac{\mathbf 1_{i_1+j_1=i_2+j_2}}{Z_{j_1}(J)}
  \sum_{\substack{|\vec h_1|=j_1\\ |\vec h_2|=j_2}}
  t^{\sum_{r=1}^{J}(r-1)\,h_1^{(r)}}\,
  \prod_{r=1}^{J} w_{t^{\,r-1}x}\!\bigl(\ell_{r-1},h_1^{(r)};\ell_r,h_2^{(r)}\bigr),
\end{equation}
where $w_u$ are the weights \eqref{eq:hl_weights} with spectral parameter $u$,
and the internal vertical occupations are determined by arrow conservation,
$\ell_0=i_1$ and $\ell_r=\ell_{r-1}+h_1^{(r)}-h_2^{(r)}$; the indicator is what
makes $\ell_J=i_2$. Strings that leave $\mathbb Z_{\ge0}$ need not be excluded by
hand: the first step with $\ell_r=-1$ is the fourth vertex of
\eqref{eq:hl_weights} at $g=0$, of weight $x(1-t^{0})=0$.
See \Cref{fig:fusion} for an illustration of the fusion procedure.

\begin{figure}[ht]
\centering
\begin{tikzpicture}[scale=1.0,>={Stealth[length=1.6mm]},
    baseline={(current bounding box.center)},font=\small,
    rap/.style={circle,draw=black,fill=white,line width=0.6pt,inner sep=0.5pt,
      minimum size=16pt,font=\scriptsize},
    occ/.style={font=\scriptsize,inner sep=1.2pt}]
  \def\Lx{0}
  \draw[gridln,->] (\Lx,-0.75)--(\Lx,3.95);
  \foreach \r in {1,2,3}{\draw[gridln,->] (\Lx-1.0,\r)--(\Lx+1.05,\r);}
  \node[rap] at (\Lx,1) {$x$};
  \node[rap] at (\Lx,2) {$tx$};
  \node[rap] at (\Lx,3) {$t^{2}x$};
  \foreach \r in {1,2,3}{%
    \node[occ,left]  at (\Lx-1.03,\r) {$h_1^{(\r)}$};
    \node[occ,right] at (\Lx+1.08,\r) {$h_2^{(\r)}$};}
  \node[occ,right] at (\Lx+0.12,0.2) {$\ell_0=i_1$};
  \node[occ,right] at (\Lx+0.12,1.5) {$\ell_1$};
  \node[occ,right] at (\Lx+0.12,2.5) {$\ell_2$};
  \node[occ,right] at (\Lx+0.12,3.6) {$\ell_3=i_2$};
  \node at (3.75,2.0) {$\xmapsto{\hspace{0.9cm}\text{fuse}\hspace{0.9cm}}$};
  \node[font=\scriptsize] at (3.75,1.15)
    {$\displaystyle\frac{1}{Z_{j_1}(J)}\sum_{\vec h_1,\vec h_2}
      t^{\sum_{r=1}^{J}(r-1)h_1^{(r)}}$};
  \def\Rx{7.4}
  \draw[gridln,->] (\Rx,1.05)--(\Rx,3.1);
  \draw[gridln,line width=1.5pt,->] (\Rx-1.05,2)--(\Rx+1.05,2);
  \node[rap] at (\Rx,2) {$x$};
  \node[occ,below] at (\Rx,1.0)  {$i_1$};
  \node[occ,above] at (\Rx,3.12) {$i_2$};
  \node[occ,left]  at (\Rx-1.08,2) {$j_1$};
  \node[occ,right] at (\Rx+1.08,2) {$j_2$};
  \node[font=\footnotesize] at (\Rx,0.25) {$w^{(J)}_{x}(i_1,j_1;i_2,j_2)$};
\end{tikzpicture}
\caption{The fusion procedure \eqref{eq:fusion_def} for $J=3$. Summing over
the incoming and outgoing strings $\vec h_1,\vec h_2\in\{0,1\}^J$ with totals
$j_1=|\vec h_1|$ and $j_2=|\vec h_2|$, with the $t$-exchangeable weight on the
incoming arrows, collapses the column to a single fused vertex of horizontal
capacity $J$.}
\label{fig:fusion}
\end{figure}

The normalization
in \eqref{eq:fusion_def}
is the $t$-deformed binomial
\[
  Z_{j}(J)\;=\!\!\sum_{\vec h_1:\,|\vec h_1|=j}\!\! t^{\sum_{r=1}^{J}(r-1)\,h_1^{(r)}}
  \;=\;t^{\binom{j}{2}}\,
	\begin{bmatrix}J\\j\end{bmatrix}_t.
\]
The weight\footnote{Throughout this subsection we assume $0<t<1$, so that $Z_j(J)>0$ for all $0\le j\le J$.
At $t=0$ the normalization vanishes for $j\ge2$ and \eqref{eq:fusion_def} reads $0/0$;
the fused weight is then defined by continuity, through the closed form
\eqref{eq:fused_weight_z} below, whose coefficients $c_p(t)$ are regular at $t=0$.}
$t^{\sum_{r}(r-1)h_1^{(r)}}/Z_{j_1}(J)$ placed on the incoming arrows
is the \emph{$t$-exchangeable} normalization (e.g., see \cite{GnedinOlsh2009q}
for a probabilistic perspective).
Fusion is built so as to preserve the YBE
\cite{KulishReshSkl1981yang}, \cite{KR1987Fusion}; the fused weights
\eqref{eq:fusion_def} therefore satisfy it as well (with a correspondingly
fused cross), as do their stochastic counterparts \cite{Borodin2014vertex},
\cite{BorodinPetrov2016inhom}. By
construction \eqref{eq:fusion_def} is a genuine vertex weight of horizontal
capacity $J$ --- a function of the four occupations $i_1,j_1,i_2,j_2$ alone.

\medskip
In fact, the outer sum over the outgoing string in \eqref{eq:fusion_def} is
redundant: a fused column maps $t$-exchangeable inputs to $t$-exchangeable
outputs \cite[Prop.~3.6]{CorwinPetrov2015}, so that for
\emph{every} fixed $\vec h_2$ with $|\vec h_2|=j_2$,
we have the following refined identity:
\begin{equation}
  \label{eq:texch_out}
  \sum_{\vec h_1:\,|\vec h_1|=j_1} t^{\sum_{r=1}^{J}(r-1)\,h_1^{(r)}}\,
  \prod_{r=1}^{J} w_{t^{\,r-1}x}\!\bigl(\ell_{r-1},h_1^{(r)};\ell_r,h_2^{(r)}\bigr)
  \;=\;\frac{Z_{j_1}(J)}{Z_{j_2}(J)}\,
  t^{\sum_{r=1}^{J}(r-1)\,h_2^{(r)}}\,w^{(J)}_{x}(i_1,j_1;i_2,j_2).
\end{equation}

\begin{remark}\label{rem:fusion_J2}
Let us illustrate \eqref{eq:texch_out} for $J=2$.
The general-$J$ case of \eqref{eq:texch_out} follows from the same computation
applied to adjacent rows, since the symmetrization underlying $t$-exchangeability
is generated by swaps of neighboring rows.

Take $i_1=0$, $j_1=1$, so the incoming string is $(1,0)$ with weight $1$ and
$(0,1)$ with weight $t$. For the outgoing string $\vec h_2=(0,1)$ the two incoming terms give
\[
  \underbrace{t\,w_{x}(0,0;0,0)\,w_{tx}(0,1;0,1)}_{=\,t^{2}x}
  +\underbrace{w_{x}(0,1;1,0)\,w_{tx}(1,0;0,1)}_{=\,tx(1-t)}
  \;=\;tx\;=\;x\,t^{\,1},
\]
while $\vec h_2=(1,0)$ gives $x=x\,t^{\,0}$. Since $j_1=j_2=1$ makes
$Z_{j_1}/Z_{j_2}=1$ in \eqref{eq:texch_out}, either outgoing string already
yields $w^{(2)}_{x}(0,1;0,1)=x$, with no outer sum needed.
\end{remark}

Carrying out the fusion sum \eqref{eq:fusion_def} --- a direct if lengthy
computation --- gives the fused weight in closed form:\footnote{This computation may be
reduced to checking certain recurrences for $w^{(J)}_{x}$.
We refer to \cite[Lem.~3.13, Thm.~3.15]{CorwinPetrov2015} for a detailed derivation
via these recurrences
in the presence of the additional spin parameter.}
\begin{multline}
  \label{eq:fused_weight}
  w^{(J)}_{x}(i_1,j_1;i_2,j_2)
  =
	\mathbf{1}_{i_1+j_1=i_2+j_2}\,
	t^{\frac12 i_1(i_1+2j_1-1)+\frac12(j_1-i_2)(j_1-i_2-1)}\,x^{j_2}\,
  \frac{(t;t)_{j_1}}{(t;t)_{j_2}\,(t;t)_{i_2}}
  \\\times\sum_{k=0}^{i_1}\frac{(t^{-i_1};t)_k\,(t^{-i_2};t)_k}{(t;t)_k}\,
  (t^{1+j_1-i_2+k};t)_{i_1-k}\,(t^{1+J-i_1-j_1+k};t)_{i_1-k}\,t^k .
\end{multline}
Since the prefactor multiplying $x^{j_2}$ is
free of both $x$ and $J$, we may write, for $i_1+j_1=i_2+j_2$,
$w^{(J)}_{x}
(i_1,j_1;i_2,j_2)
=x^{j_2}\sum_{p=0}^{i_1}c_p(t)\,(t^{J})^{p}$.
In particular
$t^{J}$ may be set to any complex value.

Expanding the one $t^{J}$-dependent factor by the $t$-binomial theorem and
evaluating the inner sum by the terminating $t$-Chu--Vandermonde identity
\cite[Eq.~(II.6)]{GasperRahman} gives $c_p(t)$ explicitly:
\begin{equation*}
	c_p(t)=
	(-1)^{p}\,t^{\binom{j_2-p}{2}}\,
	\frac{(t;t)_{j_1}}{(t;t)_{j_2}\,(t;t)_{i_2}}\,
	\begin{bmatrix}i_1\\p\end{bmatrix}_t
	\,(t^{\,1+j_2-p};t)_p\,(t^{\,1+j_1};t)_{i_1-p},
  \qquad 0\le p\le i_1 .
\end{equation*}
The factor $(t^{\,1+j_2-p};t)_p$ acquires the vanishing term $1-t^{0}$ as
soon as $p>j_2$, so 
we see that the degree of \eqref{eq:fused_weight} in $t^{J}$ is 
equal to $\min(i_1,j_2)$.

\medskip
Put $z=-t^{J}x$. Then
\begin{equation}
\label{eq:fused_weight_z}
	w^{(J)}_{x}(i_1,j_1;i_2,j_2)
	=x^{j_2}\sum_{p=0}^{\min(i_1,j_2)} c_p(t)\,(-z/x)^{p}
	=\sum_{p=0}^{\min(i_1,j_2)} c_p(t)\,(-1)^{p}\,x^{j_2-p}\,z^{\,p}.
\end{equation}
Send $x\to0$ while keeping $z$ fixed.
If $i_1< j_2$, then \eqref{eq:fused_weight_z} vanishes,
which leads to the indicator $\mathbf 1_{i_1\ge j_2}$ in \eqref{eq:qw_weights}.
If $i_1\ge j_2$, then the limit picks only the term with $p=j_2$, giving
the $q$-Whittaker weight:
\[
  \mathbf 1_{i_1\ge j_2}\;z^{j_2}\,
	\begin{bmatrix}i_1\\j_2\end{bmatrix}_t
  =\mathbf 1_{i_1\ge j_2}\;z^{j_2}\,\frac{(t;t)_{i_1}}{(t;t)_{j_2}\,(t;t)_{i_1-j_2}},
\]
where the parameter $t$ plays the role of $q$ in \eqref{eq:qw_weights}. 

\begin{remark}[Spin parameter]
\label{rem:spin_fusion}
Carrying the spin parameter $s$ of \Cref{rem:spin_hl} through the fusion works
equally well, and produces the \emph{spin} $q$-Whittaker weights of
\Cref{rem:spin_qw} --- but through a different specialization: not $x\to0$ with
$z=-t^{J}x$ fixed, but $x=s$, so that $t^{J}=-z/s$
\cite[App.~A.3]{BufetovMucciconiPetrov2018}, followed by the vertical-edge gauge
$(t;t)_{i_2}(s^2;t)_{i_1}/\bigl[(t;t)_{i_1}(s^2;t)_{i_2}\bigr]$. 
The fused $J$-dependent weights one has to go through
take the following $q$-hypergeometric form 
\cite{Mangazeev2014},
\cite{Borodin2014vertex},
\cite{CorwinPetrov2015}:
\begin{multline*}
  w^{(J)}_{x,s}(i_1,j_1;i_2,j_2)=
	\mathbf{1}_{i_1+j_1=i_2+j_2}\,
  \frac{(-1)^{i_1+j_2}\,t^{\frac12 i_1(i_1+2j_1-1)}\,s^{\,j_2-i_1}\,x^{i_1}\,
  (t;t)_{j_1}\,(s^2;t)_{i_2}\,(x s^{-1};t)_{j_1-i_2}}
  {(t;t)_{j_2}\,(t;t)_{i_2}\,(s^2;t)_{i_1}\,(xs;t)_{i_1+j_1}}
  \\
  \times\;
  {}_{4}\bar\phi_3\!\left(\begin{matrix}
  t^{-i_1};\,t^{-i_2},\,t^{J}sx,\,t\,s x^{-1}\\
  s^2,\,t^{1+j_1-i_2},\,t^{1+J-i_1-j_1}\end{matrix}
  \;\middle|\; t,\,t\right).
\end{multline*}
Here ${}_{r+1}\bar\phi_r$ is the regularized terminating basic hypergeometric
series:
\begin{multline}
\label{eq:reg_bphi}
  {}_{r+1}\bar\phi_r\!\left(\begin{matrix}
  t^{-n};\,a_1,\dots,a_r\\b_1,\dots,b_r\end{matrix}
  \;\middle|\; t,\,z\right)
  =\sum_{k=0}^{n} z^k\,\frac{(t^{-n};t)_k}{(t;t)_k}
  \prod_{i=1}^{r}(a_i;t)_k\,(b_it^{k};t)_{n-k}
  \\
  =\prod_{i=1}^{r}(b_i;t)_n\cdot{}_{r+1}\phi_r\!\left(\begin{matrix}
  t^{-n},\,a_1,\dots,a_r\\b_1,\dots,b_r\end{matrix}
  \;\middle|\; t,\,z\right).
\end{multline}
The displayed spin-fused weight $w^{(J)}_{x,s}$ reduces to the spin Hall--Littlewood weight of
\Cref{rem:spin_hl} at $J=1$, and to \eqref{eq:fused_weight} in the limit
$s\to0$. The latter is a removable singularity and not a substitution: already
at $J=1$ and $(i_1,j_1;i_2,j_2)=(1,0;1,0)$ the factors $s^{\,j_2-i_1}$ and
$(xs^{-1};t)_{j_1-i_2}$ blow up and vanish separately, while the whole
expression equals $(1-stx)/(1-sx)$ and tends to the spinless value $1$. Here $s$ is
the parameter of the vertical (generic Verma) representation
of $U_t(\widehat{\mathfrak{sl}_2})$.
Setting 
$s^2=t^{-I}$
caps the vertical occupations at~$I$.
\end{remark}

\section{Deformed weights and particle systems}
\label{sec:def_rsk}

\subsection{The $q$-Whittaker case and $q$-PushTASEP}
\label{subsec:qw_rsk}

Replace the Schur weights of \Cref{sec:rsk} by their $q$-Whittaker deformation: the
up-right weight $W$ \eqref{eq:qw_weights} and the corresponding down-right dual
$W^{*}$ \eqref{eq:qw_weights_dual}. The YBE
\eqref{eq:ybe} persists, with a $q$-deformed cross $\mathbb R^{(q)}_{x,y}$ in place of
\eqref{eq:bbR}. The cross is the unique (up to scale)
solution of \eqref{eq:ybe}; it obeys the same arrow conservation $i_1+j_2=j_1+i_2$ as
\eqref{eq:bbR}, depends on $x,y$ only through the product $xy$, and degenerates to
\eqref{eq:bbR} as $q\to0$. Its entries include
\begin{equation}
  \begin{gathered}
  \mathbb R^{(q)}_{x,y}(0,0;m,m)=\frac{(xy)^{m}}{(q;q)_{m}},\qquad
  \mathbb R^{(q)}_{x,y}(m,m;0,0)=(q;q)_{m},\\[3pt]
  \mathbb R^{(q)}_{x,y}(1,1;1,1)=xy+q,\qquad
  \mathbb R^{(q)}_{x,y}(1,2;1,2)=\mathbb R^{(q)}_{x,y}(2,1;2,1)=xy(1+q)+q^{2},\\[3pt]
  \mathbb R^{(q)}_{x,y}(2,2;1,1)=(1-q^{2})\bigl(xy+q+q^{2}\bigr),\qquad
  \mathbb R^{(q)}_{x,y}(2,2;2,2)=(xy)^{2}+xy\,q(1+q)^{2}+q^{4},
  \end{gathered}
\end{equation}
and in general it is the regularized terminating ${}_{2}\bar\phi_{1}$ series
\begin{equation}
  \label{eq:qw_cross_full}
  \mathbb R^{(q)}_{x,y}(i_1,j_1;i_2,j_2)
  =\mathbf 1_{i_1+j_2=j_1+i_2}\,
  \frac{(q;q)_{j_1}}{(q;q)_{j_2}\,(q;q)_{i_2}}\,(xy)^{i_2}\,
  {}_{2}\bar\phi_{1}\!\left(\begin{matrix}q^{-i_2};\,q^{-i_1}\\ q^{\,1+j_2-i_2}\end{matrix}
  \;\middle|\;q,\,\frac{q^{\,i_1+j_2+1}}{xy}\right),
\end{equation}
where ${}_{2}\bar\phi_{1}$ is the regularized terminating series
\eqref{eq:reg_bphi}.
Written out, the $q$-deformed cross is the finite sum
\begin{multline}
  \label{eq:qw_cross_sum}
  \mathbb R^{(q)}_{x,y}(i_1,j_1;i_2,j_2)
  =\mathbf 1_{i_1+j_2=j_1+i_2}\,
  \frac{(q;q)_{j_1}}{(q;q)_{j_2}\,(q;q)_{i_2}}
  \\\times\sum_{k=0}^{i_2}
  \frac{(q^{-i_2};q)_{k}\,(q^{-i_1};q)_{k}}{(q;q)_{k}}\,
  (q^{\,1+j_2-i_2+k};q)_{i_2-k}\,q^{(i_1+j_2+1)k}\,(xy)^{i_2-k}.
\end{multline}

\begin{remark}
This $q$-deformation of the benign expression $(xy)^{\min(i_2,j_2)}$ in \eqref{eq:bbR} 
may appear contrived, but it is required to satisfy the YBE.
There also exists a further spin deformation, see
\cite{BufetovMucciconiPetrov2018}, \cite{MucciconiPetrov2020}.
\end{remark}

\begin{remark}
\label{rmk:row-sum}
The sum over the outputs
of the cross
is independent of $(i_1,j_1)$:
\begin{equation}
  \label{eq:qw_cross_rowsum}
  \sum_{i_2,j_2}\mathbb R^{(q)}_{x,y}(i_1,j_1;i_2,j_2)=\frac{1}{(xy;q)_\infty}
  \qquad\text{for every }(i_1,j_1).
\end{equation}
For $(i_1,j_1)=(0,0)$ this is the 
(infinite) $q$-binomial theorem
$$\sum_{m\ge0}\frac{(xy)^m}{(q;q)_m}=\frac{1}{(xy;q)_\infty}.$$
This identity is the 
$q$-Whittaker Cauchy kernel.
For $q=0$, it reduces to the geometric series $\sum_{a\ge0}(xy)^{a}=1/(1-xy)$
which appeared throughout the Schur case.
\end{remark}

Let us apply the bijectivization in the same setting as the one which produced the
RSK correspondence in \Cref{subsec:schur_yb}.

In the bulk (we assume $0<q<1$ and $x,y>0$ here; the summands are then
nonnegative, and zero-weight states are discarded as in
\Cref{sec:bijectivization}) the $q$-deformation turns the deterministic move of
\Cref{lemma:schur_yb_bijection} into a random coupling: $k_1,k_1'$ still range over the
equal-length intervals \eqref{eq:schur_yb_k1_bounds}, but the summands
\eqref{eq:schur_yb_summands} now carry $q$-binomial factors, so, for all but
special boundary occupations,\footnote{For example, at
$(i_1,i_2,i_3;j_1,j_2,j_3)=(0,0,1;0,0,1)$ both lists are $\{1,xy\}$ and the
matching stays deterministic.} only the two
\emph{sums} (and not the individual summands)
match, and so $\mathsf p^{\mathrm{fwd}}(k_1\to k_1')$ must spread the probabilistic mass.

For the instance of \Cref{fig:intro_ybe_instance}, with boundary occupations
$(i_1,i_2,i_3;j_1,j_2,j_3)=(3,2,4;3,4,2)$, each of the six summands
\eqref{eq:schur_yb_summands} deforms into $x^{4}y^{3}(1+q)(1+q^{2})$ times a
polynomial in $xy$ and $q$. Dividing out this common factor, the two sides of
\eqref{eq:ybe} read
\begin{multline}
\label{eq:qw_intro_instance}
\underbrace{(1+q^2+q^4)(1+q+q^2+q^3+q^4)}_{k_1=1}
+\underbrace{(q^2+xy)(1+q+q^2)(1+q+q^2+q^3+q^4)}_{k_1=2}\\
+\underbrace{(xy)^2(1+q+q^2)+q^2\,xy\,(1+q)(1+q+q^2)+q^6}_{k_1=3}\\
=\underbrace{(1+q^2)(1+q+q^2+q^3+q^4)}_{k_1'=0}
+\underbrace{(1+q^2)(1+q+q^2)\bigl(q^2(1+q+q^2)+xy\,(1+q)\bigr)}_{k_1'=1}\\
+\underbrace{(1+q+q^2)\bigl((xy)^2+q^2\,xy\,(1+q+q^2)+q^6\bigr)}_{k_1'=2}\,,
\end{multline}
where the left summands are indexed by $k_1$ as in \eqref{eq:schur_yb_k1_bounds},
with $\varkappa_1=7-k_1$ and $\nu_2=4+k_1'$ in the notation of
\Cref{subsec:intro_worked_example}. At $q=0$, the display collapses to
$1+xy+(xy)^{2}$ on both sides, which is \eqref{eq:intro_two_sides} stripped of
$x^{4}y^{3}$. For $0<q<1$ no term-by-term matching survives: the $k_1=1$ and
$k_1'=0$ summands are the only ones free of $xy$, and they are different
polynomials in $q$. At $q=\frac12$ and $x=y=\frac12$ the six summands become
proportional to $651+434+74=620+455+84$, with no value shared between the two
sides, and the independent coupling ---
with $\mathsf p^{\mathrm{fwd}}(k_1\to k_1')$ proportional to the $k_1'$-th
summand, for every $k_1$ --- has
$\mathsf p^{\mathrm{fwd}}=\frac{620}{1159},\,\frac{455}{1159},\,\frac{84}{1159}$ for
$k_1'=0,1,2$, respectively. (At $xy=1$ the two sides accidentally coincide as
multisets, $651+1085+620=620+1085+651$; an equality of values at a single
point does not produce a matching of the polynomial summands.)

However, at the leftmost column the bijectivization becomes unique after summing
the left-hand side of \eqref{eq:ybe} over the internal occupations. There the
cross pushes its arrows into the system through the reservoir of infinitely many
vertical paths, see
\Cref{rmk:infinite_boundary_at_column_0} and \Cref{fig:crosses_left}. The
vertex weights with infinitely many vertical arrows
do not depend on the incoming horizontal occupation:
$W(\infty,j_1;\infty,j_2)=x^{j_2}/(q;q)_{j_2}$, the $q$-deformation of
\Cref{rmk:infinite_boundary_at_column_0}.
Hence, in the YBE with $i_1,i_2$ fixed, we may sum
over the internal occupations $k_1,k_2$ in the left-hand side of \eqref{eq:ybe}, and by
\eqref{eq:qw_cross_rowsum} that side collapses to a \emph{single} term:
\begin{equation}
  \label{eq:qw_singleton_identity}
  \frac{1}{(xy;q)_\infty}\,
  \frac{x^{j_2}}{(q;q)_{j_2}}\,\frac{y^{j_1}}{(q;q)_{j_1}}
  \;=\;
  \sum_{k_1',k_2'}
  \frac{x^{k_2'}}{(q;q)_{k_2'}}\,\frac{y^{k_1'}}{(q;q)_{k_1'}}\,
  \mathbb R^{(q)}_{x,y}(k_2',k_1';j_2,j_1).
\end{equation}
Note that both sides are independent of the fixed $i_1,i_2$.
A bijectivization of an identity with a singleton on one side is unique
(\Cref{sec:bijectivization}). Under it, 
the pair $(k_1',k_2')$ is drawn
with probability proportional to its weight in the right-hand side of
\eqref{eq:qw_singleton_identity}. 

Let us use the 
symmetry
\begin{equation*}
  \frac{x^{k_2'}\,y^{k_1'}}{(q;q)_{k_2'}\,(q;q)_{k_1'}}\,
  \mathbb R^{(q)}_{x,y}(k_2',k_1';j_2,j_1)
  =\frac{x^{j_2}\,y^{j_1}}{(q;q)_{j_2}\,(q;q)_{j_1}}\,
  \mathbb R^{(q)}_{x,y}(j_2,j_1;k_2',k_1')
\end{equation*}
of the $q$-deformed cross, and denote
these probabilities to pick $(k_1',k_2')$ by 
\begin{equation}
  \label{eq:qw_stochasticization}
  \mathbb L_{x,y}(i_1,j_1;i_2,j_2)
  \coloneqq\frac{\mathbb R^{(q)}_{x,y}(i_1,j_1;i_2,j_2)}
        {\sum_{i_2',j_2'}\mathbb R^{(q)}_{x,y}(i_1,j_1;i_2',j_2')}
  =(xy;q)_\infty\,\mathbb R^{(q)}_{x,y}(i_1,j_1;i_2,j_2).
\end{equation}
In the regime $0\le q<1$, $x,y\ge0$ with $xy<1$ of \Cref{subsec:conventions}, we have
$\mathbb L_{x,y}\ge0$ and $\sum_{i_2,j_2}\mathbb L_{x,y}(i_1,j_1;i_2,j_2)=1$.
Both facts are established in the Appendix of
\cite{BufetovMucciconiPetrov2018} for the more general spin weights: the
constant row sum is \cite[Proposition~A.5]{BufetovMucciconiPetrov2018}, and
nonnegativity is \cite[Proposition~A.8]{BufetovMucciconiPetrov2018}, proved
there for $q\in(0,1)$ and spin parameter $s\in[-\sqrt q,0)$; the $s=0$ weights
\eqref{eq:qw_cross_sum} are covered by the limit $s\to0^-$. Proposition~A.8
also assumes its spectral parameters lie in $[-s,-s^{-1}]$; for fixed $x,y>0$
this holds for all $s<0$ close enough to $0$. The boundary cases $q=0$, $x=0$,
or $y=0$ follow by continuity, or directly from the $q=0$ Schur weights.

\medskip

Let us explain a connection to particle systems.
Applied across the lattice, the moves grow a field $\{\lambda^{(m,n)}\}$
of Young diagrams on the quadrant $\mathbb Z_{\ge0}^2$ --- a $q$-Whittaker
Yang--Baxter field ---
whose \emph{first parts} $\lambda_1^{(m,n)}$ encode the particles of the (discrete-time)
\emph{geometric $q$-PushTASEP} of \cite[\S6.3]{MatveevPetrov2014} (the $s=0$ case
of the $q$-Hahn PushTASEP of \cite[\S3.2.1]{CMP_qHahn_Push}). Concretely: set
$\lambda^{(m,0)}=\lambda^{(0,n)}=\varnothing$ on the axes, fix an admissible
forward kernel for every bulk instance once and for all, and apply these
kernels cell by cell in increasing $m+n$, following
\cite{BufetovMucciconiPetrov2018}. The bulk coupling is not
unique --- any bijectivization of the bulk instances of \eqref{eq:ybe} may be
used; the resulting field depends on this choice, while the marginals used
below do not \cite{BufetovPetrovYB2017}, \cite{BufetovMucciconiPetrov2018}.
Along any down-right cut (a lattice path in $\mathbb Z_{\ge0}^{2}$ made of unit
steps to the right and down) the field has the law of the $q$-Whittaker process
(the $t=0$ Macdonald process) of \cite{BorodinCorwin2011Macdonald} --- whence
the name --- independently of the bulk coupling. The four
diagrams at the corners of a unit cell,
\begin{equation*}
  \varkappa=\lambda^{(m,n)},\quad
  \mu=\lambda^{(m+1,n)},\quad
  \lambda=\lambda^{(m,n+1)},\quad
  \nu=\lambda^{(m+1,n+1)},
\end{equation*}
obey the local rule which randomly picks $\nu$
based on $\mu,\varkappa,\lambda$ and an integer $k$
(the analogue of the RSK input $k_{ab}$ from the proof of \Cref{prop:train_is_rsk}).
The input $k$ is now random: $q$-geometric with parameter $x_{n+1}y_{m+1}$,
that is, $\mathbb P(k)=(x_{n+1}y_{m+1};q)_\infty\,(x_{n+1}y_{m+1})^k/(q;q)_k$,
independently over the cells \cite[\S6.2]{MatveevPetrov2014}. The kernel
$\mathbb L$ enforces the mandatory lower bound
$\nu_1-\lambda_1\ge\max(0,\mu_1-\lambda_1)$, but the excess over it is not in
general the independent input $k$: later particles also receive a random push.
The jump is bare $q$-geometric, as in \eqref{eq:qw_first_particle}, whenever
the preceding particle does not move, which is automatic for the leading
particle. At $q=0$ the inputs reduce to the geometric
matrix entries feeding RSK.
Thanks to the unique bijectivization at the zeroth column, the evolution of the
first parts of the partitions is \emph{marginally Markovian}, governed by the
stochasticized cross $\mathbb L_{x_{n+1},y_{m+1}}(i_1,j_1;i_2,j_2)$ with
\begin{equation}
  \label{eq:qw_L_firstparts}
  (i_1,j_1)=(\lambda_1-\varkappa_1,\ \mu_1-\varkappa_1),\qquad
  (i_2,j_2)=(\nu_1-\mu_1,\ \nu_1-\lambda_1),\qquad
  i_1+j_2=j_1+i_2=\nu_1-\varkappa_1.
\end{equation}

The first coordinate of the field plays the role of time. The update
\eqref{eq:qw_L_firstparts} involves only the first parts, so the array
$\bigl(\lambda_1^{(m,n)}\bigr)_{n\ge1}$ evolves as a Markov chain in $m$, which we
describe as a particle system. Set
\begin{equation}
  \label{eq:qw_particles}
  P_n(m):=\lambda_1^{(m,n)}+n,\qquad n\ge1,\qquad P_0\equiv0.
\end{equation}
By the interlacing, $P_0<P_1(m)<P_2(m)<\cdots$, and the particles move to the right,
pushing one another. One step $m\to m+1$ updates them in order $n=1,2,\ldots$: given
the move $\ell$ of $P_{n-1}$ and the gap $g=P_n(m)-P_{n-1}(m)-1$, the particle
$P_n$ moves right by $L$ with probability
$\mathbb L_{x_n,y_{m+1}}(g,\ell;\,g+L-\ell,\,L)$.
Indeed, the four counts inside $\mathbb{L}_{x_n,y_{m+1}}$ in the latter expression can be matched to
\eqref{eq:qw_L_firstparts} for the cell with the lower left corner
$\varkappa=\lambda^{(m,n-1)}$, with the outgoing pair $(g+L-\ell,L)$ recording the new
gap and the jump. In particular, the leading particle $P_1$ receives ``push''
$\ell=0$ from the frozen
$P_0$ and jumps $q$-geometrically,
\begin{equation}
  \label{eq:qw_first_particle}
  \mathbb L_{x_1,y_{m+1}}(g,0;\,g+L,\,L)
  =(x_1y_{m+1};q)_\infty\,\frac{(x_1y_{m+1})^{L}}{(q;q)_{L}},
\end{equation}
independently of the gap $g$. The dynamics preserves the order:
$\mathbb L_{x_n,y_{m+1}}=0$ unless $L\ge\ell-g$, so $\ell>g$ forces a push.

\medskip

A continuous-time limit of the $q$-PushTASEP 
(set all $y_m=\varepsilon$ and observe the chain after
$\lfloor\tau/\varepsilon\rfloor$ steps at continuous time $\tau$, as
$\varepsilon\to0$)
produces a simpler dynamics introduced in
\cite{BorodinPetrov2013NN} and further studied in \cite{CorwinPetrov2013}.
Under the continuous-time $q$-PushTASEP,
each particle $n$ carries an independent exponential clock with rate
$x_n/(1-q)$; when it rings the
particle steps forward by one and, with
probability $q^{g}$ (with $g$ the gap to the next particle ahead before the
step), instantaneously pushes that neighbor forward by one as well, the push
cascading down the line. At $q=0$
the push probability is $1$ across a zero gap and $0$ otherwise, so a jump shoves the
whole block of adjacent particles ahead by one. That is, for $q=0$
we get back the classical PushTASEP.

\subsection{Hall--Littlewood: a row-indexed RSK and the six-vertex model}
\label{subsec:hl_rsk}

The Hall--Littlewood weights \eqref{eq:hl_weights} --- the $t$-deformation of the
higher-spin model of \Cref{rem:higher_spin} --- carry a randomized bijectivization
parallel to the $q$-Whittaker one of \Cref{subsec:qw_rsk}. 
The \emph{horizontal} edges 
in the Hall--Littlewood case
have
capacity one ($j_1,j_2\in\{0,1\}$), while the vertical edges stay unbounded
($i_1,i_2\in\mathbb Z_{\ge0}$) and record multiplicities of the \emph{row}
lengths of the partitions, rather than of their column lengths as in
\Cref{subsec:qw_rsk} (cf.~\Cref{rmk:conjugate_coordinates}). In this sense the
insertion is indexed by rows, and, as we now explain,
its scalar marginal is the \emph{stochastic six-vertex model} rather than the
$q$-PushTASEP.

\medskip

Recall the vertex weights entering the Hall--Littlewood Cauchy identity
\eqref{eq:hl_cauchy}. The up-right weight is \eqref{eq:hl_weights}, in formulas
\begin{equation}
  \label{eq:hl_W_formula}
  W(i_1,j_1;i_2,j_2)=\mathbf 1_{i_1+j_1=i_2+j_2}\times
  \begin{cases}
    1,& (j_1,j_2)=(0,0),\\
    x, & (j_1,j_2)=(1,1),\\
    1,& (j_1,j_2)=(1,0),\\
    x\,(1-t^{\,i_1}),& (j_1,j_2)=(0,1),
  \end{cases}
\end{equation}
whose $N$-row partition function is $P_\lambda(x_1,\dots,x_N;0,t)$
(\Cref{prop:hl}). The $y$-side of \eqref{eq:hl_cauchy} carries the functions
$Q_\lambda$, whose branching rule differs from that of $P_\lambda$: the factor
$1-t^{m}$ is attached to a path \emph{entering} a column whose multiplicity
becomes $m$, rather than leaving a column of multiplicity $m$
\cite[Ch.~III, \S5]{Macdonald1995}. Accordingly, the
down-right weight (the analogue of the reflection \eqref{eq:downright_cross},
with the conservation law $i_2+j_1=i_1+j_2$) is
\begin{equation}
  \label{eq:hl_Wstar_formula}
  W^{*}(i_1,j_1;i_2,j_2)=\mathbf 1_{i_2+j_1=i_1+j_2}\times
  \begin{cases}
    1,& (j_1,j_2)=(0,0),\\
    y, & (j_1,j_2)=(1,1),\\
    1-t^{\,i_1},& (j_1,j_2)=(1,0),\\
    y,& (j_1,j_2)=(0,1).
  \end{cases}
\end{equation}
Gluing $N$ rows of the up-right model and $M$ rows of the down-right model along a
common boundary $\lambda$, as in
\Cref{fig:cauchy_model}, produces the vertex model for the left-hand side of
the Cauchy identity \eqref{eq:hl_cauchy}.

\medskip

The YBE
\eqref{eq:ybe} with the weights
\eqref{eq:hl_W_formula}--\eqref{eq:hl_Wstar_formula} has the unique (up to
scale) solution
\begin{equation}
  \label{eq:hl_cross}
  \begin{gathered}
  \mathbb R^{(t)}_{x,y}(0,0;0,0)=1,\qquad
  \mathbb R^{(t)}_{x,y}(1,0;1,0)=\mathbb R^{(t)}_{x,y}(0,1;0,1)
  =\frac{1-txy}{1-xy},\qquad
  \mathbb R^{(t)}_{x,y}(1,1;1,1)=t,\\[3pt]
  \mathbb R^{(t)}_{x,y}(0,0;1,1)=\frac{xy\,(1-t)}{1-xy},\qquad
  \mathbb R^{(t)}_{x,y}(1,1;0,0)=\frac{1-t}{1-xy}.
  \end{gathered}
\end{equation}
Like the $q$-Whittaker cross \eqref{eq:qw_cross_full}, it depends on $x,y$
only through the product $xy$.

\begin{remark}
\label{rmk:hl_row_sum}
All row sums of the cross coincide (cf.\ \Cref{rmk:row-sum}):
\begin{equation}
  \label{eq:hl_cross_rowsum}
  \sum_{i_2,j_2\in\{0,1\}}\mathbb R^{(t)}_{x,y}(i_1,j_1;i_2,j_2)
  =\frac{1-txy}{1-xy}
  \qquad\text{for every }(i_1,j_1),
\end{equation}
which is the building block for the Hall--Littlewood Cauchy kernel. 
\end{remark}

Dragging the
$NM$ crosses $\mathbb R^{(t)}_{x_i,y_j}$, one for each of the $NM$ intersections of the
$N$ up-right lines with the $M$ down-right ones, through the lattice exactly as in
\Cref{subsec:ybe_cauchy} yields the Yang--Baxter proof of the Cauchy identity
\eqref{eq:hl_cauchy}.
This Yang--Baxter proof is essentially the same as the one in
\cite{wheeler2015refined},
\cite{Borodin2014vertex},
and can be traced back to the $q$-boson realization of the
Hall--Littlewood functions in \cite{tsilevich2006quantum}.

Stochasticize the cross as in \eqref{eq:qw_stochasticization},
\begin{equation}
  \label{eq:hl_stochasticization}
  \mathbb L^{(t)}_{x,y}(i_1,j_1;i_2,j_2)
  \coloneqq\frac{1-xy}{1-txy}\,\mathbb R^{(t)}_{x,y}(i_1,j_1;i_2,j_2),
\end{equation}
and abbreviate
\begin{equation}
  \label{eq:s6v_weights}
  b_1\coloneqq\frac{1-xy}{1-txy},\qquad
  b_2\coloneqq t\,\frac{1-xy}{1-txy}=t\,b_1,
  \qquad 0\le b_2\le b_1\le1.
\end{equation}
The six transition
probabilities become
\begin{equation}
  \label{eq:hl_L_entries}
  \begin{gathered}
  \mathbb L^{(t)}_{x,y}(1,0;1,0)=\mathbb L^{(t)}_{x,y}(0,1;0,1)=1,
	\\[2pt]
  \mathbb L^{(t)}_{x,y}(0,0;0,0)=b_1,\qquad
  \mathbb L^{(t)}_{x,y}(0,0;1,1)=1-b_1,
	\\[2pt]
  \mathbb L^{(t)}_{x,y}(1,1;1,1)=b_2,\qquad
  \mathbb L^{(t)}_{x,y}(1,1;0,0)=1-b_2.
  \end{gathered}
\end{equation}
\Cref{fig:hl_cross_s6v}, top row.

As in the $q$-Whittaker case, at the leftmost column the bijectivization is
unique. With the reservoir weights $W(\infty,j_1;\infty,j_2)=x^{j_2}$ and
$W^{*}(\infty,j_1;\infty,j_2)=y^{j_2}$
(\Cref{rmk:infinite_boundary_at_column_0}; the factors $1-t^{\infty}=1$
disappear), summing the YBE over the internal occupations
collapses its left-hand side to a single term, and the analogue of
\eqref{eq:qw_singleton_identity} reads
\begin{equation}
  \label{eq:hl_singleton_identity}
  \frac{1-txy}{1-xy}\; x^{j_2}\,y^{j_1}
  =\sum_{k_1',k_2'\in\{0,1\}}
  x^{k_2'}\,y^{k_1'}\;\mathbb R^{(t)}_{x,y}(k_2',k_1';j_2,j_1).
\end{equation}
The same symmetry of the cross as in the $q$-Whittaker case identifies the
resulting unique coupling with $\mathbb L^{(t)}_{x,y}$
\eqref{eq:hl_stochasticization}.

\medskip

Dragging the crosses through the Hall--Littlewood Cauchy lattice --- the
construction of \cite{BufetovPetrovYB2017} --- grows a field
$\{\lambda^{(m,n)}\}$ of partitions on the quadrant, with $\lambda^{(m,n)}$
distributed as the Hall--Littlewood measure
\begin{equation}
  \label{eq:hl_measure}
  \mathrm{Prob}(\lambda)\propto
  P_\lambda(x_1,\dots,x_n;0,t)\,Q_\lambda(y_1,\dots,y_m;0,t),
\end{equation}
the $t$-analogue of the Schur measure. Along any down-right cut the field is
the \emph{Hall--Littlewood process}, the $t$-deformation of the Schur process
\cite{okounkov2003correlation}. In the bulk the coupling is randomized (as in
\Cref{subsec:qw_rsk}), while at the leftmost column it is the unique
\eqref{eq:hl_singleton_identity}. Around a unit cell with corners
$\varkappa=\lambda^{(m,n)}$, $\mu=\lambda^{(m+1,n)}$,
$\lambda=\lambda^{(m,n+1)}$, $\nu=\lambda^{(m+1,n+1)}$, the stochasticized
cross $\mathbb L^{(t)}_{x_{n+1},y_{m+1}}$ governs the \emph{lengths} of the
partitions:
\begin{equation}
  \label{eq:hl_L_lengths}
  (i_1,j_1)=\bigl(\ell(\lambda)-\ell(\varkappa),\ \ell(\mu)-\ell(\varkappa)\bigr),
  \qquad
  (i_2,j_2)=\bigl(\ell(\nu)-\ell(\mu),\ \ell(\nu)-\ell(\lambda)\bigr),
\end{equation}
with $i_1+j_2=j_1+i_2=\ell(\nu)-\ell(\varkappa)$. This is the row-indexed
counterpart of \eqref{eq:qw_L_firstparts}, with the first parts $\lambda_1$
replaced by the lengths $\ell(\lambda)=\lambda_1'$
(cf.~\Cref{rmk:conjugate_coordinates}). In particular, the array
of lengths --- equivalently, of the numbers of zero parts
$m_0(\lambda^{(m,n)})=n-\ell(\lambda^{(m,n)})$ --- evolves 
in a
marginally
Markovian way.

\medskip

To recognize this marginal, recall the stochastic
six-vertex model
\cite{GwaSpohn1992}, \cite{BCG6V}.
The general six-vertex model on $\mathbb Z^{2}$ has six admissible
arrow configurations around a vertex, weighted
$a_1,a_2,b_1,b_2,c_1,c_2\ge0$ (see \Cref{fig:hl_cross_s6v}, bottom row, and
\cite{baxter2007exactly}). The weights are called \emph{stochastic}
\cite{GwaSpohn1992} if
\begin{equation}
  \label{eq:s6v_condition}
  a_1=a_2=1,\qquad b_1+c_1=b_2+c_2=1,\qquad b_1,b_2\in[0,1].
\end{equation}
Then the weight of each vertex is the conditional probability of its outgoing
arrows (through the top and the right) given the incoming ones (from below and
from the left), cf.\ \Cref{fig:stochastic_vertex}, and the model in the
quadrant $\mathbb Z_{\ge1}^{2}$ is sampled by a Markov chain, vertex by vertex
along the successive diagonals $\{m+n=\mathrm{const}\}$. Impose the
\emph{infinite domain wall} boundary conditions: a path enters from the left
at every row, and no paths enter through the bottom. The \emph{height
function} is
\begin{equation}
  \label{eq:s6v_height}
  H(m,n)\coloneqq
  \#\bigl\{\text{up-right paths passing strictly to the right of }(m,n)\bigr\},
  \qquad 0\le H(m,n)\le n,
\end{equation}
see \Cref{fig:s6v_height} for an illustration.

\begin{figure}[ht]
\centering
{\small
\begin{tabular}{cccccc}
\hcross{1}{0}{1}{0} & \hcross{0}{1}{0}{1} & \hcross{0}{0}{0}{0} &
\hcross{0}{0}{1}{1} & \hcross{1}{1}{1}{1} & \hcross{1}{1}{0}{0} \\[2pt]
\makebox[1.9cm]{\scriptsize$(1,0;1,0)$} &
\makebox[1.9cm]{\scriptsize$(0,1;0,1)$} &
\makebox[1.9cm]{\scriptsize$(0,0;0,0)$} &
\makebox[1.9cm]{\scriptsize$(0,0;1,1)$} &
\makebox[1.9cm]{\scriptsize$(1,1;1,1)$} &
\makebox[1.9cm]{\scriptsize$(1,1;0,0)$} \\[6pt]
\fvv{} &
\fvv{\draw[line width=1pt,->] (0,-0.6)--(0,0.6);
     \draw[line width=1pt,->] (-0.6,0)--(0.6,0);} &
\fvv{\draw[line width=1pt,->] (-0.6,0)--(0.6,0);} &
\fvv{\draw[line width=1pt,->] (-0.6,0)--(0,0)--(0,0.6);} &
\fvv{\draw[line width=1pt,->] (0,-0.6)--(0,0.6);} &
\fvv{\draw[line width=1pt,->] (0,-0.6)--(0,0)--(0.6,0);} \\[2pt]
$a_1$ & $a_2$ & $b_1$ & $c_1$ & $b_2$ & $c_2$ \\[9pt]
$1$ & $1$ & $b_1$ & $1-b_1$ & $b_2$ & $1-b_2$ \\
\end{tabular}}
\caption{Top row: the six states $(i_1,j_1;i_2,j_2)$ of the stochasticized
cross $\mathbb L^{(t)}_{x,y}$ \eqref{eq:hl_L_entries}
(\textcolor{upcol}{blue} = the $P$-line carrying $x$,
\textcolor{downcol}{red} = the $Q$-line carrying $y$; dotted = empty).
Bottom row: the corresponding vertices of the stochastic six-vertex model,
in the standard order $a_1,a_2,b_1,c_1,b_2,c_2$. The cross state
$(i_1,j_1;i_2,j_2)$ becomes the vertex whose (bottom, left; top, right)
occupations are $(j_1,1-i_1;j_2,1-i_2)$.}
\label{fig:hl_cross_s6v}
\end{figure}

\begin{figure}[ht]
\centering
\begin{tikzpicture}[scale=0.62,>={Stealth[length=1.6mm]},rounded corners=1.0pt,font=\small]
  \foreach \c in {1,...,6}{\draw[gridln] (\c,0.4)--(\c,4.6);}
  \foreach \r in {1,...,4}{\draw[gridln] (0.4,\r)--(6.6,\r);}
  \draw[uppath,->] (0.4,1)--(3,1)--(3,2)--(5,2)--(5,3)--(6.6,3);
  \draw[uppath,->] (0.4,2)--(2,2)--(2,3)--(4,3)--(4,4)--(6.6,4);
  \draw[uppath,->] (0.4,3)--(1,3)--(1,4.6);
  \draw[uppath,->] (0.4,4)--(2,4)--(2,4.6);
  \foreach \x/\y/\h in {1/1/1, 2/2/1, 2/3/2, 5/1/0, 1/4/3, 6/3/1}{
    \node[vtx] at (\x,\y) {};
    \node[fill=white,inner sep=1pt,font=\scriptsize,anchor=north west]
      at ({\x+0.06},{\y-0.09}) {$\h$};
  }
\end{tikzpicture}
\caption{A configuration of the stochastic six-vertex model in the quadrant
with the infinite domain wall boundary conditions, and several values of the
height function \eqref{eq:s6v_height}. Paths leaving the picture on the right
continue and eventually turn up.}
\label{fig:s6v_height}
\end{figure}

The next statement is proven in the same way as its $q$-Whittaker counterpart
from \Cref{subsec:qw_rsk}:
\begin{proposition}[{\cite{borodin2016stochastic_MM}, \cite{BorodinBufetovWheeler2016}, \cite{BufetovMatveev2017}, \cite{BufetovPetrovYB2017}}]
\label{prop:hl_s6v}
Under the Yang--Baxter field $\{\lambda^{(m,n)}\}$
constructed from the bijectivization of the Hall--Littlewood YBE,
the array
$m_0(\lambda^{(m,n)})$ evolves as the height function of the
stochastic six-vertex model in the quadrant with the infinite domain wall
boundary conditions, with $b_1,b_2$ given by
\eqref{eq:s6v_weights} under $xy\rightsquigarrow x_ny_m$ at the coordinate $(m,n)$.
\end{proposition}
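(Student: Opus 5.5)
The argument mirrors the $q$-Whittaker one of \Cref{subsec:qw_rsk}. It has two parts: marginal Markovianity of the lengths, then identification of the resulting local kernel with the stochastic six-vertex weights.

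\emph{Step 1: the length array is marginally Markov.} In the Hall--Littlewood model of \Cref{rem:higher_spin}, the vertical edge at column $0$ carries the multiplicity $m_0$ of zero parts. This is the infinite reservoir of \Cref{rmk:infinite_boundary_at_column_0}, on which the weights $W(\infty,j_1;\infty,j_2)=x^{j_2}$ and $W^*(\infty,j_1;\infty,j_2)=y^{j_2}$ do not depend on the incoming horizontal state. The horizontal edges leaving column $0$ carry $0/1$ and record whether a row changes the length. Across a unit cell they are exactly the differences in \eqref{eq:hl_L_lengths}.

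When the cross $\mathbb R^{(t)}_{x_{n+1},y_{m+1}}$ is dragged through column $0$, the left-hand side of the YBE collapses, after summing over internal occupations, to the single term \eqref{eq:hl_singleton_identity} by \Cref{rmk:hl_row_sum}. A bijectivization with a singleton side is unique (\Cref{sec:bijectivization}). Hence the outgoing pair is drawn from $\mathbb L^{(t)}$ of \eqref{eq:hl_stochasticization}, regardless of the bulk couplings chosen at later columns. Bulk moves occur to the right of column $0$ and never alter these edges, by locality of left-to-right dragging.

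So the lengths $\ell(\lambda^{(m,n)})$, equivalently $m_0=n-\ell$, are updated cell by cell using only $(\ell(\varkappa),\ell(\lambda),\ell(\mu))$ and fresh randomness. Processing cells in increasing $m+n$ makes this a Markov sampling. The boundary values are $\ell=0$ on the axes, so $m_0(\lambda^{(0,n)})=n$ and $m_0(\lambda^{(m,0)})=0$.

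\emph{Step 2: matching with the six-vertex model.} Apply the dictionary of \Cref{fig:hl_cross_s6v}: the cross state $(i_1,j_1;i_2,j_2)$ becomes the vertex with occupations $(j_1,1-i_1;j_2,1-i_2)$. We check that the entries \eqref{eq:hl_L_entries} become $a_1=a_2=1$, $b_1$, $c_1=1-b_1$, $b_2$, $c_2=1-b_2$, which satisfy \eqref{eq:s6v_condition}. Conservation $i_1+j_2=j_1+i_2$ is exactly six-vertex path conservation.

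Next set $H(m,n)=m_0(\lambda^{(m,n)})$. With \eqref{eq:hl_L_lengths}, $j$-type differences become vertical-edge occupations: $\ell(\mu)-\ell(\varkappa)=H(m,n)-H(m+1,n)$. The $i$-type differences become horizontal occupations: $1-(\ell(\lambda)-\ell(\varkappa))=H(m,n+1)-H(m,n)$. So differences of $H$ in the $n$ and $m$ directions are the occupations of horizontal and vertical edges, which is the defining property of the height function \eqref{eq:s6v_height}.

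Finally, the boundary values give the infinite domain wall conditions. $H(0,n)=n$ means a path enters every row from the left, and $H(m,0)=0$ means no paths enter from the bottom. The parameters at $(m,n)$ are $b_1,b_2$ of \eqref{eq:s6v_weights} with $xy=x_ny_m$.

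\emph{Main obstacle.} The delicate point is the sign and orientation bookkeeping in Step 2. This includes the complement $1-i$ in the dictionary and the direction in which $H$ counts paths "to the right". An off-by-one shift between cell $(m,n)$ and the vertex $(m+1,n+1)$ must also be fixed. I would check it on the first cell and on the $c_1$, $c_2$ entries before writing the general identification. Step 1 is essentially a reprise of \eqref{eq:qw_singleton_identity}.
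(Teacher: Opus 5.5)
Your proposal is correct and follows the paper's own route. The unique singleton bijectivization at the reservoir column makes the lengths marginally Markov with kernel $\mathbb L^{(t)}$, and the dictionary of \Cref{fig:hl_cross_s6v}, together with the boundary values $m_0(\lambda^{(0,n)})=n$ and $m_0(\lambda^{(m,0)})=0$, identifies this with the stochastic six-vertex height function; your bookkeeping (the cell with lower-left corner $(m,n)$ corresponds to the vertex $(m+1,n+1)$, and $H(M,N)=m_0(\lambda^{(M,N)})$) checks out. The one step you leave implicit is that the singleton coupling draws $(k_1',k_2')$ with probability proportional to $x^{k_2'}y^{k_1'}\mathbb R^{(t)}_{x,y}(k_2',k_1';j_2,j_1)$, and it is the symmetry $x^{k_2'}y^{k_1'}\mathbb R^{(t)}_{x,y}(k_2',k_1';j_2,j_1)=x^{j_2}y^{j_1}\mathbb R^{(t)}_{x,y}(j_2,j_1;k_2',k_1')$ (a direct check on the pair of entries $(0,0;1,1)$ and $(1,1;0,0)$) that turns this into $\mathbb L^{(t)}_{x,y}(j_2,j_1;k_2',k_1')$.
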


With homogeneous parameters $x_ny_m\equiv xy$, in the Poisson-type limit around
the diagonal ($xy\to1$, so that $b_1,b_2\to0$ with the ratio $b_2/b_1=t$ fixed
by \eqref{eq:s6v_weights}) the six-vertex model degenerates to the asymmetric
simple exclusion process (ASEP) with left and right jump rates $t$ and $1$
\cite{BCG6V}. Parameter convergence alone is not enough here: one observes
the particle configuration along the diagonal and accelerates the diagonal coordinate
by the order $b_1^{-1}$, so that jumps, which occur with probabilities of order $b_1$
and $b_2$, happen at rates $1$ and $t$ per unit of limiting time. See \cite{BCG6V} for
the precise scaling, and \cite{Aggarwal2017convergence} for the proof and the mode of
convergence.
This is the row-indexed counterpart of the $q$-PushTASEP
produced from columns in \Cref{subsec:qw_rsk}. At $t=0$ the Hall--Littlewood
weights \eqref{eq:hl_weights} become the Schur weights
(\Cref{rem:higher_spin}), and the height field reduces to the corresponding
Schur-measure observable. 
The particle system along the diagonal becomes the continuous-time TASEP.

\section{Further directions}
\label{sec:outlook}

Reading the Yang--Baxter equation as a \emph{probabilistic normalization} identity between two
random configurations, and not only an algebraic one, is fruitful: dragging a cross through a
lattice gave the Cauchy
identity (\Cref{subsec:ybe_cauchy}), RSK and its deformations (\Cref{sec:rsk,sec:def_rsk}),
and their particle-system marginals.
Here we collect further directions in which this reading
has been used.

\subsection{Stationary measures via the forgetting property}
\label{sec:colored}

For the stochastic vertex models of these notes, sending the number of arrows
carried by one line to
infinity turns that line into an inexhaustible reservoir that ``forgets'' how many arrows entered, so its
output is independent of its input --- a property of the specific weights, not
of stochasticity alone (we saw it first in \Cref{rmk:infinite_boundary_at_column_0}).
Then, the 
YBE
allows one to exchange this infinite line with another, finite stochastic line (encoding the evolution
of a particle system). Since the distribution before and after applying the stochastic line is the
same, such ``reservoir measures'' are \emph{stationary} for particle systems.
This approach was used to construct stationary measures
for the colored ASEP, the colored $q$-Boson process, and the
colored $q$-PushTASEP, on the ring and on the line (the latter through a
stationary vertex model in the quadrant) by
Aggarwal--Nicoletti--Petrov \cite{AggarwalNicolettiPetrov2023colored}.
For a single color the quadrant construction is due to Aggarwal \cite{Amol2016Stationary}.

\medskip

The reservoir weights are a lattice form of the multiline queues of
Ferrari--Martin \cite{Ferrari_2007} and Martin \cite{martin2020stationary},
and the underlying single-color Burke property appeared earlier for
directed polymers in O'Connell--Yor \cite{OConnellYor2001} and
Sepp\"al\"ainen \cite{Seppalainen2012}.
Stationary measures for multiclass particle systems also admit an algebraic description via
the Matrix Product Ansatz, which goes back to Derrida--Evans--Hakim--Pasquier
\cite{Derrida1993solution} and Derrida--Janowsky--Lebowitz--Speer \cite{Derrida1993shock},
and was extended to several species by Mallick--Mallick--Rajewsky
\cite{MallickMallickRajewsky1999} and Prolhac--Evans--Mallick \cite{Prolhac_2009};
see the survey of Blythe--Evans
\cite{BlytheEvansSolverGuide2007}. The queueing constructions of Angel \cite{angel2006stationary}
and Ferrari--Martin \cite{FerrariMartin2005}, \cite{ferrari2009multiclass} give a combinatorial
counterpart, and letting the number of classes grow leads to the TASEP speed process of
Amir--Angel--Valk\'o \cite{amir2011tasep}.
The stationary measures tie to
nonsymmetric Macdonald polynomials, as in Cantini--de Gier--Wheeler \cite{cantini2015matrix},
Corteel--Mandelshtam--Williams \cite{corteel2018multiline}, and
Borodin--Wheeler \cite{borodin2019nonsymmetric}, \cite{borodin_wheeler2018coloured};
and, for the colored $q$-Boson
process, to modified Macdonald polynomials, as in Ayyer--Mandelshtam--Martin
\cite{Ayyer_2023}, \cite{ayyer2022modified}.

\subsection{Symmetry in the spectral parameters}
\label{sec:parameter_symmetry}

The partition functions of \Cref{sec:schur_models,sec:qdef} build symmetric polynomials one row
at a time, feeding in the spectral parameters in a fixed order; that they come out symmetric is a
corollary of the YBE, since an $R$-cross (of a different form than the ones we
use here) dragged through two adjacent rows interchanges their parameters while fixing the
partition function. Bijectivizing this single swap turns the symmetry into a local Markov move on
semistandard Young tableaux of a fixed shape: the move preserves the partition function but
carries the tableau distribution for one order of the parameters to the one for the swapped
order. It is a probabilistic version of the Bender--Knuth involution \cite{bender1972enumeration}.

\medskip

Iterating this single swap up an infinite interlacing array drags the bottom parameter off to
infinity, and
homogeneity turns the residual rescaling into a shift $t\mapsto qt$ of the process time
(here $q$ is the ratio of the geometric family of TASEP speeds and $t$ is the
running time --- not the Macdonald parameters of \Cref{sec:qdef}). In the limit $q=e^{-\varepsilon}\to1$ with
$\lfloor\tau/\varepsilon\rfloor$ iterations of this drag, this becomes a
time-homogeneous \emph{backward} process:
Petrov--Saenz \cite{PetrovSaenz2019backTASEP} build a backward Hammersley-type process with
$\mu_t\,\mathbf L_\tau=\mu_{e^{-\tau}t}$, mapping the fixed-time law $\mu_t$ of TASEP
with the step initial condition to the law at an earlier time $e^{-\tau}t$.

\medskip

The $q$-Hahn TASEP goes back to Povolotsky \cite{Povolotsky2013} and
Corwin \cite{Corwin2014qmunu}; in its multiparameter version, due to
Borodin--Petrov \cite[\S6.6]{BorodinPetrov2016inhom}, each particle
$x_n$ (a position, not a spectral parameter) carries its own jump parameter
$\nu_n$ (coming from the vertex model spin parameter).
The parameter symmetry described next is from Petrov \cite{petrov2019qhahn}.
For $0<\nu_{n+1}<\nu_n<1$ the swap $\nu_n\leftrightarrow\nu_{n+1}$ is a $q$-deformed beta-binomial
redistribution of $x_n$, reproducing the process with the two parameters swapped at every fixed
time. The swap exists because, from the step initial configuration $x_n(0)=-n$,
transposing $\nu_n$ and $\nu_{n+1}$ leaves the joint law of all the other
particles unchanged.
Both statements need that initial configuration; for other deterministic
data the symmetry typically fails already after a single update.
The same elementary
swap degenerates to $q$-TASEP and to the directed beta polymer, where it becomes a
$\mathrm{Beta}$-splitting of two adjacent partition functions.
A random matrix shadow of this probabilistic symmetry was explored in
Petrov--Tikhonov \cite{PetrovTikhonov2019}.

\medskip

The symmetry lifts from fixed-time marginals to whole trajectories.
Indeed,
Petrov--Saenz \cite{petrov2022rewriting}
constructed
``rewriting
history''
Markov operators on the space of trajectories of a particle system such as TASEP.
These operators act by
resampling a particle's entire past trajectory while carrying the joint law onto
that of the parameter-swapped process. Dragging to infinity gives the shift intertwiner,
whose continuous limit is a Lax
equation $t\,\dot{\mathsf T}=[\mathsf B,\mathsf T]$
for the TASEP semigroup $\mathsf T=\mathsf T(t)$, with $\mathsf B$ the
generator of the backward process. This last step is formal:
\cite{petrov2022rewriting} exhibits no common invariant domain on which
$\mathsf B\mathsf T$ and $\mathsf T\mathsf B$ are both defined.
The Lax equation has the same form for
$q$-TASEP and TASEP, and differs only in the form of the generator $\mathsf B$
of the backward process.

\subsection{Shift-invariance and hidden symmetries}
\label{sec:shift_invariance}

The transposition of spectral parameters also surfaces in identities between \emph{distant} observables,
though the phenomenon is less the swap itself than a hidden symmetry that the swap merely records.
Borodin--Gorin--Wheeler \cite{borodin2019shift} prove (under certain admissibility conditions)
that the joint law of colored height
functions of the inhomogeneous colored stochastic six-vertex model is invariant under moving
one observation point up by one lattice step and raising its color cutoff by one, at the cost of
transposing two row spectral parameters.
The identity is nonlocal, as the swapped rows may be
arbitrarily far apart. The proof is analytic, by Lagrange interpolation in the spectral
parameters at the nodes where a row parameter equals a column parameter and the
$R$-matrix degenerates to a permutation.
Fusion, analytic continuation in the spin, and scaling limits
push the invariance down to the Beta, Gamma, and O'Connell--Yor polymers, the KPZ equation,
and the Airy sheet. The degenerate specialization of the vertex model identity is
the color--position symmetry of
Borodin--Wheeler \cite{borodin_wheeler2018coloured} and
Borodin--Bufetov \cite{BorodinBufetov2021ColorPosition}.

\medskip

Independently, Dauvergne \cite{dauvergne2020hidden} settled a conjecture of
\cite{borodin2019shift}
for last passage percolation and directed polymers, by a route close to the present work:
the RSK and geometric RSK correspondences together with a decoupling property.
The cleanest formulation, due to Galashin \cite{galashin2020symmetries}, is a single \emph{flip}
invariance of the partition function (a $180^\circ$ rotation of the left-to-right boundary
connections, with the bottom-to-top ones fixed and the row spectral parameters
reversed), of which the shift is a composition of two flips; the proof
is algebraic, through the Yang--Baxter basis of the Hecke algebra. See also
Bufetov \cite{bufetov2020interacting} and Zhang \cite{Zhang2023}.
The
two sides of a flip carry \emph{equal numbers} of configurations, yet the weights match only in
aggregate. It would be interesting to find useful couplings of the two sides, 
as we have done for the Yang--Baxter equation and Cauchy identities in this work.

\subsection{Markov duality}
\label{sec:duality}

Markov duality extends the integrable structure to the \emph{dynamics} of observables
of particle systems coming from stochastic vertex models. We use the
quantum-group vocabulary below --- co-product, universal $R$-matrix --- only as
pointers to the literature, without definitions.
The
symmetry at work is now the co-product rather than the spectral parameter: the stochastic Markov generator
of a particle system (for discrete-time chains, the transition operator)
commutes with the $U_q$-action; combined with a reversible measure or an
explicit conjugation, this symmetry produces a \emph{duality function}
$D(\eta,\xi)$ intertwining the generator
with the transpose of a dual generator,
\begin{equation*}
  \mathbb E_\eta\bigl[D(\eta_t,\xi)\bigr]=\mathbb E_\xi\bigl[D(\eta,\xi_t)\bigr].
\end{equation*}
Here $\xi_t$ is the dual Markov process, with finitely many particles in the
examples below.
Much of the hierarchy of integrable particle systems (ASEP, after
Sch\"utz \cite{schutz1997dualityASEP} and
Borodin--Corwin--Sasamoto \cite{BorodinCorwinSasamoto2012}, with the symmetric case
going back to Spitzer \cite{Spitzer1970} (see also
Liggett \cite[Ch.~8, Thm.~1.1]{Liggett1985});
the stochastic six-vertex model, after Corwin--Petrov \cite{CorwinPetrov2015}
and Lin \cite{YierLin2019_6v_duality}; $q$-TASEP \cite{BorodinCorwinSasamoto2012}; the higher-spin vertex
models, after Corwin--Petrov \cite{CorwinPetrov2015}; and their dynamic deformations)
admits dualities traceable to the same quantum group structure.
Namely, $D$
can be built as a
matrix element of the universal $R$-matrix: a row of vertex weights for
an auxiliary line adjoined to the lattice, carried out for the dynamic higher-spin models
via the Drinfeld twister of $U_q(\mathfrak{sl}_2)$ in Kuan--Zhou \cite{KuanZhou2023}.

\medskip

Two regimes should be distinguished. \emph{Triangular} dualities build $D$ from raising operators on
the reversible measure, as in \cite{schutz1997dualityASEP},
Giardin\`a--Kurchan--Redig--Vafayi \cite{giardina2009duality},
Belitsky--Sch\"utz \cite{belitsky2016self}, and Kuan \cite{Kuan2018}; for ASEP and $q$-TASEP
they collapse a $k$-point $q$-moment of the height function to a closed evolution
of $k$ dual particles, solved by nested contour integrals (the engine behind the
exact one-point laws). They return moments rather than the law, so
recovering a distribution still needs a moment problem, ill-posed in the polymer limits.
\emph{Orthogonal-polynomial} dualities instead take $D$ orthogonal with
respect to the reversible measure, as in Groenevelt \cite{Groenevelt2019},
Carinci--Franceschini--Giardin\`a--Groenevelt--Redig
\cite{CarinciFranceschiniGiardinaGroeneveltRedig2019},
Carinci--Franceschini--Groenevelt \cite{CarinciFranceschiniGroenevelt2021},
Zhou \cite{Zhou2021}, and Franceschini--Kuan--Zhou \cite{Franceschini2024};
the orthogonality gives direct probabilistic access --- Boltzmann--Gibbs principles, after
Ayala--Carinci--Redig
\cite{AyalaCarinciRedig2018}, \cite{AyalaCarinciRedig2021}, and correlation identities
\cite{CarinciFranceschiniGroenevelt2021} --- without
routing through moments.

\bibliographystyle{alpha}
\bibliography{bib}

\medskip

\textsc{l. petrov, university of virginia, charlottesville, va, usa}

e-mail: \texttt{lenia.petrov@gmail.com}

\end{document}